\newtheorem{theorem}{Theorem}[section]
\newtheorem{definition}[theorem]{Definition} 
\newtheorem{proposition}[theorem]{Proposition}
\newtheorem{lemma}[theorem]{Lemma}
\newtheorem{remark}[theorem]{Remark}
\newtheorem{notations}[theorem]{Notation}
\numberwithin{equation}{section}
\newcommand{\Enorm}[1]{\|#1\|\raisebox{-0.5ex}{\scalebox{0.5}{$E$}}}
\newcommand{\R}{\mathbb{R}}
\newcommand{\C}{\mathbb{C}}
\newcommand{\Z}{\mathbb{Z}}
\newcommand{\N}{\mathbb{N}}
\newcommand{\sphere}{\mathbb{S}}
\newcommand{\T}{\mathbb{T}}
\newcommand{\F}{\mathcal{F}}
\newcommand{\E}{\mathcal{E}}
\newcommand{\CC}{\mathcal{C}}
\newcommand{\B}{\mathcal{B}}
\newcommand{\set}{\mathcal{S}}
\newcommand{\tD}{\tilde D}
\newcommand{\bw}{\bar \omega}
\newcommand{\bW}{\bar W}
\newcommand{\bO}{\bar \Omega}
\newcommand{\bu}{\bar u}
\newcommand{\rednorm}[1]{\|#1\|_{\XXred}} 
\newcommand{\norm}[1]{\|#1\|_{\XX}}
\newcommand{\acts}{.}
\newcommand{\X}{X}
\newcommand{\XX}{\mathcal{X}}
\newcommand{\XXred}{\mathcal{X}^{\text{\textup{red}}}}
\newcommand{\ered}{\text{\textup{red}}}
\newcommand{\XXsym}{\mathcal{X}^{\text{\textup{sym}}}}
\newcommand{\Xsym}{\X^{\text{\textup{sym}}}}
\newcommand{\Xred}{\X^{\text{\textup{red}}}}
\newcommand{\J}{J}
\newcommand{\JJred}{\mathcal{J}^{\text{\textup{red}}}}
\newcommand{\Jdom}{\J^{\text{\textup{dom}}}}
\newcommand{\Jtriv}{\J^{\text{\textup{triv}}}}
\newcommand{\Jsym}{\J^{\text{\textup{sym}}}}
\newcommand{\Jred}{\J^{\text{\textup{red}}}}
\newcommand{\Jdag}{\J^{\dagger}}
\newcommand{\Cred}{C^{\text{\textup{red}}}}
\renewcommand{\j}{{j}}
\renewcommand{\c}{\omega}
\renewcommand{\b}{\phi}
\newcommand{\bb}{\bar{\varphi}}
\newcommand{\tc}{\widetilde{\omega}}
\newcommand{\talpha}{\widetilde{\alpha}}
\newcommand{\tg}{\widetilde{g}}
\newcommand{\Fred}{F^{\text{\textup{red}}}}
\newcommand{\Psired}{\Psi^{\text{\textup{red}}}}
\newcommand{\FFred}{\mathcal{F}^{\text{\textup{red}}}}
\newcommand{\e}{\mathrm{e}}
\newcommand{\weight}{\xi}
\newcommand{\symweight}{\xi^\text{\textup{s}}}
\newcommand{\tn}{\tilde{n}}
\newcommand{\bydef}{\stackrel{\mbox{\tiny\textnormal{\raisebox{0ex}[0ex][0ex]{def}}}}{=}}
\newcommand{\ta}{\tilde{a}}
\newcommand{\tC}{\tilde{C}}
\newcommand{\hG}{\widehat{G}}
\newcommand{\tgamma}{\widetilde{\gamma}}
\newcommand{\RR}{\mathcal{R}}
\newcommand{\Av}{\mathcal{A}}
\newcommand{\phase}{\leftmoon}
\renewcommand{\div}{\text{\textup{div}}}
\newcommand{\sol}{\text{\textup{sol}}}
\newcommand{\NN}[1]{\mathcal{N}_{#1}} 
\newcommand{\rhs}{f^\omega}
\newcommand{\approximate}{\text{\textup{approx}}}
\newcommand{\Adag}{\widehat{A}}
\newcommand{\Ared}{A_{\text{\textup{red}}}}
\newcommand{\Bred}{B_{\text{\textup{red}}}}
\newcommand{\Tred}{T_{\text{\textup{red}}}}
\newcommand{\Areddag}{\widehat{A}_{\text{\textup{red}}}}
\newcommand{\tildeset}{\widetilde{\set}}
\newcommand{\NNred}[1]{\mathcal{N}_{#1}^{\text{\textup{red}}}} 
\newcommand{\Scode}{\texttt{S}}
\begin{document}

\title{
Spontaneous periodic orbits in the Navier-Stokes flow
}

\author{
Jan Bouwe van den Berg 
\thanks{
Department of Mathematics, 
VU Amsterdam, 
1081 HV Amsterdam, 
The Netherlands, {\tt janbouwe@few.vu.nl};
partially supported by NWO-VICI grant 639033109.
}
\and 
Maxime Breden 
\thanks{
Faculty of Mathematics, 
Technical University of Munich, 
85748 Garching bei M\"unchen, 
Germany, {\tt maxime.breden@tum.de}; partially supported by a Lichtenberg Professorship grant of the VolkswagenStiftung awarded to C. Kuehn.
}
\and
Jean-Philippe Lessard 
\thanks{
Department of Mathematics and Statistics, 
McGill University, 
805 Sherbrooke St W, 
Montreal, QC, H3A 0B9, 
Canada, {\tt jp.lessard@mcgill.ca}; supported by NSERC. 
}
\and 
Lennaert van Veen
\thanks{
Faculty of Science, 
University of Ontario Institute of Technology, 
Oshawa, ON L1H 7K4, 
Canada, {\tt lennaert.vanveen@uoit.ca}; supported by NSERC.
}
}


\maketitle

\vspace{-.5cm}

\begin{abstract}
In this paper, a general method to obtain constructive proofs of existence of periodic orbits in the forced autonomous Navier-Stokes equations on the three-torus is proposed. After introducing a zero finding problem posed on a Banach space of geometrically decaying Fourier coefficients, a Newton-Kantorovich theorem is applied to obtain the (computer-assisted) proofs of existence. The required analytic estimates to verify the contractibility of the operator are presented in full generality and symmetries from the model are used to reduce the size of the problem to be solved. As applications, we present proofs of existence of spontaneous periodic orbits in the Navier-Stokes equations  with Taylor-Green forcing.
\end{abstract}

\begin{center}
{\bf \small Keywords} \\ \vspace{.05cm}
{ \small Navier-Stokes equations $\cdot$ periodic orbits $\cdot$ symmetry breaking $\cdot$ computer-assisted proofs }
\end{center}

\begin{center}
{\bf \small Mathematics Subject Classification (2010)}  \\ \vspace{.05cm}
{\small 35Q30 $\cdot$ 35B06 $\cdot$ 35B10 $\cdot$ 35B36 $\cdot$ 65G20 $\cdot$ 76D17 } 
\end{center}


\section{Introduction}
\label{s:introduction}

The Navier-Stokes equations for a fluid of constant density $\rho$ can be expressed as
\begin{equation}
\label{eq:NS}
\left\{
\begin{aligned}
\partial_t u + (u\cdot \nabla)u - \nu\Delta u + \nabla p &= f \\
\nabla \cdot u &= 0 ,
\end{aligned}
\right.
\end{equation}
where $u=u(x,t)$ is the velocity, $p(x,t)=P(x,t)/\rho$ is the pressure scaled by the density, $\nu$ is the kinematic viscosity and $f=f(x,t)$ is an external forcing term. 
These equations can be considered on compact or unbounded domains, complemented by boundary and initial conditions. The first equation, which expresses momentum balance, has a quadratically nonlinear advection term. While the presence of the nonlinearity generally obstructs obtaining closed form solutions, there are some notable exceptions.
In parallel shear flows, the advection term vanishes identically and analytic solutions are available. Examples include Hagen-Poiseuille flow in pipes~\cite{Sutera1993} and Taylor-Couette flow between co-axial cylinders \cite{Taylor1923}. In Beltrami flows, the nonlinearity takes the form of a gradient and can be absorbed in the pressure. An example of an explicit solution with this property is the ABC flow \cite{Dombre1986}.
Explicit 
solutions with non-trivial nonlinearities are rare, but some are known. For instance, exact vortical solutions include Burgers' vortex on~$\mathbb{R}^3$ \cite{Burgers1948} and the periodic vortex array of Taylor and Green \cite{Taylor1937}. However, it has been known for a long time that, even in fluids with strong viscous damping, more complicated, time-periodic motions can occur. A classical experiment is that of a fluid flowing past a stationary cylinder. Experiments started by von K\'arm\'an at the beginning of the twentieth century, and carried on by his students, showed that, at a well-defined flow rate, the motion in the wake of the cylinder becomes time-periodic, as alternating clockwise and counter clockwise vortices travel downstream~\cite{Kovasznay1949}. 

While there is little hope of writing down explicit solutions that describe such oscillatory behaviour, a number of authors have attempted to at least establish the existence of time-periodic solutions. The earliest contribution was likely the work of James Serrin. In 1959, he published two papers on the existence and stability of certain solutions to the Navier-Stokes equations in the limit of large viscosity. In the first one, he established the existence of globally stable equilibrium solutions by finding bounds for the nonlinear and forcing terms, and by showing that a certain energy decays \cite{Serrin1959a}.
In the second one, he considered large viscosity and gave a criterion for the existence of periodic solutions on a three-dimensional bounded domain subject to time-periodic boundary data and body forces~\cite{Serrin1959b}.

Many authors followed Serrin in studying the periodically forced non-autonomous Navier-Stokes system dominated by viscosity. Kaniel and Shinbrot \cite{Kaniel1967} considered bounded domains with fixed boundaries and showed the existence of periodic strong solutions for small time-periodic forcing $f$. Without making any assumption about the size of $f$, Takeshita \cite{MR0264254} showed the same result as Kaniel and Shinbrot. Some time later, Teramoto \cite{MR725967} proved the existence of time-periodic solutions for domains with slowly moving boundaries. Then, Maremonti \cite{Maremonti1991} and Kozono and Nakao \cite{Kozono1996} extended the results from bounded domains to $\mathbb{R}^3$. The latter made use of the $L^p$ theory of the Stokes operator rather than the energy method. A similar result, relying on a milder condition on the forcing function, was derived by Kato \cite{Kato1997}. Other extensions were those to inhomogeneous boundary conditions on compact domains by Farwig and Okabe \cite{Farwig2010} and to the case of a rotating fluid in two dimensions by Hsia {\sl et al.} \cite{Hsia2017}. The latter paper also contains a fairly extensive list of references of which only a fraction is discussed here. 

Thus, our understanding of periodic flows in response to time-periodic forcing is rather advanced. The same cannot be said about \emph{spontaneous} periodic motions, which we refer to as being periodic flows driven by a time-\emph{independent} forcing. In other words, spontaneous periodic motions are periodic orbits in the autonomous Navier-Stokes equations. The regular vortex shedding in the wake of a cylinder, for instance, arises in the absence of a body force and \emph{as a consequence of} the nonlinearity of the Navier-Stokes equation, not by virtue of the advection being dominated by viscous damping. In an attempt to address the difficulties in studying spontaneous motions, the present paper proposes a general (computer-assisted) approach to prove existence of time-periodic Navier-Stokes flows on the three-torus for given time-independent forcing terms $f=f(x)$.

The novelty of our paper is threefold. Foremost, it provides the first computer-assisted proof of existence of spontaneous periodic Navier-Stokes flows. Second, it introduces general analytic bounds applicable to prove existence of three-dimensional time-periodic solutions for any time-independent forcing term. Third, it uses the symmetries present in Navier-Stokes to significantly reduce the size of the problem to work with. 

A few comments on the symmetries are in order. Our approach permits one to take advantage of symmetries of the forcing $f$, and in particular of those (subgroup of) symmetries that are also obeyed by the examined solution. 
We allow general time-independent forcings with zero spatial average (so that periodic solutions are not ruled out a priori, 
see Equation~\eqref{e:spatialaverageforcing}). Any time-periodic solution thus spontaneously breaks the shift symmetry in time,
and other symmetries of~$f$ may also be broken by the solution. However, the bigger the symmetry group of the solution is, the more we can reduce the computational cost (in terms of time and, especially, memory). 

While all of the analysis is performed in full generality on the 3-torus, the solutions we present in Theorem~\ref{thm:NS_result} below are two-dimensional (in space) time-periodic solutions. Indeed, they are homogeneous in one spatial variable and can thus be interpreted as solutions on the 2-torus.
The only reason for this reduction is that the physical memory requirements for a three-dimensional solution are, for now, prohibitive in our current implementation.
To be precise, we consider the so-called Taylor-Green forcing
\begin{equation}
\label{eq:TG_intro}
  f=f(x)=\begin{pmatrix} \hfill 2 \sin x_1 \cos x_2 \\ - 2 \cos x_1 \sin x_2 \\0
  \end{pmatrix},
\end{equation}
which corresponds to counter rotating vortex columns. Clearly, this forcing allows one to restrict to the first two spatial variables.
It is expected that some periodic solutions in fact break the 2D symmetry of the forcing~\eqref{eq:TG_intro}, see also Section~\ref{s:application}.
While we aim to investigate such solutions in future work,
the solutions obtained in the current paper respect the 2D symmetry: they are independent of $x_3$ and the third component of the velocity vanishes.
We call such a solution an (essentially) 2D solution. In addition, the solutions that we find here are invariant under a symmetry group with 16 elements, see Section~\ref{s:application} for details. This allows us to reduce the number of independent Fourier modes on which we perform the computational analysis by a factor 16, which represents considerable savings in memory requirements. Finally, due to the shift-invariance of the torus, in general it may be appropriate to look for solutions which are shift-periodic, but such complications do not arise when studying 2D solutions for the forcing~\eqref{eq:TG_intro}. 

Before we state a representative sample result, we note that determining the period of the solution is part of the problem. Hence the frequency $\bar{\Omega}$ of a numerical approximate solution~$(\bar{u},\bar{p})$ only approximates the true frequency $\Omega$. The solution of~\eqref{eq:NS} will therefore be close to a slightly time-dilated version $(\bar{u}_\theta,\bar{p}_\theta)$ of
the numerical data, where $\theta$ is the dilation factor, see Remark~\ref{rem:comparison_of_norms}.
As outlined below in more detail, we use a Newton operator to show that, under computable conditions, there is a solution to~\eqref{eq:NS} near the numerically obtained approximation~$(\bar{u},\bar{p})$, where the error is bounded explicitly. 
As an example, we prove the following result. 
\begin{theorem} \label{thm:NS_result}
Consider \eqref{eq:NS} defined on the three-torus $\T^3$ (with size length $L=2\pi$) and consider the time-independent forcing term~\eqref{eq:TG_intro}.
Let $\nu = 0.265$ and $(\bar u,\bar p)$ be the numerical solution whose Fourier coefficients and time frequency $\bar \Omega$ are given in the file \verb+dataorbit2.mat+ and can be downloaded at \cite{navierstokescode} (and whose vorticity is represented in Figure~\ref{2DPO_intro}). 
Let $r_{\sol}^\Omega=2.2491 \cdot 10^{-6}$, $r_{\sol}^u=2.2491 \cdot 10^{-6}$, and $r_{\sol}^p=5.6486 \cdot 10^{-5}$.
There exists a $\frac{2 \pi}{\Omega}$-periodic solution $(u,p)$ of \eqref{eq:NS} with $|\Omega-\bar \Omega|\le r_{\sol}^\Omega$ and such that
\[
\|u-\bar{u}_{\Omega/\bar{\Omega}}\|_{\CC^0} \le r_{\sol}^u
\quad \text{and} \quad 
\|p-\bar{p}_{\Omega/\bar{\Omega}}\|_{\CC^0} \le r_{\sol}^p.
\]
\end{theorem}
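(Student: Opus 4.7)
The plan is to recast the search for a $\frac{2\pi}{\Omega}$-periodic solution of~\eqref{eq:NS} as a zero-finding problem $F(x)=0$ on a Banach space of geometrically decaying Fourier coefficients. Expanding the velocity, the pressure and the forcing in space-time Fourier series indexed by $(n,k)\in\Z\times\Z^3$, the momentum and incompressibility equations translate into an infinite-dimensional algebraic system for $(u_{n,k},p_{n,k})$ together with the unknown frequency $\Omega$. I would work on a weighted $\ell^1$-type space $\XX$ with a norm of the form $\sum |a_{n,k}|\,\weight^{|n|+|k|}$ for some $\weight>1$, so that the convolution encoding $(u\cdot\nabla)u$ is continuous with a handy Banach-algebra product, and so that membership in $\XX$ automatically implies analyticity and the continuous embedding $\XX\hookrightarrow\CC^0$ needed for the final error bounds. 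The time-shift indeterminacy inherent to autonomous periodic orbits is removed by appending an explicit phase condition, and $\Omega$ is added as an extra scalar unknown.

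Next, I would exploit the fact that the target solution is invariant under a 16-element subgroup of the symmetries preserved by the Taylor-Green forcing~\eqref{eq:TG_intro}, and restrict $F$ to the corresponding invariant subspace, obtaining an equivalent but much smaller problem $\Fred(x)=0$ on $\XXred$. The numerical data file provides an approximate zero $\ba$ of $\Fred$ at the frequency $\bO$. Existence of a true zero nearby is then established by the Newton-Kantorovich / radii-polynomial strategy: one builds an injective approximate inverse $A$ of $D\Fred(\ba)$ from a finite matrix inverse on a Galerkin block and, on the tail, from the explicit inverse of the diagonal linear operator $in\bO - \nu|k|^2$; one then produces three computable quantities $Y \ge \|A\Fred(\ba)\|$, $Z_1 \ge \|I-AD\Fred(\ba)\|$ and $Z_2(r) \ge \sup_{\|h\|\le r}\|A(D\Fred(\ba+h)-D\Fred(\ba))\|$. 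If the radii polynomial $P(r):=Y+(Z_1-1)r+Z_2(r)\,r$ is negative for some $r>0$, the Banach fixed point theorem applied to the Newton-like map $T(x)=x-A\Fred(x)$ yields a unique zero of $\Fred$ in the ball of radius $r$ around $\ba$, hence a genuine periodic solution of~\eqref{eq:NS}.

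The main obstacle, and where the bulk of the paper's analysis surely lies, is deriving the three bounds $Y$, $Z_1$, $Z_2$ in a form simultaneously general enough to cover arbitrary time-independent forcings and sharp enough to close at the moderate viscosity $\nu=0.265$. The quadratic nonlinearity $(u\cdot\nabla)u$ couples every Fourier mode to every other and, when composed with the Leray-type projection that extracts the pressure, demands careful tail estimates combining the geometric weight $\weight$ with the smoothing from $(in\bO-\nu|k|^2)^{-1}$. A secondary but important complication is that the pressure inherits one fewer spatial derivative than the velocity, so slightly different estimates — and correspondingly different radii — are required for $u$ and $p$, which is precisely why the statement carries a separate, larger $r_{\sol}^p$ alongside $r_{\sol}^u$. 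Once the bounds are expressed symbolically and the symmetry reduction is implemented, the finite-dimensional pieces are evaluated with interval arithmetic on $\ba$ and $\bO$; tracking the admissible $r$ through the embedding $\XX\hookrightarrow\CC^0$ and through the scalar coordinate $\Omega$ then yields the three explicit error bounds stated in the theorem.
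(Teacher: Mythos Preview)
Your high-level plan (zero-finding on a weighted $\ell^1$ Fourier space, Newton--Kantorovich with $Y$/$Z$ bounds, 16-element symmetry reduction, interval arithmetic verification) matches the paper, but two structural choices differ and one explanation is off.

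First, the paper does \emph{not} set up the fixed-point problem in the variables $(u,p)$. It passes to the vorticity $\omega=\nabla\times u$ and solves $\F(\Omega,\omega)=0$ for the vorticity equation alone, with $u=M\omega=(-\Delta)^{-1}\nabla\times\omega$ substituted. This eliminates the pressure from the contraction argument entirely; $u$ and $p$ are only recovered \emph{after} the fixed point is found, via $u=M\tilde\omega$ and $p=(-\Delta)^{-1}\nabla\cdot\bigl((u\cdot\nabla)u - f\bigr)$. Your proposal to carry $(u_{n,k},p_{n,k})$ as unknowns and use a Leray projector is a viable alternative, but it forces you to handle the divergence-free constraint inside the fixed-point map itself, which the paper sidesteps.

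Second, and relatedly, the paper deliberately uses the \emph{exact} Newton map $T=I-D\F(\bW)^{-1}\F$ rather than your $T=I-A\F$. The estimates~\eqref{def:Y_0}--\eqref{def:Z_2} are the same either way, but only the exact $T$ automatically preserves the divergence-free subspace $\XX^{\div}$ (Remark~\ref{rem:th_radii_pol}); with the approximate map one would have to force $A$ to respect $\nabla\cdot\omega=0$ exactly, which is awkward after symmetry reduction.

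Finally, your reason for $r_{\sol}^p > r_{\sol}^u$ is not the one in the paper. The pressure bound is larger not because of a derivative mismatch but because $p$ is recovered from a \emph{quadratic} expression in $u$: Lemma~\ref{lem:error_estimates} gives $\|p-\bar p\|\le (2\|\bar u\|+r)r$, so the pressure error picks up a factor of order $\|\bar u\|$ relative to the velocity error.
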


We point out that the $\CC^0$-norm is only used here to get a simple statement. A more general version of Theorem~\ref{thm:NS_result}, with a stronger norm which is the one actually used in the analysis, is presented in Theorem~\ref{thm:result2} in Section~\ref{s:application}.

\begin{figure}[t]
\begin{center}
\hspace{2cm}
\includegraphics[width=0.8\textwidth]{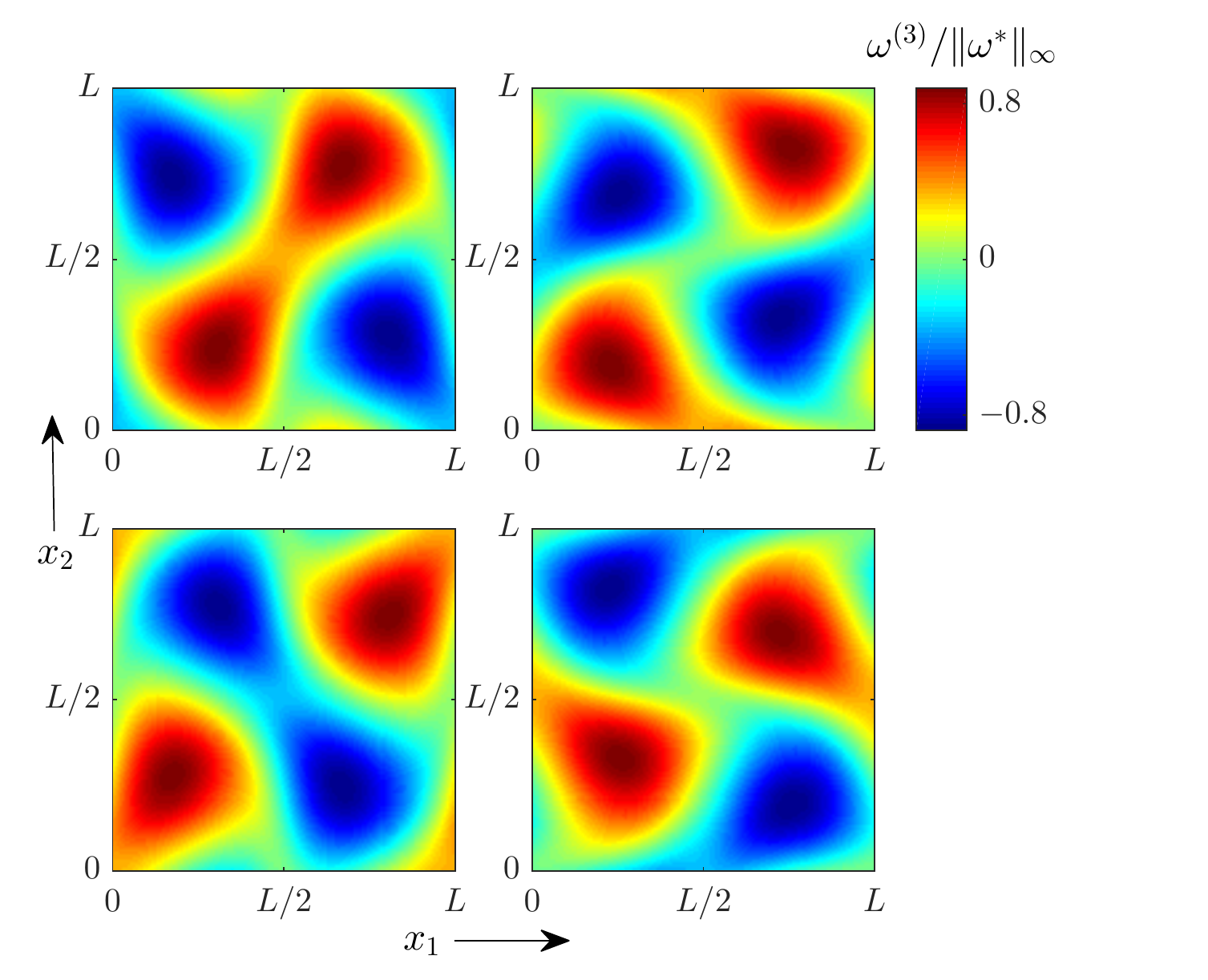}
\end{center}
\vspace{-.5cm}
\caption{The third component of $\bw = \nabla \times \bar u$ of the spontaneous periodic flow obtained in Theorem~\ref{thm:NS_result}, normalized by the amplitude of the equilibrium solution defined in \eqref{visc_eq}. Snap shots are depicted at times $0$, $\frac{\pi}{2\bO}$, $\frac{\pi}{\bO}$ and $\frac{3\pi}{2\bO}$.}
\label{2DPO_intro}
\end{figure}

It is important to recognize that in the last forty years, important open problems were settled with computer-assisted proofs: the universality of the 
Feigenbaum constant \cite{feigenbaum}, the four-colour theorem \cite{fourcolor}, the existence of the strange attractor in the Lorenz system \cite{lorenz} (i.e.\ Smale's 14th problem) and Kepler's densest sphere packing problem \cite{kepler}. We refer the interested to reader to the expository works \cite{MR3444942,gomez_survez,MR1420838,jay_konstantin_survey,MR1849323,Plu01,MR2652784,MR2807595} 
and the references therein, for a more complete overview of the field of rigorously verified numerics. Let us however mention some results related to the present work. In \cite{Watanabe1999}, Watanabe proposes an approach to obtain computer-assisted proofs of existence of stationary solution in the Navier-Stokes equation, which then lead to the rigorous computation of equilibria in a three-dimensional thermal convection problem \cite{Kim2009} and in Kolmogorov flow, i.e. flow with periodic boundary conditions and a constant body force with a simple structure \cite{Watanabe2009,Watanabe2016}. Independently, Heywood {\sl et al.} \cite{Heywood1999} established fixed-point theorems for steady, two-dimensional Kolmogorov flows. Their results fall short of a proof of existence only because of the presence of round-off error, which Watanabe avoided by using interval arithmetic. The rigorous computation of time-dependent solutions to the autonomous Navier-Stokes equation has so far been out of reach. We note that computer-assisted proofs for periodic orbits, along lines similar to the current paper, have been obtained for the Kuramoto-Shivashinsky PDE \cite{AriKoc10,FigLla17,GamLes17,Zgl04} and the ill-posed Boussinesq equation \cite{CasGamLes18}. 
 
Our strategy begins by identifying a problem of the form $\F(W)=0$ posed on a Banach algebra of geometrically decaying Fourier coefficients, whose solutions yield the time-periodic orbits. This zero finding problem is derived by applying the curl operator to \eqref{eq:NS} and solving for the periodic orbits in the vorticity equation. Expressing a periodic orbit using a space-time Fourier series and plugging the series in the vorticity equation yields the infinite dimensional nonlinear problem $\F(W)=0$, where $W$ corresponds to the sequence of Fourier coefficients of the vorticity $\omega = \nabla \times u$. The details of the derivation of the map $\F$ are given in Section~\ref{sec:zero finding problem}. A proof that the solutions of $\F=0$ correspond to time-periodic Navier-Stokes flows is presented in Lemma~\ref{lem:eq_NS_vorticity}. The next step is to consider a finite dimensional projection of $\F$ and to numerically obtain an approximation $\bW$ of a zero of $\F$, that is $\F(\bW) \approx 0$. 
Next, we turn the problem $\F(W)=0$ into an equivalent fixed point problem of the form $T(W) = W - D\F(\bW)^{-1} \F(W)$. We then set out to prove that $T$ is a contraction on a neighborhood of $\bW$. The advantage is that instead of trying to prove \emph{equalities} in the formulation $\F(W)=0$, contractivity involves \emph{inequalities} only. The proof then proceeds by a Newton-Kantorovich type argument (see Theorem~\ref{th:radii_pol} and Theorem~\ref{th:radii_pol_sym}) to find a ball centered at $\bW$ on which the map $T$ is a contraction mapping. Having done the hard work in the analysis of reducing the problem to finitely many explicit inequalities, one therefore resorts to interval arithmetic computer calculations for this final step of the proof. 

The paper is organized as follows. In Section~\ref{s:setup}, we introduce the
rigorous computational approach and the zero finding problem $\F(W)=0$, as well
as the Banach space in which we solve for the zeros of $\F$. The
Newton-Kantorovich type theorem is presented in Theorem~\ref{th:radii_pol}. In
Section~\ref{s:estimates}, we introduce the general bounds necessary to verify
the hypotheses of Theorem~\ref{th:radii_pol}. Then in
Section~\ref{s:symmetries} we describe how the symmetries of the model can be
used to simplify solving the zero-finding problem, by reducing significantly
its size. Using this reduction based on symmetries a modified
Newton-Kantorovich theorem is proved (Theorem~\ref{th:radii_pol_sym}) and the
associated symmetry-adapted estimates are derived in
Section~\ref{s:symmetrybounds}. Sample results are then presented in Section~\ref{s:application}.
All the estimates obtained in this paper
culminate in Theorems~\ref{thm:result1} and~\ref{thm:result2}, which allows us to validate periodic solutions $\omega$ of the vorticity equation, with explicit error bounds. In the Appendix we describe how to recover errors bounds for the associated velocity $u$ and pressure $p$ that solve the Navier-Stokes
equations.

\section{The rigorous computational approach}
\label{s:setup}

This section is devoted to the presentation of the framework that is needed to study periodic solutions of~\eqref{eq:NS} by computer-assisted means. We first derive a suitable $\F=0$ problem in Section~\ref{sec:zero finding problem} and introduce the proper Banach spaces to study that problem in Section~\ref{sec:notations}. Well chosen approximations of $D\F$ and $D\F^{-1}$ are then introduced in Section~\ref{s:AAdag}, and used in Section~\ref{s:radpol} to state Theorem~\ref{th:radii_pol}, which provides us with sufficient conditions for the existence of non trivial zeros of $\F$.

\subsection{The zero finding problem in Fourier space} \label{sec:zero finding problem}

In this section, we introduce the zero finding problem $\F=0$ that we are going to work with. We start by some (somewhat algebraic) manipulations and then explain in Lemma~\ref{lem:eq_NS_vorticity} how the zero finding problem is related to the original Navier-Stokes equations.

We consider the 3D incompressible Navier-Stokes equations \eqref{eq:NS} on the three-torus $\T^3$ and look for time-periodic solutions. As mentioned in the introduction, both the numerical and the theoretical part of our work are based on Fourier series, for which we will use the following notations. For $n=(n_1,n_2,n_3,n_4)\in\Z^4$, we write $n=(\tn,n_4)$ where $\tn=(n_1,n_2,n_3)$ and $\tn^2 \bydef n_1^2+n_2^2+n_3^2$. If $u:\T^3\times\R\to\R^3$ is a time-periodic function (that is periodic in the fourth variable), we denote by $\left(u_n\right)_{n\in \Z^4}\in \left(\C^3\right)^{\Z^4}$ its Fourier coefficients:
\begin{equation*}
\label{eq:function_coeffs_u}
u(x,t)=\sum_{n\in\Z^4} u_n e^{i(\tn\cdot x + n_4\Omega t)},
\end{equation*}
where $\Omega$ is the a-priori unknown angular frequency. 
\begin{remark}
In this paper, we are only concerned with smooth (that is analytic) periodic functions. Therefore, we can identify a function with its sequence of Fourier coefficients, and to make the notations lighter we use the same symbol to denote both of them. It should be clear from context whether $u$  (and similarly later for $\omega$, $f$, $\rhs$, etc.) denotes a periodic function or a sequence of Fourier coefficients.
\end{remark}
For $1\leq l\leq 3$, we use $u^{(l)}\in\C^{\Z^4}$ to denote the Fourier sequence of the $l$-th component of~$u$. For any Fourier sequence $a=(a_n)\in\C^{\Z^4}$ and any $1\leq l\leq 3$ we define the sequence $D_l a$ corresponding to the partial derivative of $a$ with respect to $x_l$ (up to a factor $i$): 
\begin{equation*}
\left(D_l a\right)_n \bydef n_l a_n \qquad \text{for all } n\in\Z^4.
\end{equation*}
For any Fourier sequence $a=(a_n)$ and $b=(b_n)$ in $\C^{\Z^4}$ we define their convolution product as
\begin{equation*}
(a\ast b)_n \bydef \sum_{k\in\Z^4}a_k b_{n-k}.
\end{equation*}
Finally, given $a,b\in\left(\C^3\right)^{\Z^4}$ we define
\begin{equation*}
\left[\left(a \star \tD\right) b\right]^{(l)} \bydef \sum_{m=1}^3 a^{(m)} \ast \left(D_m b^{(l)}\right) \qquad \text{for all } 1\leq l\leq 3,
\end{equation*}
which is the $l$-th component in Fourier space of $(a\cdot\nabla)b$, again up to a factor $i$. We will frequently use the following lemma.
\begin{lemma}
\label{lem:trick_derivative}
Let $a,b\in \left(\C^3\right)^{\Z^4}$ be such that $\sum_{m=1}^3 D_m a^{(m)}=0$. Then, for all $l\in\{1,2,3\}$,
\begin{equation*}
\left[\left(a \star \tD\right) b\right]^{(l)} = \sum_{m=1}^3 D_m \left[a^{(m)}\ast b^{(l)}\right].
\end{equation*}
\end{lemma}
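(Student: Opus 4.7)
The plan is to unfold both sides at an arbitrary Fourier mode $n \in \Z^4$ and exploit the elementary identity $n_m = k_m + (n-k)_m$ inside the convolution sum, which converts a derivative of a convolution into convolutions with a derivative. This is the Fourier-side analogue of the classical product rule $\partial_m(a^{(m)} b^{(l)}) = (\partial_m a^{(m)}) b^{(l)} + a^{(m)} \partial_m b^{(l)}$, and the hypothesis $\sum_m D_m a^{(m)} = 0$ plays exactly the role of the incompressibility condition used to rewrite $(a\cdot\nabla)b = \nabla\cdot(a\otimes b)$ in the smooth setting.

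Concretely, I would fix $l \in \{1,2,3\}$ and $n \in \Z^4$, and compute the $n$-th coefficient of the right-hand side:
\begin{equation*}
\left(D_m\left[a^{(m)}\ast b^{(l)}\right]\right)_n = n_m \sum_{k\in\Z^4} a^{(m)}_k b^{(l)}_{n-k} = \sum_{k\in\Z^4} k_m a^{(m)}_k b^{(l)}_{n-k} + \sum_{k\in\Z^4} a^{(m)}_k (n-k)_m b^{(l)}_{n-k},
\end{equation*}
which I would identify as $\bigl((D_m a^{(m)})\ast b^{(l)}\bigr)_n + \bigl(a^{(m)}\ast (D_m b^{(l)})\bigr)_n$ after reindexing the second sum.

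Summing over $m \in \{1,2,3\}$ and pulling the sum inside the convolution in the first term gives
\begin{equation*}
\sum_{m=1}^3 \left(D_m\left[a^{(m)}\ast b^{(l)}\right]\right)_n = \left(\left(\sum_{m=1}^3 D_m a^{(m)}\right) \ast b^{(l)}\right)_n + \sum_{m=1}^3 \left(a^{(m)}\ast (D_m b^{(l)})\right)_n.
\end{equation*}
The first summand vanishes by the divergence-free hypothesis $\sum_m D_m a^{(m)} = 0$, and the second is precisely $\bigl[(a\star \tD) b\bigr]^{(l)}_n$ by definition, completing the proof.

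I do not expect any real obstacle: the only minor point to justify is that the interchange of the finite sum over $m$ with the convolution, and the index shift $k \mapsto n-k$ in the second convolution sum, are purely formal operations on the sequence spaces with no convergence issue to check (the identity is pointwise in $n$ and the sums over $k$ are just the definition of $\ast$). Thus the whole argument is a short algebraic manipulation, and no analytic estimates are needed.
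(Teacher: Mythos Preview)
Your proof is correct and takes essentially the same approach as the paper: both use the Leibniz/product rule for $D_m$ acting on a convolution, then kill the $\sum_m (D_m a^{(m)})\ast b^{(l)}$ term via the divergence-free hypothesis. The only cosmetic difference is that you unfold the product rule explicitly at the level of Fourier coefficients (via $n_m = k_m + (n-k)_m$) and argue from the right-hand side to the left, whereas the paper simply invokes the product rule as an identity on sequences and argues left to right.
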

\begin{proof}
This is a consequence of the product rule:
\begin{align*}
\left[\left(a \star \tD\right) b\right]^{(l)} &= \sum_{m=1}^3 \left[a^{(m)} \ast D_m b^{(l)}\right] \\
&= \sum_{m=1}^3 \left(D_m\left[a^{(m)} \ast b^{(l)}\right] - \left[D_m a^{(m)} \ast b^{(l)}\right]\right) \\
&= \sum_{m=1}^3 D_m\left[a^{(m)} \ast b^{(l)}\right]. \qedhere
\end{align*}
\end{proof}

We are now almost ready to set up our $\F=0$ problem, but instead of looking directly at periodic solutions of~\eqref{eq:NS} we are going to work with the vorticity equation. Namely, we consider the vorticity $\omega = \nabla \times u$ and look for the equation it satisfies. Using 
\begin{equation*}
(u\cdot \nabla)u = \nabla \left(\frac{|u|^2}{2}\right)-u\times \omega,
\end{equation*}
we get
\begin{align}
\label{eq:curl_of_nonlin}
\nabla \times \left((u\cdot \nabla)u\right) &= \nabla \times \left(\omega\times u\right) \nonumber\\
&=  \left(u\cdot\nabla\right)\omega - \left(\omega\cdot\nabla\right)u + \omega\left(\nabla\cdot u\right) - u\left(\nabla\cdot\omega\right),
\end{align}
and since both $u$ and $\omega$ are divergence free we end up with
\begin{equation}
\label{eq:curl_simplified}
\nabla \times \left((u\cdot \nabla)u\right) = \left(u\cdot\nabla\right)\omega - \left(\omega\cdot\nabla\right)u. \end{equation}
The vorticity equation is then given by
\begin{equation}
\label{eq:vorticity}
\partial_t \omega + \left(u\cdot\nabla\right)\omega - \left(\omega\cdot\nabla\right)u - \nu\Delta \omega = \rhs \quad \text{on } \T^3\times\R,
\end{equation}
where $\rhs \bydef \nabla\times f$.

We are going to solve for the Fourier coefficients of $\omega$ satisfying the vorticity equation~\eqref{eq:vorticity}. More precisely, our unknowns are the Fourier coefficients $\left(\omega_n\right)_{n\in\Z^4}$ and the angular frequency $\Omega$. However, in~\eqref{eq:vorticity} the unknowns $\omega$ and $u$ both appear. To obtain an equation depending on the vorticity $\omega$ only, we need to express $u$ in term of $\omega$ by solving
\begin{equation*}
\left\{
\begin{aligned}
&\nabla\times u=\omega \\
&\nabla\cdot u=0.
\end{aligned}
\right.
\end{equation*}
Applying the curl operator to the first equation, and using that $\nabla\cdot u=0$, we get
\begin{equation*}
-\Delta u = \nabla\times \omega,
\end{equation*}
and so, formally,
\begin{equation}
\label{e:u_from_omega}
u=(-\Delta)^{-1}\nabla\times\omega.
\end{equation}
\begin{remark}\label{r:zerospatialaverage}
Expression \eqref{e:u_from_omega} is not completely well defined because the Laplacian has, in general, a non-zero kernel. In particular, the space average velocity 
\begin{equation*}
\int_{\T^3} u(x,t) dx = \sum_{n_4\in\Z} u_{0,n_4}e^{in_4\Omega t}
\end{equation*}
cannot be recovered from the vorticity $\omega$. However, going back to~\eqref{eq:NS} we have
\begin{equation}\label{e:spatialaverageforcing}
\frac{d}{dt}\int_{\T^3} u(x,t) dx = \int_{\T^3} f(x) dx.
\end{equation}
In this work, we consider a time independent forcing with spatial average equal to zero, therefore the space average velocity is a conserved quantity, which can always be assumed to be zero by Galilean invariance.
\end{remark}

To give a well defined version of~\eqref{e:u_from_omega}, we go through Fourier space and introduce
\begin{equation}\label{e:defM}
M_n \bydef 
\begin{cases}
\displaystyle
\frac{i}{\tn^2} \begin{pmatrix}
0 & -n_3 & n_2\\
n_3 & 0 & -n_1\\
-n_2 & n_1 & 0\\
\end{pmatrix},
& \tn\neq 0, \\
0, &\tn= 0,
\end{cases}
\end{equation}
and
\begin{equation*}
M\omega \bydef \left(M_n\omega_n\right)_{n\in\Z^4},
\end{equation*}
As mentioned previously, we can assume the space average velocity to be zero, which is why we define $(M\omega)_{0,n_4}$ to be zero for all $n_4\in\Z$. 
\begin{remark}
The above computations and construction of $M$ can be summarized in the following equivalence
\begin{equation*}
\label{rem:equivalence_u_omega}
  \left\{
  \begin{array}{l}
	   \nabla \times u = \omega \\
	   \nabla \cdot u =0 \\
	   \int_{\T^3} u=0
   \end{array}
   \right.
   \qquad\Longleftrightarrow\qquad
  \left\{
  \begin{array}{l}
   u = M \omega \\
   \nabla \cdot \omega =0.
   \end{array}
   \right.   
\end{equation*}
\end{remark}
Going back to~\eqref{eq:vorticity} and replacing $u$ by $M\omega$, we obtain the equation
\begin{equation}
\label{eq:F_physical_space}
\partial_t \omega + \left(M\omega\cdot\nabla\right)\omega - \left(\omega\cdot\nabla\right)M\omega - \nu\Delta \omega = \rhs \quad \text{on } \T^3\times\R,
\end{equation}
which is the one we are going to work with. More precisely, we first define \begin{equation}\label{eq:W}
W= \begin{pmatrix}
\Omega \\
\left(\omega_{n}\right)_{n\in\Z^4\setminus\{0\}}
\end{pmatrix},
\end{equation}
which corresponds to all the unknowns we are solving for. Notice that $\omega_0$ is not part of the unknowns, as it can always be taken equal to $0$. In the sequel, to simplify the presentation we introduce $\Z^4_*=\Z^4\setminus\{0\}$, and always identify $\left(\omega_{n}\right)_{n\in\Z^4_*}$ with $\left(\omega_{n}\right)_{n\in\Z^4}$ where $\omega_0=0$. In particular, the representation in physical space associated to $W$ is given by
\begin{equation}
\label{eq:function_coeffs_omega}
\omega(x,t)= \sum_{n\in\Z^4_*} \omega_n e^{i(\tn\cdot x + n_4\Omega t)}
= \sum_{n\in\Z^4} \omega_n e^{i(\tn\cdot x + n_4\Omega t)}.
\end{equation}
We then define $F=\left(F_n\right)_{n\in\Z^4_*}$ by
\begin{equation}\label{e:defFn}
F_n(W) \bydef
i\Omega n_4\omega_n + i\left[\left(M\omega\star \tD\right) \omega\right]_n -i\left[\left(\omega \star \tD\right) M\omega\right]_n +\nu\tn^2\omega_n - \rhs_n ,
\end{equation}
for all $n \in \Z^4_*$, and aim to show the existence of a nontrivial zero of $F$.

For later use (see Section~\ref{s:Z1bound} and Section~\ref{s:symboundZ1}), we also introduce the notation $\Psi=\left(\Psi_n\right)_{n\in\Z^4_*}$ for the nonlinear terms in~\eqref{e:defFn}:
\begin{equation}\label{e:defPsi}
\Psi_n(\omega) \bydef
i\left[\left(M\omega\star \tD\right) \omega\right]_n -i\left[\left(\omega \star \tD\right) M\omega\right]_n  \qquad \text{for all } n \in \Z^4_* .
\end{equation}

\begin{lemma}
\label{lem:eq_NS_vorticity}
Let $W\in\R\times(\C^3)^{\Z^4_*}$ be such that the corresponding function $\omega$ is analytic. Assume that $F(W)=0$ and $\nabla\cdot\omega=0$. Assume also that $f$ does not depend on time and has space average zero. Define $u=M\omega$. Then there exists a pressure function $p: \T^3\times\R \to \R$ such that $(u,p)$ is a $\frac{2\pi}{\Omega}$-periodic  solution of~\eqref{eq:NS}.
\end{lemma}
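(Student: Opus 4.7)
The plan is to construct the pressure $p$ by a Helmholtz-type argument applied to the momentum residual. Since $\nabla\cdot\omega=0$ and $\omega_0=0$ by the conventions introduced after~\eqref{eq:W}, the Remark following~\eqref{e:defM} gives, with $u=M\omega$, that $\nabla\times u=\omega$, $\nabla\cdot u=0$, and $\int_{\T^3} u(x,t)\,dx=0$ for every $t$. By construction $u$ is analytic and $\frac{2\pi}{\Omega}$-periodic in $t$, so the task reduces to producing a scalar $p$ for which the momentum equation holds.

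Next I would recognise $F(W)=0$ as the Fourier-space version of the vorticity equation~\eqref{eq:F_physical_space}: comparing~\eqref{e:defFn} with~\eqref{eq:F_physical_space}, each $F_n(W)$ is precisely the $n$-th Fourier coefficient of the difference of the two sides of~\eqref{eq:F_physical_space}. Since $F$ is defined only for $n\in\Z^4_*$, I would treat the mode $n=0$ by hand: the $\partial_t$ and $\Delta$ contributions vanish there, $f^\omega_0=0$ because $f^\omega=\nabla\times f$ is a curl, and by~\eqref{eq:curl_simplified} (applicable since $u$ and $\omega$ are both divergence-free) the nonlinear contribution equals the spatial mean of $\nabla\times((u\cdot\nabla)u)$, which likewise vanishes. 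Thus~\eqref{eq:F_physical_space} holds pointwise on $\T^3\times\R$.

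Finally, I would introduce the residual
\[
R(x,t) := \partial_t u + (u\cdot\nabla)u - \nu\Delta u - f
\]
and show $R=-\nabla p$ for some smooth scalar $p$. Computing $\nabla\times R$ with the help of~\eqref{eq:curl_simplified} reproduces exactly the left-hand side of the vorticity equation~\eqref{eq:vorticity}, so $\nabla\times R=0$ by the previous step. The field $R$ also has zero spatial mean at each time: $\int\partial_t u\,dx=\partial_t\int u\,dx=0$ by construction of $M$, $\int\Delta u\,dx=0$, $\int f\,dx=0$ by hypothesis, and $\int (u\cdot\nabla)u\,dx=0$ by integration by parts combined with $\nabla\cdot u=0$. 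On $\T^3$ a smooth curl-free vector field with vanishing spatial mean is a gradient (immediate in Fourier), and the resulting $p$, normalised to have zero spatial mean, inherits analyticity and $\frac{2\pi}{\Omega}$-periodicity in $t$ from $R$. The relation $-\nabla p=R$ is then the momentum equation of~\eqref{eq:NS}, concluding the argument. The main subtlety is the bookkeeping at the zero Fourier mode: this is precisely where the hypotheses $\nabla\cdot\omega=0$, $\int f=0$, and the convention $\omega_0=0$ all enter critically.
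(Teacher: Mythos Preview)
Your overall strategy is the same as the paper's: define the momentum residual (which the paper calls $\Phi$, see~\eqref{e:defPhi}), show it is curl-free with vanishing spatial average, and conclude $\Phi=-\nabla p$. However, there is a genuine gap in your first step.

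You claim that $\nabla\times u=\omega$ follows from the Remark after~\eqref{e:defM} together with $\nabla\cdot\omega=0$ and $\omega_0=0$. This is not quite right. The convention $\omega_0=0$ concerns only the single Fourier mode $n=(0,0,0,0)$, whereas $\nabla\times M\omega=\omega$ requires $\omega_{(0,0,0,n_4)}=0$ for \emph{every} $n_4\in\Z$, since $(M\omega)_{0,n_4}=0$ by definition and hence $(\nabla\times M\omega)_{0,n_4}=0$. Neither the divergence-free condition nor the convention on $\omega_0$ enforces this, and the equivalence in that Remark is informal on precisely this point. Without $\nabla\times u=\omega$ in hand, your subsequent identification of $\nabla\times R$ with the vorticity equation via~\eqref{eq:curl_simplified} breaks down, because~\eqref{eq:curl_simplified} is an identity for $\omega=\nabla\times u$, not for an arbitrary divergence-free $\omega$.

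The paper fills this gap by exploiting $F(W)=0$ at the indices $(0,0,0,n_4)$ with $n_4\neq 0$. There the Laplacian and forcing terms vanish, and Lemma~\ref{lem:trick_derivative} (applied to both nonlinear terms, using that $\omega$ and $M\omega$ are divergence free) shows the nonlinear contributions also vanish whenever $\tilde n=0$. What remains is $i\Omega n_4\,\omega_{0,n_4}=0$, hence $\omega_{0,n_4}=0$. Only then is $\nabla\times u=\omega$ established, after which the rest of your argument goes through verbatim. Your closing remark about ``bookkeeping at the zero Fourier mode'' is on target; you just need to distinguish between the single mode $n=0$ and the whole family $\tilde n=0$.
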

\begin{remark}
\label{rem:eq_NS_vorticity}
Let us mention that the analyticity condition could be considerably weakened: the lemma would hold for any smoothness assumption allowing to justify all the taken derivatives and the switches between functional and Fourier representation. In our case analyticity simply happens to be the most natural assumption, because of the space of Fourier coefficients we end up using, see Section~\ref{sec:notations} and Remark~\ref{rem:analyticity}.
\end{remark}
\begin{proof}
First, a straightforward computation gives that
\begin{equation*}
D_1\left(M\omega\right)^{(1)} + D_2\left(M\omega\right)^{(2)} + D_3\left(M\omega\right)^{(3)} =0,
\end{equation*}
for any $\omega$, which amounts to saying that $\nabla\cdot M\omega=0$, therefore $\nabla\cdot u=0$. 

The next step is to prove that $\nabla\times u=\omega$ (that is~$\nabla\times M\omega=\omega$). Using the definition of $M_n$ and the fact that $\nabla\cdot\omega=0$, another straightforward computation in Fourier space shows that $\left(\nabla\times M\omega\right)_n=\omega_n$ for all $\tn\neq 0$.
To prove that $\left(\nabla\times M\omega\right)_n=\omega_n$ for all $\tn= 0$,
first observe that $\left(\nabla\times M\omega\right)_n =0$ for $\tn= 0$.
Next, since $\omega$ and $M\omega$ are divergence free, we can use Lemma~\ref{lem:trick_derivative} on both nonlinear terms of $F$, and get that 
\begin{equation*}
\left[\left(M\omega\star \tD\right) \omega\right]_{0,n_4} =0 = \left[\left(\omega \star \tD\right) M\omega\right]_{0,n_4}
\qquad \text{for all } n_4\in\Z.
\end{equation*}
Therefore, $F_{0,n_4}(W)=0$ implies that $\omega_{0,n_4}=0$ for $n_4\neq 0$, which concludes the proof that $\nabla\times u=\omega$. 

Finally, we define the periodic function
\begin{equation}\label{e:defPhi}
\Phi \bydef \partial_t u + (u\cdot \nabla)u - \nu\Delta u - f.
\end{equation}
Using $\nabla\times u = \omega$ we get that, for all $n\neq 0$, $\left(\nabla\times\Phi\right)_n=F_n(W)$, and since $(\nabla\times\Phi)_0$ vanishes as well, we conclude that $\nabla\times\Phi=0$. Recalling that $u=M\omega$ is divergence free, Lemma~\ref{lem:trick_derivative} yields that
\begin{align*}
\left[\left(M\omega\star\tD\right) M\omega\right]^{(l)}_{0,n_4} =0, \qquad \text{for all } n_4\in\Z.
\end{align*}
Also using that $\left(M\omega\right)_{0,n_4}=0$ and that $f_{0,n_4}=0$ (since $f$ does not depend on time and has average zero), we get that $\Phi_{0,n_4}=0$ for all $n_4\in\Z$. We have shown that
\begin{equation*}
 \left\{
 \begin{aligned}
 &\nabla\times\Phi=0 \\
 &\Phi_{n}=0,\qquad\text{for all } \tn=0.
 \end{aligned}
 \right.
 \end{equation*} 
Therefore (see Lemma~\ref{lem:exist_grad}) there exists a $p$ such that $\Phi=-\nabla p$, that is
\begin{equation*}
\partial_t u + (u\cdot \nabla)u - \nu\Delta u + \nabla p = f.
\end{equation*}
Since we had already shown that $u$ is divergence free, this completes the proof.
\end{proof}

Finally, in view of the contraction argument that we are going to apply, we add a phase condition in order to isolate the solution. 
Assume that we have an approximate periodic orbit given by $\hat\omega$. 
A common choice for the phase condition (see e.g.~\cite{kuznetsov2013elements}) is to require that the orbit $\omega$ satisfies
\begin{equation*}
\int\limits_0^{\frac{2\pi}{\Omega}}\int\limits_{\T^3} \omega(x,t) \cdot \partial_t\hat\omega(x,t)~dx dt =0.
\end{equation*}
Hence, we define the phase condition
\begin{equation}
\label{eq:phase_condition}
F_{\phase}(W)=i\sum_{l=1}^3\sum_{n\in\Z^4_*}\omega_n^{(l)}n_4\left(\hat\omega_n^{(l)}\right)^*,
\end{equation}
where the superscript ${}^*$ denotes complex conjugation, and we assumed that the reference orbit given by $\hat{\omega}$ is real-valued, that is $\hat\omega_{-n}^{(l)}=\left(\hat\omega_n^{(l)}\right)^*$.

We now consider the enlarged problem
\begin{equation}\label{eq:fullFnosymmetry}
\F= \begin{pmatrix}
F_{\phase} \\
\left(F_{n}\right)_{n\in\Z^4_*}
\end{pmatrix}.
\end{equation}
In view of the similarity between~\eqref{eq:W} and~\eqref{eq:fullFnosymmetry},
we will abuse notation and refer to the set of elements of such variables via
\[
  \{ Q_n \}_{n \in \{\phase,\Z^4_* \}} \,,
\]
where $Q_{\phase} \in \C$ and $Q_n \in \C^3$ for $n\in\Z^4_*$,
rather than introducing notation for the projections onto different components.

\subsection{Banach spaces, norms, index sets}
\label{sec:notations}

In this section we introduce the Banach spaces on which we are going to study $\F$, as well as some additional notations that are going to be used throughout this paper.

For $\eta\geq 1$, we denote by $\ell^1_\eta(\C)$ the subspace of all sequences $a\in\C^{\Z^4_*}$ such that
\begin{equation*}
\left\Vert a\right\Vert_{\ell^1_{\eta}} \bydef \sum_{n\in\Z^4_*}\vert a_n\vert \eta^{ \left\vert n\right\vert_1}<\infty,
\end{equation*}
with $\left\vert n\right\vert_1=\sum_{j=1}^4 \vert n_j\vert$. We also introduce the subspaces $\ell^1_{\eta,-2,-1}(\C)$, $\ell^1_{\eta,-1,-1}(\C)$ and $\ell^1_{\eta,-1,0}(\C)$ of $\C^{\Z^4_*}$, associated to the norms
\begin{equation*}
\left\Vert a\right\Vert_{\ell^1_{\eta,-2,-1}} \bydef \sum_{n\in\Z^4_*}\vert a_n\vert \frac{\eta^{ \left\vert n\right\vert_1}}{\max(\vert\tn\vert_\infty^2,\vert n_4\vert)} , \qquad
\left\Vert a\right\Vert_{\ell^1_{\eta,-1,-1}} \bydef \sum_{n\in\Z^4_*}\vert a_n\vert \frac{\eta^{ \left\vert n\right\vert_1}}{\vert n\vert_\infty}
\end{equation*}
and
\begin{equation*}
\left\Vert a\right\Vert_{\ell^1_{\eta,-1,0}} \bydef \sum_{n\in\Z^4_*}\vert a_n\vert \frac{\eta^{ \left\vert n\right\vert_1}}{\vert \tn\vert_\infty},
\end{equation*}
with $\vert n\vert_\infty=\max_{1\leq j\leq 4} \vert n_j\vert$ and $\vert \tn\vert_\infty=\max_{1\leq j\leq 3} \vert n_j\vert$.

The main space in which we are going to work is the Banach space $\XX=\C\times \left(\ell^1_{\eta}(\C)\right)^3$ with the norm
\begin{equation*}
\left\Vert W \right\Vert_{\XX}=\vert \Omega\vert +\sum_{1\leq l\leq 3} \Vert \omega^{(l)} \Vert_{\ell^1_{\eta}}.
\end{equation*}
\begin{remark}\label{r:unitweight}
Since $\Omega$ and $\omega$ are incommensurable, $\Omega$ and $\omega$ do not live in the same space it is prudent to introduce an extra weight in the norm for the $\vert \Omega\vert$ term. Depending on the application at hand this may indeed be necessary, but for the results in the current paper setting this weight to unity suffices.
\end{remark}
\begin{remark}
\label{rem:analyticity}
Notice that, as soon as $\Omega\in\R$ and $\eta>1$, the function $\omega$ associated to an element $W$ of $\XX$ via~\eqref{eq:function_coeffs_omega} is analytic.
\end{remark}
Similarly, we introduce the Banach spaces $\XX_{-2,-1}=\C\times \left(\ell^1_{\eta,-2,-1}(\C)\right)^3$ and $\XX_{-1,-1}=\C\times \left(\ell^1_{\eta,-1,-1}(\C)\right)^3$, respectively endowed with the norms
\begin{equation*}
\left\Vert W \right\Vert_{\XX_{-2,-1}}=\vert \Omega\vert +\sum_{1\leq l\leq 3} \Vert \omega^{(l)} \Vert_{\ell^1_{\eta,-2,-1}}
\end{equation*}
and
\begin{equation*}
\left\Vert W \right\Vert_{\XX_{-1,-1}}=\vert \Omega\vert +\sum_{1\leq l\leq 3} \Vert \omega^{(l)} \Vert_{\ell^1_{\eta,-1,-1}}.
\end{equation*}
Notice that $\F$ defined in \eqref{eq:fullFnosymmetry} maps $\XX$ into $\XX_{-2,-1}$. We are also going to consider subspaces of divergence free sequences of $\XX$ and $\XX_{-2,-1}$, namely:
\begin{equation*}
\XX^{\div} \bydef \left\{W\in \XX : \sum_{m=1}^3 D_m\omega^{(m)}=0\right\},
\quad \XX_{-2,-1}^{\div} \bydef \left\{W\in \XX_{-2,-1} : \sum_{m=1}^3 D_m\omega^{(m)}=0\right\}.
\end{equation*}
Notice that $\omega \mapsto \sum_{m=1}^3 D_m\omega^{(m)}$ is a bounded linear map from $\left(\ell^1_\eta\right)^3$ to $\ell^1_{\eta,-1,0}$, therefore $\XX^{\div}$ is a closed subspace of $\XX$ and thus still a Banach space, with the norm $\left\Vert\cdot\right\Vert_{\XX}$. Similarly, $\XX_{-2,-1}^{\div}$ is  a Banach space, with the norm $\left\Vert\cdot\right\Vert_{\XX_{-2,-1}}$.

We recall that, in Lemma~\ref{lem:eq_NS_vorticity}, we need the zero of $F$ to be divergence free in order to prove that it corresponds to a solution of Navier-Stokes equation~\eqref{eq:NS}. Therefore, when we later prove the existence of a zero of $\F$ in $\XX$, it is crucial to be able to show that this zero is actually in~$\XX^{\div}$. To do so in Theorem~\ref{th:radii_pol}, we will make use of the following observation.
\begin{lemma}
\label{lem:X_div}
$\F$ maps $\XX^{\div}$ to $\XX_{-2,-1}^{\div}$.
\end{lemma}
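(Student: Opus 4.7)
The plan is to work directly in Fourier space: since $\XX_{-2,-1}^{\div}$ is defined by the constraint $\sum_{m=1}^3 D_m F^{(m)} = 0$, and $(D_m a)_n = n_m a_n$, I need to show that
\begin{equation*}
\sum_{l=1}^3 n_l F_n^{(l)}(W) = 0 \qquad \text{for every } n\in\Z^4_*,
\end{equation*}
whenever $W = (\Omega,\omega)\in\XX^{\div}$. (The mapping $\F:\XX\to\XX_{-2,-1}$ itself is purely a Banach-algebra/bookkeeping statement and may be assumed granted in the present context.) The phase component $F_\phase$ is scalar and plays no role.

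The linear terms are immediate. The two contributions $i\Omega n_4 \omega_n^{(l)}$ and $\nu\tn^2 \omega_n^{(l)}$ each have a coefficient independent of $l$, so they give
\begin{equation*}
(i\Omega n_4 + \nu\tn^2)\sum_{l=1}^3 n_l \omega_n^{(l)} = 0
\end{equation*}
by the assumption $W\in\XX^{\div}$. The forcing term $\rhs = \nabla\times f$ is a curl, and so $\sum_l n_l \rhs_n^{(l)} = 0$ for every $n$.

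The nonlinear terms are the substantive point, but they also collapse cleanly. From the proof of Lemma~\ref{lem:eq_NS_vorticity} we already know that $\nabla\cdot M\omega = 0$, and by hypothesis $\nabla\cdot\omega = 0$. Lemma~\ref{lem:trick_derivative} therefore applies to both nonlinear expressions:
\begin{equation*}
\left[(M\omega\star\tD)\omega\right]^{(l)} = \sum_{m=1}^3 D_m\!\left[(M\omega)^{(m)}\ast \omega^{(l)}\right], \qquad \left[(\omega\star\tD)M\omega\right]^{(l)} = \sum_{m=1}^3 D_m\!\left[\omega^{(m)}\ast (M\omega)^{(l)}\right].
\end{equation*}
Applying $\sum_l D_l$ to each and using that $D_lD_m = D_mD_l$ together with the commutativity of the convolution product $\ast$, a relabeling $l\leftrightarrow m$ shows the two resulting double sums are identical. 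Hence their difference vanishes, contributing $0$ to $\sum_l n_l F_n^{(l)}(W)$.

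The only place where anything could go wrong is the nonlinear cancellation; the possible obstacle is verifying that Lemma~\ref{lem:trick_derivative} is indeed applicable to the second nonlinear term, which requires $\nabla\cdot\omega=0$ (this is precisely encoded in $W\in\XX^{\div}$) rather than $\nabla\cdot M\omega = 0$. With this checked, the four contributions combine to give $\sum_l n_l F_n^{(l)}(W) = 0$ for all $n\in\Z^4_*$, which is exactly the statement $\F(W)\in\XX_{-2,-1}^{\div}$.
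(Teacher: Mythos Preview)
Your argument is correct, but it takes a different route from the paper's. The paper observes that, once $\nabla\cdot\omega=0$ and $\nabla\cdot M\omega=0$, the vector identity~\eqref{eq:curl_of_nonlin}--\eqref{eq:curl_simplified} can be run in reverse to rewrite the entire nonlinear contribution $(M\omega\cdot\nabla)\omega-(\omega\cdot\nabla)M\omega$ as the curl $\nabla\times(\omega\times M\omega)$; divergence-freeness then follows immediately from $\nabla\cdot(\nabla\times\,\cdot\,)=0$. Your approach instead applies Lemma~\ref{lem:trick_derivative} separately to each nonlinear term and then exploits the $l\leftrightarrow m$ symmetry of the resulting double sum $\sum_{l,m} D_lD_m[(M\omega)^{(m)}\ast\omega^{(l)}]$ to see the cancellation directly. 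Both arguments use exactly the same hypotheses, and yours is arguably more self-contained since it stays entirely within the Fourier/index framework and does not invoke the cross-product identity; the paper's version, on the other hand, makes the geometric reason (the nonlinearity is a curl) more transparent.
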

\begin{proof}
We need to show that, for all $\omega$ satisfying $\nabla\cdot\omega=0$, we have $\nabla\cdot F(W)=0$. The only term in $F(W)$ that is not obviously divergence free is the nonlinear term, but since $\nabla\cdot \omega=0$ by assumption and $\nabla\cdot M\omega=0$ by construction of $M$, we can proceed as in~\eqref{eq:curl_of_nonlin}-\eqref{eq:curl_simplified} to rewrite the nonlinear term as a curl, therefore concluding that it is indeed divergence free.
\end{proof}

Similarly, we want to obtain zero of $\F$ that corresponds to a real-valued function. To ensure this, the following notation and observation are needed.

\begin{definition}
\label{def:complex_conj}
We define the complex conjugation symmetry $\gamma_*$, acting on $\C\times\left(\C^3\right)^{\Z^4_*}$, by
\begin{equation*}
(\gamma_*Q)_n = \begin{cases}
Q_{\phase}^*  & \text{if } n =\phase \\   
    Q_{-n}^*  & \text{if } n \in \Z^4_*,
\end{cases}
\end{equation*}
where ${}^*$ denotes complex conjugation, which is to be understood component-wise when applied to $Q_n\in\C^3$. We still use the symbol $\gamma_*$ to denote the complex conjugation symmetry acting on~$\left(\C^3\right)^{\Z^4_*}$.
\end{definition}

Notice that, thanks to the factor $i$ in the definition~\eqref{eq:phase_condition} of $F_\phase$, $\F$ is $\gamma_*$-equivariant. More precisely, one has the following statement, which will be used for Theorem~\ref{th:radii_pol}.
\begin{lemma}
\label{lem:FequivariantS}
Assume that $\hat \omega$ used in the phase condition~\eqref{eq:phase_condition} is such that $\gamma_*\hat{\omega}=\hat{\omega}$. Then
\begin{equation}\label{e:Fisgammastarequivariant}
	\F(\gamma_* W) = \gamma_* \F(W) \qquad\text{for all } W\in\XX .
\end{equation}
\end{lemma}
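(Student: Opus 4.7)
The plan is to verify the equivariance slot by slot: for each $n \in \{\phase\} \cup \Z^4_*$, match the $n$-th component of $\F(\gamma_*W)$ with that of $\gamma_*\F(W)$, which equals $F_\phase(W)^*$ in the phase slot and $F_{-n}(W)^*$ otherwise. Because $\F$ is a sum of linear, bilinear, and forcing contributions, it suffices to check the $\gamma_*$-equivariance of each summand separately and add.

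I would first dispose of the phase condition \eqref{eq:phase_condition}. Starting from $F_\phase(\gamma_*W)=i\sum_l\sum_n(\omega_{-n}^{(l)})^*\,n_4(\hat\omega_n^{(l)})^*$, I would substitute $m=-n$ to pull the minus sign into $n_4$ and then invoke the assumption $\gamma_*\hat\omega=\hat\omega$, equivalently $(\hat\omega_{-m}^{(l)})^*=\hat\omega_m^{(l)}$, to recognise the resulting sum as $-i\sum_l\sum_m(\omega_m^{(l)})^*m_4\hat\omega_m^{(l)}=F_\phase(W)^*$. The sign coming from $n_4\mapsto-m_4$ is exactly the sign obtained by conjugating the leading factor $i$, which is precisely why $F_\phase$ was defined with that $i$ up front.

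For $n\in\Z^4_*$, I would inspect the summands of \eqref{e:defFn}. The linear terms $i\Omega n_4\omega_n$ and $\nu\tn^2\omega_n$ are immediate: $\tn^2$ is invariant under $n\mapsto-n$, while the flip in $n_4$ matches the conjugation of $i$. The forcing term $-\rhs_n$ matches because $f$ is real, so $\rhs=\nabla\times f$ is real-valued, which implies $\rhs_{-n}=\rhs_n^*$.

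The nonlinear terms require a short computation and form the only delicate step. I would first observe that $M$ commutes with $\gamma_*$: from \eqref{e:defM} each entry of $M_n$ is purely imaginary and linear in $n$, so $M_{-n}=-M_n$ and $M_n^*=-M_n$, whence $M_{-n}^*=M_n$ and therefore $M(\gamma_*\omega)=\gamma_*(M\omega)$. I would then compute $[(\gamma_*a\star\tD)\gamma_*b]_n$ by re-indexing the convolution sum via $k\mapsto-k$; the crucial observation is that $D_m$ contributes a factor $(n-k)_m$, whose sign flips when the external index $n$ is reflected to $-n$, producing
\[
  [(\gamma_*a\star\tD)\gamma_*b]_n=-\gamma_*\bigl[(a\star\tD)b\bigr]_n .
\]
The leading factor $i$ in front of each bilinear term in \eqref{e:defFn} contributes another sign under $\gamma_*$, the two signs cancel, and both nonlinear terms are $\gamma_*$-equivariant. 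The whole proof is thus bookkeeping: three potential sign sources — the factor $i$ under conjugation, the odd parity of $D_m$ under index reflection, and the odd parity of $M_n$ — must be tracked simultaneously and shown to combine to the trivial total sign.
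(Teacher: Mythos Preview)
Your proof is correct and supplies precisely the detailed component-by-component verification that the paper omits; the paper states the lemma without proof, offering only the hint that the factor $i$ in~\eqref{eq:phase_condition} is what makes the phase condition $\gamma_*$-equivariant. Your tracking of the three sign sources (conjugation of $i$, odd parity of $D_m$, odd parity of $M_n$) is exactly the bookkeeping required, and your implicit use of the real-valuedness of $f$ for the forcing term is a necessary ingredient that the paper takes for granted throughout.
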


We end this section with a last set of notations that will be useful to describe linear operators.
\begin{notations}
\label{not:bloc}
To work with a linear operator $B:\XX\to\XX$, it is convenient to introduce the \emph{block notation}
\renewcommand\arraystretch{1.3}
\begin{equation*}
B=\left(\begin{array}{c|c|c|c}
B^{(\phase,\phase)} & B^{(\phase,1)} & B^{(\phase,2)} & B^{(\phase,3)}\\
\hline 
B^{(1,\phase)} & B^{(1,1)} & B^{(1,2)} & B^{(1,3)}\\
\hline 
B^{(2,\phase)} & B^{(2,1)} & B^{(2,2)} & B^{(2,3)}\\
\hline 
B^{(3,\phase)} & B^{(3,1)} & B^{(3,2)} & B^{(3,3)}
\end{array}\right),
\end{equation*}
\renewcommand\arraystretch{1}
where
\begin{itemize}
\item $B^{(l,m)}$ is a linear operator from $\ell^1_\eta$ to $\ell^1_\eta$, for all $1\leq l,m\leq 3$,
\item $B^{(\phase,m)}$ is a linear operator from $\ell^1_\eta$ to $\C$, for all $1\leq m\leq 3$,
\item $B^{(l,\phase)}$ is a linear operator from $\C$ to $\ell^1_\eta$, for all $1\leq l\leq 3$,
\item $B^{(\phase,\phase)}$ is a linear operator from $\C$ to $\C$.
\end{itemize}
For all $1\leq l,m\leq 3$, we write $B^{(l,m)}=\left(B^{(l,m)}_{k,n}\right)_{k,n\in\Z^4_*}$, so that, for all $a\in\ell^1_\eta$ and all $k\in\Z^4_*$,
\begin{equation*}
\left(B^{(l,m)}a\right)_k = \sum_{n\in\Z^4_*}B^{(l,m)}_{k,n}a_n.
\end{equation*}
Similarly, $B^{(\phase,m)}=\left(B^{(\phase,m)}_{n}\right)_{n\in\Z^4_*}$ and $B^{(l,\phase)}=\left(B^{(l,\phase)}_{k}\right)_{k\in\Z^4_*}$, so that, for all $a\in\ell^1_\eta$, all $\Omega\in\C$ and all $k\in\Z^4_*$,
\begin{equation*}
B^{(\phase,m)}a = \sum_{n\in\Z^4_*}B^{(\phase,m)}_{n}a_n \quad\text{and}\quad \left(B^{(l,\phase)}\Omega\right)_k = B^{(l,\phase)}_{k}\Omega.
\end{equation*}
We also use 
\begin{equation*}
B^{(.,\phase)}=\begin{pmatrix} B^{(\phase,\phase)} \\ B^{(1,\phase)} \\ B^{(2,\phase)} \\ B^{(3,\phase)} \end{pmatrix} \quad\text{and}\quad B^{(.,m)}_{.,n}=\begin{pmatrix} B^{(\phase,m)}_n \\ \left(B^{(1,m)}_{k,n}\right)_{k\in\Z^4_*} \\ \left(B^{(2,m)}_{k,n}\right)_{k\in\Z^4_*} \\ \left(B^{(3,m)}_{k,n}\right)_{k\in\Z^4_*} \end{pmatrix}
\end{equation*}
to denote all the columns of $B$. Similar notations will be used when $B$ is a linear operator from $\XX$ to $\XX_{-2,-1}$, or vice versa.
\end{notations}

\subsection{The operators \boldmath$\Adag$\unboldmath~and \boldmath$A$\unboldmath}
\label{s:AAdag}

We now assume that we have an approximate zero $\bW=(\bO,\bw)\in\XX$ of $\F$. In practice $\bW$ will only a have finite number of non zero coefficients (see~\eqref{e:defSsol}), but this is not crucial for the moment.  

As mentioned in Section~\ref{s:introduction}, our strategy to obtain the existence of a zero of $\F$ in a neighborhood of $\bW$ is to show that a Newton-like operator of the form
\begin{equation*}
T:W\mapsto W-D\F(\bW)^{-1}\F(W)
\end{equation*}
is a contraction in a neighborhood of $\bW$. To prove this directly, we would need to derive a (computable) estimate of $\left\Vert D\F(\bW)^{-1} \right\Vert_{B(\XX_{-2,-1},\XX)}$ (see Remark~\ref{rem:th_radii_pol}), which would be a quite formidable task. We circumvent this difficulty by working with an approximate inverse $A$ of~$D\F(\bW)^{-1}$, which is defined via a finite dimensional numerical approximation, and for which estimates are much easier to obtain. More precisely, we are going to consider a finite dimensional projection of $D\F(\bW)$, invert it numerically and then use the numerical inverse to construct $A$. To define $A$ precisely, it will be convenient to use the following notation.
\begin{definition}
\label{def:ball_coef}
Let $\nu$ be the kinematic viscosity used in~\eqref{eq:NS} and $\bO$ be the time-frequency of the numerical solution $\bW$. For all $n\in\Z^4$ and $n\in\N$, we define 
\begin{equation*}
\mu(n) \bydef \left\vert \nu\tn^2+i\bO n_4 \right\vert, 
\end{equation*}
and the set:
\begin{equation*}
\E(N)\bydef \left\lbrace n\in\Z^4_* : \mu(n) \leq N\right\rbrace.
\end{equation*}
\end{definition}
We then fix $N^\dag\in\N\setminus\{0\}$ (to be chosen later) and consider the set $\E^\dag\bydef\E(N^\dag)$ corresponding to the subset of indices that are used in
the definitions of $\Adag$ and $A$ below. 
The reasoning for choosing such a set $\E^\dag$ will become apparent later, when we have to control the dominant
linear part of $\F$ (see for instance Section~\ref{s:Z1bound}). This choice is also
quite natural from a spectral viewpoint. Indeed $\E^\dag$ is the set of all indices corresponding to 
eigenvalues of the heat operator $\partial_t - \nu\Delta$ with modulus less than
$N^\dag$ (see Lemma~\ref{lem:C}).

\begin{definition}
We define the subspace $\XX^\dag$ of $\XX$ as 
\begin{equation*}
\XX^\dag =\left\{ W\in \XX :  \omega_n=0,\ \forall~n\notin\E^\dag \right\}.
\end{equation*}
For $a\in\C^{\Z^4_*}$, we define
\begin{equation*}
\Pi^\dag a = \left(a_n\right)_{n\in \E^\dag}.
\end{equation*}
For $W=(\Omega,\omega)\in\XX$, this notation naturally extends to
\begin{equation*}
\Pi^\dag\omega = \begin{pmatrix} \Pi^\dag\omega^{(1)} \\ \Pi^\dag\omega^{(2)} \\ \Pi^\dag\omega^{(3)} \end{pmatrix} \quad\text{and}\quad \Pi^\dag W=\begin{pmatrix} \Omega \\ \Pi^\dag\omega \end{pmatrix}.
\end{equation*}
In the sequel, we identify the finite dimensional vector $\Pi^\dag W$ with its natural injection into $\XX^\dag$, and therefore interpret $\Pi^\dag$ as the canonical projection from $\XX$ to $\XX^\dag$. These notations also naturally extend to $\XX_{-1,-2}$.
\end{definition}
We are now ready to introduce an approximation $\Adag$ of $D\F(\bW)$ and then an approximation $A$ of $D\F(\bW)^{-1}$.
The bounded linear operator $\Adag:\XX\to \XX_{-2,-1}$ is defined by
\begin{equation*}
\left\{
\begin{aligned}
&\Adag \Pi^\dag W = \Pi^\dag D\F(\bW)|_{\XX^\dag} \Pi^\dag W , \\
&\left(\Adag\left(I-\Pi^\dag\right)W\right)_n = \left(\nu\tn^2+i\bar \Omega n_4\right)\omega_n,\qquad \text{for 
} n\notin\E^\dag.
\end{aligned}
\right.
\end{equation*}
Notice that $\Adag$ leaves both subspaces $\XX^\dag$ and $(I-\Pi^\dag)\XX$ invariant, and that it acts diagonally on $(I-\Pi^\dag)\XX$.
 
Next, we introduce $A^{(N^\dag)}$, an approximate inverse of $\Pi^\dag D\F(\bW)|_{\XX^\dag}$ that is computed numerically
(interpreting it as a finite matrix), and the bounded linear operator $A:\XX_{-2,-1}\to\XX$ defined by
\begin{equation*}
\left\{
\begin{aligned}
&A \Pi^\dag W = A^{(N^\dag)} \Pi^\dag W \\
&\left(A\left(I-\Pi^\dag\right)W\right)_n = \lambda_n\omega_n,\qquad  \text{for }  n\notin\E^\dag,
\end{aligned}
\right.
\end{equation*}
where 
\begin{equation}\label{e:deflambda}
 \lambda_n \bydef \frac{1}{\nu\tn^2+i\bar\Omega n_4}.
\end{equation}
Notice that $A$ also leaves both subspaces $\XX^\dag$ and $(I-\Pi^\dag)\XX$ invariant, and that it acts diagonally on $(I-\Pi^\dag)\XX$.

\subsection{A posteriori validation framework}
\label{s:radpol}

We are now ready to give sufficient conditions for the a posteriori validation of the solution $\bW$, that is conditions under which the existence of a zero of $\F$ in a neighborhood of $\bW$ is guaranteed. This is the content of the following theorem.

\begin{theorem}
\label{th:radii_pol}
Let $\eta>1$. With the notations of the previous sections, assume there exist $\bW\in\XX$ and non-negative constants $Y_0$, $Z_0$, $Z_1$ and $Z_2$ such that
\begin{align}
\left\Vert A\F(\bW) \right\Vert_{\XX} &\leq Y_0 \label{def:Y_0}\\
\left\Vert I-A\Adag \right\Vert_{B(\XX,\XX)} &\leq Z_0 \label{def:Z_0}\\
\left\Vert A\left(D\F(\bW)-\Adag\right) \right\Vert_{B(\XX,\XX)} &\leq Z_1 \label{def:Z_1}\\
\left\Vert A(D\F(W)-D\F(\bW)) \right\Vert_{B(\XX,\XX)} &\leq Z_2 \left\Vert W-\bW \right\Vert_{\XX}, \quad \text{for all } W\in\XX. \label{def:Z_2}
\end{align}
Assume also that
\begin{itemize}
\item the forcing term $f$ is time independent and has space average zero;
\item $\bW$ is in $\XX^{\div}$;
\item $\hat\omega$ (used to define the phase condition~\eqref{eq:phase_condition}) and $\bW$ are such that $\gamma_*\hat\omega=\hat\omega$ and $\gamma_*\bW=\bW$.
\end{itemize}
If 
\begin{equation}
\label{hyp:cond_pol}
Z_0+Z_1<1 \qquad \text{and}\qquad 2Y_0Z_2<\left(1-(Z_0+Z_1)\right)^2,
\end{equation}
then, for all $r\in[r_{\min},r_{\max})$ there exists a unique $\tilde W =(\tilde \Omega, \tilde \omega) \in \B_{\XX}(\bW,r)$ such that $\F(\tilde W)=0$, where $\B_{\XX}(\bW,r)$ is the closed ball of ${\XX}$, centered at $\bW$ and of radius $r$, and
\begin{equation*}
r_{\min} \bydef \frac{1-(Z_0+Z_1)-\sqrt{\left(1-(Z_0+Z_1)\right)^2-2Y_0Z_2}}{Z_2},\qquad r_{\max} \bydef \frac{1-(Z_0+Z_1)}{Z_2}.
\end{equation*}
Besides, this unique $\tilde W$ also lies in $\XX^{\div}$. Finally, defining $u=M\tilde\omega$, there exists a pressure function~$p$ such that $(u,p)$ is a $\frac{2\pi}{\tilde \Omega}$-periodic, real valued and analytic solution of Navier-Stokes equations~\eqref{eq:NS}.
\end{theorem}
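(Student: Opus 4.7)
My plan is to run a Banach fixed-point argument for the Newton-like operator $T(W)\bydef W-A\F(W)$ on $\B_\XX(\bW,r)$, then upgrade the abstract fixed point to a real-valued, divergence-free, analytic zero of $\F$, and finally invoke Lemma~\ref{lem:eq_NS_vorticity} to recover the pressure. The decomposition $DT(W) = (I-A\Adag) + A(\Adag-D\F(\bW)) + A(D\F(\bW)-D\F(W))$, combined with~\eqref{def:Z_0}--\eqref{def:Z_2}, yields $\|DT(W)\|_{B(\XX,\XX)}\leq Z_0+Z_1+Z_2 r$ for $W\in\B_\XX(\bW,r)$; integrating from $\bW$ to $W$ and using~\eqref{def:Y_0} then produces the self-mapping estimate
\[
\|T(W)-\bW\|_\XX \leq Y_0 + (Z_0+Z_1)r + \tfrac{Z_2}{2} r^2.
\]
Non-positivity of the resulting radii polynomial $\tilde p(r)\bydef \tfrac{Z_2}{2}r^2-(1-(Z_0+Z_1))r+Y_0$ is equivalent to $r$ lying between its two roots, the smaller of which is precisely the $r_{\min}$ in the theorem under~\eqref{hyp:cond_pol}; the Lipschitz bound $Z_0+Z_1+Z_2 r<1$ gives strict contraction for $r<r_{\max}$. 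Banach's theorem then delivers a unique fixed point $\tilde W$ of $T$ in each $\B_\XX(\bW,r)$, $r\in[r_{\min},r_{\max})$, and nesting of these balls makes the fixed point independent of $r$.

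To convert ``fixed point of $T$'' into ``zero of $\F$'' I would verify injectivity of $A$: on the tail, $A$ acts diagonally with nonzero entries $\lambda_n$ from~\eqref{e:deflambda}; on $\XX^\dag$, the hypothesis $Z_0<1$ implies via Neumann series that $A\Adag$ is invertible, so $A^{(N^\dag)}$ is bijective on the finite block. Hence $A\F(\tilde W)=0$ forces $\F(\tilde W)=0$. Realness then follows from uniqueness: Lemma~\ref{lem:FequivariantS}, together with $\gamma_*\bW=\bW$ and the $\gamma_*$-invariance of $\|\cdot\|_\XX$ (a consequence of its purely modulus-based definition), makes $\gamma_*\tilde W$ another zero of $\F$ in $\B_\XX(\bW,r)$; uniqueness gives $\gamma_*\tilde W=\tilde W$, i.e.\ $\tilde\omega$ is real-valued. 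Analyticity is immediate from $\eta>1$ via Remark~\ref{rem:analyticity}.

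The step I expect to be the main obstacle is proving $\tilde W\in\XX^\div$, because $A$ need not preserve $\XX^\div$ on the finite block, so one cannot simply restrict $T$ to the divergence-free subspace. I would work directly with the PDE instead. Set $u\bydef M\tilde\omega$ and $\psi\bydef\nabla\cdot\tilde\omega$; by construction $\nabla\cdot u=0$ and, by the hypothesis on $f$, $\nabla\cdot\rhs=0$. Taking the divergence of~\eqref{eq:F_physical_space} and invoking the relabeling identity $\sum_{l,m}\partial_l u^{(m)}\partial_m\omega^{(l)}=\sum_{l,m}\partial_l\omega^{(m)}\partial_m u^{(l)}$ gives the transport--diffusion identity
\[
\partial_t\psi + (u\cdot\nabla)\psi - \nu\Delta\psi = 0 \quad\text{on }\T^3\times\R.
\]
Multiplying by $\psi$, integrating over $\T^3$, and using $\nabla\cdot u=0$ to kill the transport contribution yield $\tfrac{d}{dt}\int_{\T^3}\psi^2=-2\nu\int_{\T^3}|\nabla\psi|^2\leq 0$. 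Time-periodicity then forces $\nabla\psi\equiv 0$, so $\psi$ is spatially constant; its spatial mean being zero then gives $\psi\equiv 0$. With $\tilde\omega\in\XX^\div$ and analyticity in hand, Lemma~\ref{lem:eq_NS_vorticity} supplies the pressure $p$ making $(u,p)$ a $\frac{2\pi}{\tilde\Omega}$-periodic solution of~\eqref{eq:NS}.
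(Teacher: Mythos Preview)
Your proof is correct, but it diverges from the paper's in two places. First, you run the contraction with the \emph{approximate} Newton map $T=I-A\F$, whereas the paper uses the \emph{exact} Newton map $T=I-D\F(\bW)^{-1}\F$, after first proving that $D\F(\bW)$ is invertible via a Neumann-series argument from $Z_0<1$ and $Z_0+Z_1<1$. The resulting radii polynomials collapse to the same inequality $\tfrac{Z_2}{2}r^2-(1-(Z_0+Z_1))r+Y_0\leq 0$, so existence and uniqueness come out identically. Second, and this is the substantive difference: the paper's reason for using the exact inverse is precisely the obstacle you flag, namely that $A$ need not preserve $\XX^\div$ on its finite block. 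Because $\F$ (hence $D\F(\bW)$ and its inverse) does preserve $\XX^\div$ by Lemma~\ref{lem:X_div}, the exact-inverse $T$ maps $\XX^\div$ to itself, and the fixed point is divergence-free \emph{for free}; see Remark~\ref{rem:th_radii_pol}. You instead establish divergence-freeness a posteriori via the energy estimate for the transport--diffusion equation satisfied by $\psi=\nabla\cdot\tilde\omega$. That argument is valid (the $n=0$ mode of $F$, absent from the zero-finding problem, contributes only a spatial constant to the residual, which vanishes under the divergence; and $\nabla\cdot f^\omega=0$ is automatic since $f^\omega=\nabla\times f$, not a consequence of any hypothesis on~$f$). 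Your route is more hands-on and avoids inverting $D\F(\bW)$; the paper's route is purely functional-analytic and, as Remark~\ref{rem:th_radii_pol} stresses, transfers cleanly to the symmetry-reduced setting of Theorem~\ref{th:radii_pol_sym}, where ensuring that the reduced approximate inverse preserves the divergence constraint would be considerably harder.
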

\begin{proof}
First notice that we have 
\begin{equation*}
\left\Vert I-A\Adag \right\Vert_{B(\XX,\XX)} \leq Z_0 <1,
\end{equation*}
hence $A\Adag$ is a bounded linear and invertible operator from $\XX$ to itself by a standard Neumann series argument. Besides, since $A$ and $\Adag$ have diagonal tails that are the exact inverse of one another, the above bound gives that the finite part of $A$ is invertible and thus $A^{-1}$ is a bounded linear operator from $\XX_{-2,-1}$ to $\XX$. We also have that
\begin{equation*}
\left\Vert I-AD\F(\bW) \right\Vert_{B(\XX,\XX)} \leq Z_0+Z_1 <1,
\end{equation*}
and therefore $Q \bydef AD\F(\bW)$ is a bounded linear and invertible operator from $\XX$ to itself. Furthermore, $\left\Vert Q^{-1}\right\Vert_{B(\XX,\XX)} \leq (1-(Z_0 + Z_1))^{-1}$. Hence, we finally have that
\begin{equation*}
D\F(\bW)=A^{-1}Q
\end{equation*}
is a bounded linear and invertible operator from $\XX_{-2,-1}$ to $\XX$.

We can thus consider
\begin{equation} \label{def:T}
T(W) \bydef W - D\F(\bW)^{-1}\F(W)
\end{equation}
which maps $\XX$ to itself. We are now going to show that $T$ is a contraction on $\B_{\XX}(\bW,r)$ for all $r\in[r_{\min},r_{\max})$. It is going to be helpful to introduce the polynomial
\begin{equation*}
P(r)\bydef \frac{1}{1-(Z_0+Z_1)}\frac{1}{2}Z_2 r^2 -r +\frac{1}{1-(Z_0+Z_1)}Y_0.
\end{equation*}
Notice that by~\eqref{hyp:cond_pol}, the quadratic polynomial $P$ has two positive roots, $r_{\min}$ being the smallest, and that $P(r_{\max})<0$ since $r_{\max}$ is the apex of $P$. We estimate, for $r>0$ and $W\in\B_{\XX}(\bW,r)$,
\begin{align*}
\left\Vert T(W)-\bW\right\Vert_{\XX} &\leq \left\Vert T(W)-T(\bW)\right\Vert_{\XX} + \left\Vert T(\bW)-\bW\right\Vert_{\XX} \\
&\leq \int_0^1\left\Vert DT(\bW+t(W-\bW))\right\Vert_{B(\XX,\XX)}dt \left\Vert W-\bW\right\Vert_{\XX} 
 + \left\Vert D\F(\bW)^{-1}\F(\bW)\right\Vert_{\XX} \\
&\leq \left\Vert Q^{-1}\right\Vert_{B(\XX,\XX)} 
\big( \int_0^1\left\Vert A\left(D\F(\bW+t(W-\bW))-D\F(\bW)\right)\right\Vert_{B(\XX,\XX)}dt \left\Vert W-\bW\right\Vert_{\XX} \\
& \qquad \qquad \qquad \qquad  + \left\Vert A\F(\bW)\right\Vert_{\XX} \big) \\
&\leq \left\Vert Q^{-1}\right\Vert_{B(\XX,\XX)} \left(Z_2 r^2 \int_0^1t dt + Y_0 \right) \\
&\leq \frac{1}{1-(Z_0+Z_1)}\left(\frac{1}{2}Z_2 r^2 + Y_0\right).
\end{align*}
For all $r\in[r_{\min},r_{\max}]$, we have $P(r)\leq 0$ and hence $\left\Vert T(W)-\bW\right\Vert_{\XX} \leq r$, that is $T$ maps $\B_{\XX}(\bW,r)$ into itself.

Furthermore, for $r>0$ and $W\in\B_{\XX}(\bW,r)$ we also have
\begin{align*}
\left\Vert DT(W)\right\Vert_{B(\XX,\XX)} &= \left\Vert I-D\F(\bW)^{-1}D\F(W)\right\Vert_{B(\XX,\XX)} \nonumber \\
&\leq \left\Vert Q^{-1}\right\Vert_{B(\XX,\XX)} \left\Vert A\bigl(D\F(\bW)-D\F(W)\bigr)\right\Vert_{B(\XX,\XX)} \nonumber \\
&\leq \frac{Z_2r}{1-(Z_0+Z_1)}. \label{e:2nd-rhs}
\end{align*}
Hence, from the definition of $r_{\max}$ it follows that  $\left\Vert DT(W)\right\Vert_{B(\XX,\XX)} < 1$ for any $r\in [0,r_{\max})$.

Thus we infer that $T$ is a contraction on $\B_{\XX}(\bW,r)$ for any $r\in[r_{\min},r_{\max})$. Banach's fixed point Theorem then yields the existence of a unique fixed point $\tilde W = (\tilde \Omega, \tilde \omega)$ of $T$ in $\B_{\XX}(\bW,r)$, for all $r\in[r_{\min},r_{\max})$, which corresponds to a unique zero of $\F$ since $D\F(\bW)^{-1}$ is injective.

In order to prove that $\tilde{\omega}$ is divergence free, we then make use of the fact that $\F(\XX^{\div})\subset \XX_{-2,-1}^{\div}$ (Lemma~\ref{lem:X_div}). Since $\XX_{-2,-1}^{\div}$ is a closed subspace of $\XX_{-2,-1}$, this implies that for all $W\in \XX^{\div}$, $D\F(W)\left(\XX^{\div}\right)\subset\XX_{-2,-1}^{\div}$. In particular, since $D\F(\bW)$ is invertible and $\bW\in \XX^{\div}$, we have that $D\F(\bW)^{-1}\left(\XX_{-2,-1}^{\div}\right)\subset\XX^{\div}$, and hence that $T(\XX^{\div})\subset \XX^{\div}$. Therefore, $T$ is also a contraction on $\B_{\XX^{\div}}(\bW,r)\bydef\B_{\XX}(\bW,r)\cap \XX^{\div}$ and we now get the existence of a unique fixed point $\tilde W$ of $T$ in $\B_{\XX^{\div}}(\bW,r)$ (which must be the same as the one obtained previously by uniqueness).

Finally, concerning real-valuedness, by Lemma~\ref{lem:FequivariantS} we have $\F(\gamma_*\tilde W)=\gamma_*\F(\tilde W)=0$, and since $\B_{\XX}(\bW,r)$ is invariant under $\gamma_*$, this ``new'' zero $\gamma_*\tilde W$ of $\F$ also belongs to $\B_{\XX}$. By uniqueness we must have $\gamma_*\tilde W=\tilde W$, which means that that $\Omega\in\R$ and that the functions associated to $\tilde\omega$ and $u$ are real-valued.

The function associated to $\tilde\omega$ is therefore divergence free, analytic since $\eta>1$, and $\Omega\in\R$. By Lemma~\ref{lem:eq_NS_vorticity}, $u$ then solves Navier-Stokes equations.
\end{proof}

\begin{remark}
\label{rem:th_radii_pol}
Many similar versions of this theorem have been used in the last decades, in a
posteriori error analysis and computer-assisted proofs (see for
instance~\cite{CalRap97,Plu01,Yamamoto1998,AriKocTer05,DayLesMis07}). One possible approach (which
is maybe the most natural one) to show that the operator $T$ defined
in~\eqref{def:T} is a contraction from a small ball around $\bW$ into itself,
is to directly estimate
\begin{equation*}
\left\Vert \F(\bW) \right\Vert_{\XX_{-2,-1}}, ~~ \left\Vert D\F(\bW)^{-1} \right\Vert_{B(\XX_{-2,-1},\XX)} ~~ \text{and}
~~ \sup_{W\in\B(\bW,r)}\left\Vert D\F(W)-D\F(\bW) \right\Vert_{B(\XX,\XX_{-2,-1})}
\end{equation*}
instead of~\eqref{def:Y_0}-\eqref{def:Z_2}. The main difficulty of this
approach in practice is to obtain a bound for the inverse $D\F(\bW)^{-1}$,
which can sometimes be done using eigenvalue enclosing techniques
(see~\cite{Plu01} and the references therein). Another possibility is to
replace $D\F(\bW)^{-1}$ by an approximate inverse $A$ of $D\F(\bW)$, and to
study the fixed point operator
\begin{equation*}
T_{\approximate} \bydef I-A\F
\end{equation*}
instead of $T$. Estimating the quantities in~\eqref{def:Y_0}-\eqref{def:Z_2} is
then a good way to prove that $T_{\approximate}$ is a contraction from a small
ball around $\bW$ into itself (see for instance~\cite{Yamamoto1998,DayLesMis07}). The
price to pay for this approach is that one has to actually compute (partially
numerically) a good enough approximate inverse $A$ of $D\F(\bW)$, but
estimating the norm of $A$ then becomes very straightforward, compared to
having to work with the exact inverse $D\F(\bW)^{-1}$.

What Theorem~\ref{th:radii_pol} shows is that, when an approximate inverse $A$
is used and when the estimates~\eqref{def:Y_0}-\eqref{def:Z_2} are good enough
to show that $T_{\approximate}$ is a contraction from $\B(\bW,r)$ into itself
for some $r>0$, then the same is actually true for $T$. This observation may
seem inconsequential, as what we really care about is the existence of a zero
of $\F$ near $\bW$, rather than which fixed point operator was used to prove this
fact (since both give the same error bound with this approach). However, it
turns out to be very advantageous in this work. Indeed, to show that a zero $W$
of $\F$ corresponds to a solution of Navier-Stokes equations, we need to know a
priori that $\omega$ is divergence free, see Lemma~\ref{lem:eq_NS_vorticity}.
Since $\F$ (and thus $D\F(\bW)^{-1}$) preserves the divergence free property,
so does $T$, and hence we were able to obtain \emph{for free} that our fixed
point is divergence free (by making sure that the numerical approximate solution $\bW$ itself is divergence free). To obtain the same result with $T_{\approximate}$, we
would have to make sure that the approximate inverse $A$ also preserves
\emph{exactly} the divergence free property. For the approximate inverse $A$
described in Section~\ref{s:AAdag} this could likely be achieved, albeit at a considerable computational cost. 
However, as mentioned in the introduction, in practice we work with symmetry
reduced variables (see Section~\ref{s:symmetries}). Therefore we use a
symmetrically reduced version of $A$ instead of $A$ itself. Making sure that
this \emph{symmetrically reduced} version of $A$ preserves the divergence free
property without directly having access to $A$ is a formidable task, which we
are able to avoid by working with $T$ instead of $T_{\approximate}$.
\end{remark}

\begin{remark}
\label{rem:comparison_of_norms}
We note that the weighted $\ell^1$-norm controls the $\CC^0$-norm. Therefore, when the assumptions of Theorem~\ref{th:radii_pol} are satisfied, we have the following explicit error control between the \emph{exact} vorticity $\tilde \omega$ and the \emph{approximate} vorticity $\bw$:
\begin{equation*}
\sup\limits_{\substack{t\in\R \\ x\in\T^3}} \sum_{m=1}^3\left\vert \tilde\omega^{(m)}(x,t)-\bar{\omega}^{(m)}(x,\theta t) \right\vert \leq \left\Vert \tilde{W} - \bW \right\Vert_\XX \leq r_{\min},
\end{equation*}
where $\theta=\tilde\Omega / \bar\Omega$ accounts for the time dilation that occurs when the approximate period is not exactly equal to the true period. We also point out that, for $\eta>1$, we could also obtain explicit error estimates on derivatives of the vorticity. In Lemma~\ref{lem:error_estimates} corresponding error bounds on the velocity field and the pressure are presented.
\end{remark}

While Theorem~\ref{th:radii_pol} is the cornerstone of our approach, the main difficulty still lies ahead of us: we need to derive and implement explicit bounds satisfying~\eqref{def:Y_0}-\eqref{def:Z_2}, that are sharp enough for~\eqref{hyp:cond_pol} to hold. These bounds are obtained in Section~\ref{s:estimates}. However, the computations required to evaluate them are quite prohibitive due to the high dimension of the problem. Therefore, in Section~\ref{s:symmetries} we make use of the symmetries of the solution to reduce the amount of computation needed, and update the bounds obtained in Section~\ref{s:estimates} accordingly, by showing that they are in a certain precise sense \emph{compatible} with the symmetries. The implementation in a MATLAB code of the bounds in the symmetric setting can be found at~\cite{navierstokescode}, and the results are discussed in Section~\ref{s:application}.

\section{Estimates without using symmetries}
\label{s:estimates}

In this section, we derive bounds $Y_0$, $Z_0$, $Z_1$ and $Z_2$ satisfying~\eqref{def:Y_0}-\eqref{def:Z_2}. We first list a few auxiliary lemmas that are going to be used several times, and then devote one subsection to each bound. We assume throughout that the approximate solution $\bW$ only has a finite number of non-zero modes. More precisely, writing $\bW=(\bO,\bw) \in \C\times\C^{\Z^4_*}$, we assume there exists a finite set of indices $\set^{\sol}\subset \Z^4_*$ such that 
\begin{equation}\label{e:defSsol}
  \bw_n=0 \qquad\text{for all }n\notin\set^{\sol}.
\end{equation}
We also assume that $\hat\omega$ used in the phase condition~\eqref{eq:phase_condition} is chosen so that $\hat{\omega}_n=0$ for all $n\notin \set^\sol$, that $\N^\dag\in N\setminus\{0\}$ is fixed, and recall that $\E^\dag$ is defined in Section~\ref{s:AAdag}.

\subsection{Uniform estimates involving \boldmath$\lambda_n$\unboldmath}
\label{sec:uniform_estimates}

We regroup here some straightforward lemmas that are going to facilitate estimates involving the tail part of $A$, in Sections~\ref{s:Z1bound} and~\ref{s:Z2bound}. Recall that $\E(N)$ was introduced in Definition~\ref{def:ball_coef} and $\lambda_n$ in~\eqref{e:deflambda}.

\begin{lemma}
\label{lem:C}
For all $N\in\N\setminus\{0\}$
\begin{equation*}
\sup\limits_{n\notin\E(N)} \left\vert\lambda_n \right\vert \leq \frac{1}{N}.
\end{equation*}
\end{lemma}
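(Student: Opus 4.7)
The statement is essentially an immediate consequence of unpacking the definitions, so the plan is short. Recalling that $\lambda_n = (\nu\tn^2 + i\bar\Omega n_4)^{-1}$ from~\eqref{e:deflambda} and $\mu(n) = |\nu\tn^2 + i\bar\Omega n_4|$ from Definition~\ref{def:ball_coef}, we simply have $|\lambda_n| = 1/\mu(n)$.

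The set $\E(N) = \{n\in\Z^4_* : \mu(n)\le N\}$ is defined precisely so that its complement in $\Z^4_*$ consists of the indices with $\mu(n) > N$. Hence for any $n\notin \E(N)$,
\begin{equation*}
|\lambda_n| \;=\; \frac{1}{\mu(n)} \;<\; \frac{1}{N},
\end{equation*}
and taking the supremum over all such $n$ yields $\sup_{n\notin \E(N)}|\lambda_n|\le 1/N$, which is the stated bound.

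There is no real obstacle here; the lemma is a bookkeeping step used later to dominate the tail part of the operator $A$ (which acts diagonally by $\lambda_n$ outside $\E^\dag$) in the $Z_1$ and $Z_2$ estimates. The only subtlety worth flagging is that one must verify that $\bar\Omega \in \R$ (so that $\mu(n)$ is genuinely the modulus of the complex number $\nu\tn^2 + i\bar\Omega n_4$, with real and imaginary parts $\nu\tn^2$ and $\bar\Omega n_4$), which is implicit in the setup since $\bar W$ is taken to be a real numerical approximation. No convergence considerations or estimates on sums are needed: the bound is pointwise and uniform in $n$.
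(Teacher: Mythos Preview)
Your proof is correct and follows exactly the same approach as the paper: both simply observe that $|\lambda_n| = 1/\mu(n)$, so that $n\notin\E(N)$ (i.e.\ $\mu(n)>N$) immediately gives $|\lambda_n|<1/N$. The remark about $\bar\Omega\in\R$ is a reasonable aside, though not strictly needed since $\mu(n)$ is defined as the modulus $|\nu\tn^2+i\bar\Omega n_4|$ regardless.
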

\begin{proof}
Simply notice that $\mu(n)= \frac{1}{\left\vert\lambda_n\right\vert}$.
\end{proof}
\begin{lemma}
\label{lem:C123}
For all $N\in\N\setminus\{0\}$ and all $m\in\{1,2,3\}$,
\begin{equation*}
\sup\limits_{n\notin\E(N)} \left\vert\lambda_n \right\vert\vert n_m\vert  \leq \frac{1}{\sqrt{\nu N}}.
\end{equation*}
\end{lemma}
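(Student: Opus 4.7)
The plan is to combine two elementary lower bounds on $\mu(n)$ multiplicatively, exactly in the spirit used by Lemma~\ref{lem:C}, but now exploiting that the real part of $\nu\tn^2 + i\bar\Omega n_4$ controls a power of $|n_m|$.

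First I would unpack the definitions: by \eqref{e:deflambda} and Definition~\ref{def:ball_coef}, $|\lambda_n| = 1/\mu(n)$ where $\mu(n) = |\nu\tn^2 + i\bar\Omega n_4|$. Taking the real part immediately gives the first bound
\[
\mu(n) \;\geq\; \nu\tn^2 \;\geq\; \nu\, n_m^2 \qquad\text{for any } m\in\{1,2,3\}.
\]
Second, the very definition of $\E(N)$ gives that $n\notin \E(N)$ implies $\mu(n) > N$.

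The key step is then to multiply these two bounds rather than use either one alone: writing $\mu(n)^2 \geq N \cdot \nu\, n_m^2$, we obtain $\mu(n) \geq \sqrt{\nu N}\,|n_m|$. Dividing,
\[
|\lambda_n|\,|n_m| \;=\; \frac{|n_m|}{\mu(n)} \;\leq\; \frac{1}{\sqrt{\nu N}},
\]
which is uniform in $n\notin \E(N)$ and yields the desired supremum. This multiplicative trick is essentially the only idea needed; there is no genuine obstacle, since the estimate is tight (up to the strict/non-strict inequality coming from the definition of $\E(N)$) and the bound $\mu(n)\geq \nu n_m^2$ is sharp whenever $n_4=0$ and $\tn$ has one coordinate dominant.
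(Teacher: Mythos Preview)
Your proof is correct and is essentially identical to the paper's: both use $\mu(n)\geq \nu n_m^2$ together with $\mu(n)>N$ for $n\notin\E(N)$, combined multiplicatively to get $|\lambda_n|\,|n_m|\leq 1/\sqrt{\nu\mu(n)}\leq 1/\sqrt{\nu N}$. The only difference is cosmetic ordering of the two inequalities.
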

\begin{proof}
We note that
\begin{equation*}
\vert n_m\vert \leq \sqrt{\frac{\mu(n)}{\nu}},
\end{equation*}
hence
\begin{equation*}
\left\vert\lambda_n \right\vert \vert n_m\vert \leq \frac{1}{\sqrt{\nu \mu(n)}}. \qedhere
\end{equation*}
\end{proof}

\begin{lemma}
\label{lem:C123_bis}
For all $N\in\N\setminus\{0\}$
\begin{equation*}
\sup\limits_{n\notin\E(N)} \left\vert\lambda_n \right\vert\sum_{m=1}^3 \vert n_m\vert   \leq \frac{\sqrt 3}{\sqrt{\nu N}}.
\end{equation*}
Besides, for all $p\in\{1,2,3\}$, we also have
\begin{equation*}
\sup\limits_{n\notin\E(N)} \left\vert\lambda_n \right\vert\sum_{\substack{1\leq m\leq 3 \\ m\neq p}}\vert n_m\vert \leq \frac{\sqrt 2}{\sqrt{\nu N}}.
\end{equation*}
\end{lemma}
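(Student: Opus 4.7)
The plan is to use Cauchy--Schwarz on the sum $\sum_m |n_m|$ to reduce the problem to estimating $\sum_m n_m^2 = \tn^2$, and then to combine this with the trivial observation that $\nu \tn^2 \leq \mu(n)$, which already underpinned the proof of Lemma~\ref{lem:C123}.

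More precisely, I would first note that since $\mu(n) = |\nu \tn^2 + i \bO n_4|$ has real part $\nu \tn^2 \geq 0$, we have $\nu \tn^2 \leq \mu(n)$, so
\begin{equation*}
n_1^2 + n_2^2 + n_3^2 \leq \frac{\mu(n)}{\nu}.
\end{equation*}
Applying Cauchy--Schwarz to the sum of three terms yields
\begin{equation*}
\left(\sum_{m=1}^3 |n_m|\right)^2 \leq 3 \sum_{m=1}^3 n_m^2 \leq \frac{3 \mu(n)}{\nu},
\end{equation*}
so that $\sum_{m=1}^{3} |n_m| \leq \sqrt{3 \mu(n)/\nu}$. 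Multiplying by $|\lambda_n| = 1/\mu(n)$ gives
\begin{equation*}
|\lambda_n| \sum_{m=1}^3 |n_m| \leq \frac{\sqrt{3}}{\sqrt{\nu \mu(n)}},
\end{equation*}
and the first bound then follows by taking the supremum over $n\notin \E(N)$ (where $\mu(n) \geq N$ by Definition~\ref{def:ball_coef}).

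For the second bound, the same Cauchy--Schwarz argument, restricted to only two indices $m \neq p$, gives the factor $\sqrt{2}$ instead of $\sqrt{3}$:
\begin{equation*}
\left(\sum_{\substack{1\leq m\leq 3 \\ m\neq p}} |n_m|\right)^2 \leq 2 \sum_{\substack{1\leq m\leq 3 \\ m\neq p}} n_m^2 \leq 2 \tn^2 \leq \frac{2 \mu(n)}{\nu}.
\end{equation*}
There is no real obstacle here; the lemma is a direct sharpening of Lemma~\ref{lem:C123}, where the point is simply that treating the sum $|n_1|+|n_2|+|n_3|$ collectively (via Cauchy--Schwarz and the identity $\tn^2 = n_1^2 + n_2^2 + n_3^2$) yields a better constant than applying Lemma~\ref{lem:C123} three times, which would give $3/\sqrt{\nu N}$ instead of $\sqrt{3}/\sqrt{\nu N}$.
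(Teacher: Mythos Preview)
Your proof is correct and follows essentially the same approach as the paper: Cauchy--Schwarz on the sum of $|n_m|$'s, followed by $\nu\tn^2 \leq \mu(n)$ and the definition of $\E(N)$. The only cosmetic difference is that the paper writes the Cauchy--Schwarz step as $\sum|n_m|\leq\sqrt{3\sum n_m^2}$ directly rather than squaring first.
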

\begin{proof}
Just notice that 
\begin{align*}
\sum_{m=1}^3 \vert n_m\vert &\leq \sqrt{3\sum_{m=1}^3  n_m^2} 
\leq \sqrt{\frac{3\mu(n)}{\nu}},
\end{align*}
where the first inequality follows from the Cauchy-Schwarz inequality. Therefore
\begin{equation*}
\left\vert\lambda_n \right\vert \sum_{m=1}^3 \vert n_m\vert \leq \frac{\sqrt 3}{\sqrt{\nu \mu(n)}}.
\end{equation*}
The second estimate is obtained similarly, using that
\begin{equation*}
\sum_{\substack{1\leq m\leq 3 \\ m\neq p}}\vert n_m\vert \leq \sqrt{2\sum_{\substack{1\leq m\leq 3 \\ m\neq p}} n_m^2} \leq \sqrt{2\sum_{m=1}^3  n_m^2}. \qedhere
\end{equation*}
\end{proof}

\begin{lemma}
\label{lem:C_inf}
For all $N\in\N\setminus\{0\}$
\begin{equation*}
\sup\limits_{n\notin\E(N)} \left\vert\lambda_n \right\vert \vert n\vert_\infty \leq \max\left(\frac{1}{\sqrt{\nu N}},\frac{1}{\bO}\right).
\end{equation*}
\end{lemma}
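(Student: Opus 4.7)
The plan is to split into two cases according to which coordinate achieves the maximum in $|n|_\infty = \max_{1\le j\le 4}|n_j|$, and use the two lower bounds on $\mu(n) = |\nu\tilde n^2 + i\bar\Omega n_4|$ that come from its real and imaginary parts respectively.

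First I would recall that $|\lambda_n| = 1/\mu(n)$, and that from the real part of $\nu\tilde n^2 + i\bar\Omega n_4$ one has $\mu(n) \geq \nu n_j^2$ for every $j\in\{1,2,3\}$, while from the imaginary part one has $\mu(n) \geq \bar\Omega |n_4|$. The constraint $n\notin \E(N)$ additionally gives $\mu(n) > N$.

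If the maximum $|n|_\infty$ is attained by some $n_j$ with $j\in\{1,2,3\}$, then by Lemma~\ref{lem:C123} (or directly, since $|n_j| \leq \sqrt{\mu(n)/\nu}$) I get
\[
|\lambda_n|\,|n|_\infty = \frac{|n_j|}{\mu(n)} \leq \frac{1}{\sqrt{\nu \mu(n)}} \leq \frac{1}{\sqrt{\nu N}}.
\]
If instead $|n|_\infty = |n_4|$ (with necessarily $n_4\neq 0$, since otherwise $|n|_\infty = 0$ and there is nothing to prove), then using $\mu(n) \geq \bar\Omega |n_4|$ yields
\[
|\lambda_n|\,|n|_\infty = \frac{|n_4|}{\mu(n)} \leq \frac{|n_4|}{\bar\Omega |n_4|} = \frac{1}{\bar\Omega}.
\]
Taking the maximum of the two cases gives the stated bound, uniformly over $n\notin \E(N)$.

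There is essentially no obstacle here: the lemma is a direct consequence of the definition of $\mu(n)$ and a two-case split, very much in the spirit of Lemmas~\ref{lem:C}--\ref{lem:C123_bis}. The only mild care needed is to observe that the case $n_4 = 0$ is harmless because it is already covered by the first case (any nonzero $n$ with $n_4=0$ has its infinity norm attained on a spatial coordinate).
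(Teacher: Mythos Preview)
Your proof is correct and takes essentially the same approach as the paper: a two-case split on whether $|n|_\infty$ is attained by a spatial coordinate (handled via Lemma~\ref{lem:C123}) or by $n_4$ (handled via $\mu(n)\geq \bO |n_4|$). You simply spell out the $n_4$ case in slightly more detail than the paper does.
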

\begin{proof}
Let $n\notin\E(N)$. If $\vert n\vert_\infty=\vert n_4\vert$, then clearly
\begin{equation*}
\left\vert\lambda_n \right\vert \vert n\vert_\infty \leq \frac{1}{\bO}.
\end{equation*}
Otherwise, $\vert n\vert_\infty=\vert n_m\vert$ for $m\in\{1,2,3\}$ and we conclude by Lemma~\ref{lem:C123}.
\end{proof}

\subsection{\boldmath$Y_0$\unboldmath~bound for \boldmath$A\F(\bar W)$\unboldmath}
\label{s:Y0bound}

We start by giving a computable bound for $\left\Vert  A\F(\bar W)\right\Vert_{\XX}$.
 
\begin{proposition}
\label{prop:Y0}
Assume that $\rhs_n=0$ for $n \notin \set^\sol+\set^\sol \bydef \{n_1+n_2\ |\ n_1,n_2\in \set^\sol\}$, and that $A$ is defined as in Section~\ref{s:AAdag}. Then~\eqref{def:Y_0} is satisfied with
\begin{align*}
Y_0 &\bydef \left\Vert  A^{(N^\dag)} \Pi^\dag\F(\bW)\right\Vert_{\XX} 
+ \sum_{n\in (\set^\sol+\set^\sol)\setminus \E^\dag} \sum_{m=1}^3 \vert\lambda_n\vert \left\vert F^{(m)}_n(\bW) \right\vert \eta^{\vert n\vert_1} \\
&= \left\vert \left( A^{(N^\dag)} \Pi^\dag \mathcal{F}(\bW) \right)_{\phase} \right\vert   + \sum_{n\in \E^\dag} \sum_{m=1}^3 \left\vert \left(  A^{(N^\dag)} \Pi^\dag \mathcal{F}(\bW) \right)^{(m)}_n \right\vert \eta^{\vert n\vert_1} \\
& \quad
+ \sum_{n\in (\set^\sol+\set^\sol)\setminus \E^\dag} \sum_{m=1}^3 \vert\lambda_n\vert \left\vert F^{(m)}_n(\bW) \right\vert \eta^{\vert n\vert_1}.
\end{align*}
\end{proposition}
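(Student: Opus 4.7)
The plan is to exploit the block structure of $A$---a finite-dimensional numerical inverse on the head $\Pi^\dag \XX$, and diagonal multiplication by $\lambda_n$ on the tail---together with the fact that $\F(\bW)$ has finite Fourier support, to reduce $\|A\F(\bW)\|_\XX$ to quantities that can be evaluated by rigorous interval arithmetic.

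First I would split $\F(\bW) = \Pi^\dag \F(\bW) + (I-\Pi^\dag)\F(\bW)$ and apply the triangle inequality. By the construction of $A$ in Section~\ref{s:AAdag}, the head part satisfies $A \Pi^\dag \F(\bW) = A^{(N^\dag)} \Pi^\dag \F(\bW)$, which is a finite-dimensional vector in $\XX^\dag$; its $\XX$-norm can therefore be evaluated directly and produces the first summand of $Y_0$.

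For the tail, the definition of $A$ gives $\bigl(A(I-\Pi^\dag)\F(\bW)\bigr)_n = \lambda_n F_n(\bW)$ for $n \notin \E^\dag$, while the phase component of $(I-\Pi^\dag)\F(\bW)$ vanishes. The key observation, and the only nontrivial ingredient, is a support argument: since $\bw_n=0$ for $n \notin \set^\sol$, the linear terms $(i\bO n_4 + \nu \tn^2)\bw_n$ of $F_n(\bW)$ vanish outside $\set^\sol$, the nonlinear contributions $\Psi_n(\bw)$ from~\eqref{e:defPsi} are convolutions of $\bw$ with itself (dressed by the symbol of $M$ and by the derivative operators $D_m$), so their support lies in $\set^\sol + \set^\sol$, and by assumption the forcing $\rhs$ is also supported in $\set^\sol + \set^\sol$. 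Consequently $F_n(\bW)=0$ for every $n \notin \set^\sol + \set^\sol$, so the tail sum collapses to a finite sum over $(\set^\sol+\set^\sol)\setminus \E^\dag$. Expanding the weighted $\ell^1_\eta$ norm then yields exactly the second summand of $Y_0$.

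Combining the two contributions gives the first displayed expression for $Y_0$; the second displayed expression is simply the component-by-component expansion of $\|A^{(N^\dag)}\Pi^\dag \F(\bW)\|_\XX$ according to the definition of the $\XX$-norm. The proof therefore contains no serious analytic obstacle---it is essentially bookkeeping---and the main practical cost is not in the proof but in the rigorous evaluation of the finite quantity $A^{(N^\dag)}\Pi^\dag \F(\bW)$ and of the finitely many tail coefficients $\lambda_n F_n(\bW)$ with interval arithmetic.
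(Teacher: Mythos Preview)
Your proof is correct and follows essentially the same approach as the paper: split via $\Pi^\dag$, use the block structure of $A$, and observe that $F_n(\bW)$ vanishes outside $\set^{\sol}+\set^{\sol}$ because $F$ is quadratic in $\bw$ and $\rhs$ is supported there by assumption. The only cosmetic difference is that the paper writes the head/tail splitting as an exact equality (since the two pieces have disjoint support in the weighted $\ell^1$ norm and $A$ leaves both $\XX^\dag$ and $(I-\Pi^\dag)\XX$ invariant), whereas you invoke the triangle inequality; either way one arrives at the same $Y_0$.
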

\begin{proof}
The only thing to notice is that the approximate solution $\bW$ only has a finite number of non-zero modes, hence the same is true for $\F(\bar W)$. More precisely, since $F$ is quadratic we infer that $F_n(\bW)=0$ for all $n\notin \set^\sol+\set^\sol$. Recalling the definition of $A$ and using the splitting
\begin{align*}
\left\Vert  A\F(\bar W)\right\Vert_{\XX} &= \left\Vert  \Pi^\dag A\F(\bar W)\right\Vert_{\XX} + \left\Vert  (I -\Pi^\dag)A \F(\bar W)\right\Vert_{\XX} \\
&= \left\Vert A \Pi^\dag\F(\bar W)\right\Vert_{\XX} + \left\Vert A (I -\Pi^\dag)\F(\bar W)\right\Vert_{\XX},
\end{align*}
then directly yields $Y_0$.
\end{proof}

\subsection{\boldmath${Z_0}$\unboldmath~bound for \boldmath${I-A\Adag}$\unboldmath}
\label{s:Z0bound}

We now give a computable bound for $\left\Vert  I-A\Adag\right\Vert_{B(\XX,\XX)}$.
\begin{proposition}
\label{prop:Z0}
Assume that $\Adag$ and $A$ are defined as in Section~\ref{s:AAdag}. Then~\eqref{def:Z_0} is satisfied with
\begin{equation*}
Z_0\bydef\left\Vert  I-A^{(N^\dag)} D\F^\dag|_{\XX^\dag}(\bW)\right\Vert_{B(\XX,\XX)}.
\end{equation*}
\end{proposition}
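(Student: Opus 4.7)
The plan is to exploit the block structure of both $A$ and $\Adag$ induced by the splitting $\XX = \XX^\dag \oplus (I-\Pi^\dag)\XX$, and to show that $I - A\Adag$ vanishes entirely on the tail subspace $(I-\Pi^\dag)\XX$ while on $\XX^\dag$ it reduces to the finite-dimensional expression $I - A^{(N^\dag)} D\F^\dag|_{\XX^\dag}(\bW)$ that appears in the definition of $Z_0$.

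First, I would unwind the definitions from Section~\ref{s:AAdag}. Both $A$ and $\Adag$ are constructed so as to leave the two subspaces $\XX^\dag$ and $(I-\Pi^\dag)\XX$ invariant. On the tail subspace $(I-\Pi^\dag)\XX$, the operator $\Adag$ acts as multiplication by the diagonal symbol $\nu\tn^2 + i\bO n_4$, while $A$ acts as multiplication by $\lambda_n = (\nu\tn^2 + i\bO n_4)^{-1}$ (see~\eqref{e:deflambda}). Consequently, for every $W \in (I-\Pi^\dag)\XX$ the diagonal entries cancel exactly and $A\Adag W = W$, so $(I - A\Adag)(I - \Pi^\dag) = 0$.

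Next, on the finite-dimensional subspace $\XX^\dag$, the definition of $\Adag$ gives $\Adag \Pi^\dag W = \Pi^\dag D\F(\bW)|_{\XX^\dag}\Pi^\dag W$, which again lies in $\XX^\dag$, so applying $A$ invokes only the numerical inverse $A^{(N^\dag)}$. Hence
\[
A\Adag \Pi^\dag W = A^{(N^\dag)} \Pi^\dag D\F(\bW)|_{\XX^\dag} \Pi^\dag W = A^{(N^\dag)} D\F^\dag|_{\XX^\dag}(\bW)\, \Pi^\dag W,
\]
and therefore $(I - A\Adag)\Pi^\dag = \bigl(I - A^{(N^\dag)} D\F^\dag|_{\XX^\dag}(\bW)\bigr)\Pi^\dag$.

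Finally, adding the contributions from the two invariant subspaces and noting that the tail contribution is zero, for any $W \in \XX$ we obtain
\[
(I - A\Adag)W = \bigl(I - A^{(N^\dag)} D\F^\dag|_{\XX^\dag}(\bW)\bigr)\Pi^\dag W.
\]
Taking the $\XX$-norm on both sides and using $\|\Pi^\dag W\|_\XX \le \|W\|_\XX$ (an immediate consequence of the definition of $\|\cdot\|_\XX$ and $\Pi^\dag$) yields the claimed bound. Since this is essentially a bookkeeping argument with no genuine analytic obstacle, I anticipate the only subtlety to be the consistent interpretation of the operator $A^{(N^\dag)} D\F^\dag|_{\XX^\dag}(\bW)$ as a bounded operator on $\XX$ (extended by zero off $\XX^\dag$), which is what allows $Z_0$ to be written as a $B(\XX,\XX)$-norm rather than a norm on the projected finite-dimensional space.
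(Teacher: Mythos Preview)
Your proposal is correct and follows essentially the same route as the paper's proof: both exploit that the tails of $A$ and $\Adag$ are exact inverses, so $I-A\Adag$ is supported entirely on the finite block $\XX^\dag$, where it reduces to $I-A^{(N^\dag)}\Pi^\dag D\F(\bW)|_{\XX^\dag}$. The paper states this a bit more tersely via the two identities $(I-\Pi^\dag)(I-A\Adag)=0$ and $(I-A\Adag)(I-\Pi^\dag)=0$, but the content is identical.
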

\begin{proof}
We recall that we defined $\Adag$ and $A$ in such a way that their \emph{tails} are \emph{exact} inverses of each other, that is
\begin{equation*}
\left(I-\Pi^\dag\right)\left(I-A\Adag\right)=0,\quad \text{and}\quad \left(I-A\Adag\right)\left(I-\Pi^\dag\right)=0.
\end{equation*}
Therefore, the only non-zero part of $I-A\Adag$ is the \emph{finite} part $\Pi^\dag \left(I-A\Adag\right) \Pi^\dag$, which yields~$Z_0$.
\end{proof}
Besides, for a linear operator $B:\XX\to\XX$, the operator norm of $B$ is nothing but the supremum of the norm of each of its column, with a weight, since we use a weighted $\ell^1$ norm on~$\XX$:
\begin{align*}
\left\Vert B\right\Vert_{B(\XX,\XX)} = \max\left[\left\Vert B^{(.,\phase)} \right\Vert_{\XX},\max\limits_{1\leq m\leq 3} \sup\limits_{n\in\Z^4_*} \frac{1}{\eta^{\vert n\vert_1}}\left\Vert B^{(.,m)}_{.,n}\right\Vert_{\XX}\right].
\end{align*}
Therefore, if $B$ only has a finite number of non-zero \emph{columns} $B^{(.,m)}_{.,n}$, and if each of these columns only has a finite number of non-zero terms, (which is the case for the operator involved in $Z_0$) we can evaluate such a norm on a computer. 

\subsection{\boldmath${Z_1}$\unboldmath~bound for \boldmath${A\bigl(D\F(\bW)-\Adag\bigr)}$\unboldmath}
\label{s:Z1bound}

In this section, we give a computable bound for $\bigl\Vert  A\bigl(D\F(\bW)-\Adag\bigr)\bigr\Vert_{B(\XX,\XX)}$. Because of the way $\Adag$ and $A$ are defined, and since we only consider here the derivative of $\F$ at a numerical solution~$\bW$ (which only has a finite number of nonzero coefficients), each column of $A\bigl(D\F(\bW)-\Adag\bigr)$ also only has a finite number of nonzero coefficients. Therefore, we can numerically evaluate the norm of any finite number of columns of $A\bigl(D\F(\bW)-\Adag\bigr)$. Our strategy to obtain the $Z_1$ bound is thus to compute the norm of a finite (but large enough) number of columns of $A\bigl(D\F(\bW)-\Adag\bigr)$, and to then get an analytic estimate for the remaining columns. 
To describe for which columns we compute the norm explicitly, we introduce the following set of indices.
\begin{definition}
\label{def:tildeSbis}
Let $\tilde N\in\N$. We define the set $\tildeset^{\sol}(\tilde N)$ by
\begin{equation}\label{e:deftildeSsol}
\tildeset^{\sol}(\tilde N) \bydef \E(\tilde N)+ \set^{\sol},
\end{equation}
\end{definition}
\begin{remark}
\label{rem:deftildeSbis}
In the sequel, we will have to estimate quantities such as
\begin{equation}
\label{e:bound_lambda}
\sup_{n\notin \set,\, k\in\set^\sol} \vert \lambda_{n+k}\vert,
\end{equation}
for a given set $\set$. By definition~\eqref{e:deftildeSsol} of $\tildeset^{\sol}(\tilde N)$, and assuming $-\set^\sol=\set^\sol$, we have
\begin{equation*}
\left(\tildeset^{\sol}(\tilde N)\right)^c + \set^\sol \subset \left(\E(\tilde N)\right)^c,
\end{equation*}
hence for $\set=\tildeset^{\sol}(\tilde N)$ we can bound~\eqref{e:bound_lambda} by
\[
  \sup_{n \notin \tildeset^{\sol}(\tilde N) ,\, k\in\set^\sol} \vert \lambda_{n+k}\vert \leq  \sup_{n \notin \E(\tilde N)} \vert \lambda_{n}\vert  ,
\]
and we can thus use the estimates of Section~\ref{sec:uniform_estimates} to control such terms.
\end{remark}
\begin{remark}\label{r:NdagNtilde}
Whereas the computational parameter $N^\dag$ determines the size of the ``computational/finite'' part $A^{(N^\dag)}$ of the linear operator $A$, the computational parameter $\tilde{N}$ can be chosen (for fixed choice of $N^\dag$) to balance the quality of the estimates  and the computational costs. We will always 
need to choose $\tilde N$ so that $\E(\tilde N)$ contains $\E^\dag=\E(N^\dag)$, i.e. take $\tilde N\geq N^\dag$, to ensure that the finite part of $A$ does not influence the tail estimate, see equation~\eqref{eq:Z1tail_Alambda} below.
\end{remark}
\begin{proposition}
\label{prop:Z1}
Assume that $\bar{W} \in \XX^{\div}$, $\E^\dag \subset \E(\tilde{N})$, and that $\Adag$ and $A$ are defined as in Section~\ref{s:AAdag}. Define $C = D\F(\bW)-\Adag$. Then~\eqref{def:Z_1} is satisfied with
\begin{equation*}
Z_1\bydef\max\left(\left\Vert AC^{(.,\phase)}\right\Vert_{\XX}, \max_{1 \leq m \leq 3}
\left(Z_1^{\textup{finite}}\right)^{(m)},
\max_{1 \leq m \leq 3}
\left(Z_1^{\textup{tail}}\right)^{(m)}\right), 
\end{equation*}
where
\begin{equation*}
\left(Z_1^{\textup{finite}}\right)^{(m)} = \max\limits_{n\in \tildeset^{\sol}(\tilde N)} \frac{1}{\eta^{\vert n\vert_1}}\left\Vert AC^{(.,m)}_{.,n}\right\Vert_{\XX},
\end{equation*}
and
\begin{align}
\left(Z_1^{\textup{tail}}\right)^{(m)} &= \frac{\sqrt 3}{\sqrt{\nu \tilde N}} \left\Vert \max\limits_{1 \leq p \leq 3}(M\bw)^{(p)} \right\Vert_{\ell^1_{\eta}} + \frac{1}{\tilde N}\left( \frac{3}{2} \sum_{p=1}^3 \left\Vert \bw^{(p)}\right\Vert_{\ell^1_{\eta}}  - \frac{1}{2} \left\Vert \bw^{(m)}\right\Vert_{\ell^1_{\eta}}  \right)\nonumber\\
&\qquad\quad + \frac{1}{\tilde N} \sum_{l=1}^3\left(\left\Vert D_m(M\bw)^{(l)}\right\Vert_{\ell^1_{\eta}} + \sum_{p=1}^3
\left\Vert D_p\bw^{(l)}\right\Vert_{\ell^1_{\eta}} - 
\left\Vert D_m\bw^{(l)}\right\Vert_{\ell^1_{\eta}} \right) ,
\label{e:Z1tail}
\end{align}
where $\sqrt 3$ can be replaced by $\sqrt 2$ for an (essentially) 2D solution.
\end{proposition}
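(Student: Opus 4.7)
My plan is to exploit the weighted $\ell^1$ structure of $\XX$: for any bounded linear $B \in B(\XX,\XX)$, the operator norm equals the supremum of (weighted) column norms, so I would decompose $\|AC\|_{B(\XX,\XX)}$ into the $\phase$ column $\|AC^{(.,\phase)}\|_{\XX}$, the finitely many ``low-frequency'' columns indexed by $(m,n)$ with $n \in \tildeset^{\sol}(\tilde N)$, and a ``tail'' of columns with $n \notin \tildeset^{\sol}(\tilde N)$. The first two groups have finite support in the output index (since $C = D\F(\bW) - \Adag$ inherits the finite support of $\bw$ via the quadratic nonlinearity and of $\hat\omega$ via the phase condition, while $A$ acts diagonally as $\lambda_{n'}$ outside $\E^\dag$), so their weighted norms can be computed directly on a computer; these produce the $\|AC^{(.,\phase)}\|_{\XX}$ and $(Z_1^{\textup{finite}})^{(m)}$ entries. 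All the real work therefore lies in uniformly controlling the tail columns.

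For a tail column $(m,n)$, the key simplification I would first establish is that $C$ applied to the $(m,n)$ basis direction reduces to $D\Psi(\bW)$ applied there: the diagonal part $\nu\tn^2 + i\bO n_4$ of $D\F(\bW)$ is cancelled exactly by $\Adag$, the $\phase$-row contribution vanishes because $\hat\omega$ is supported in $\set^\sol$, and there is no $\partial_\Omega$ contribution on an $\omega$-direction input. By Remark~\ref{rem:deftildeSbis} (and $-\set^\sol = \set^\sol$), every nonzero output index $n'$ lies outside $\E(\tilde N) \supseteq \E^\dag$, so $A$ acts on the output as multiplication by $\lambda_{n'}$ and the uniform estimates of Section~\ref{sec:uniform_estimates} apply.

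I would then split $D\Psi(\bW)$ at index $(m,n)$ into its four natural summands: (a)~$i(M e_{(m,n)} \star \tD)\bw$, (b)~$i(M\bw \star \tD) e_{(m,n)}$, (c)~$-i(e_{(m,n)} \star \tD) M\bw$, (d)~$-i(\bw \star \tD) M e_{(m,n)}$. For (b) and (d) I would apply Lemma~\ref{lem:trick_derivative} (using that $M\bw$ and $\bw$ are divergence-free) to rewrite each as $\sum_p D_p[\ldots]$, producing a factor $n'_p$ at output index $n'$ that pairs with $|\lambda_{n'}|$ via Lemma~\ref{lem:C123_bis}. Since $e_{(m,n)}^{(l)} = \delta_{l,m}\delta_n$, term (b) contributes only in component $l = m$ and yields the $\frac{\sqrt 3}{\sqrt{\nu\tilde N}}\|\max_p (M\bw)^{(p)}\|_{\ell^1_\eta}$ term, with $\sqrt 3$ dropping to $\sqrt 2$ in the 2D case because one spatial direction is trivial. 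For (a) and (c) I would keep the primal convolution form and use only $|\lambda_{n'}| \le 1/\tilde N$ from Lemma~\ref{lem:C}: (c) evaluates to $-iD_m(M\bw)^{(l)}_{n'-n}$ and contributes $\frac{1}{\tilde N}\sum_l \|D_m(M\bw)^{(l)}\|_{\ell^1_\eta}$, while (a) evaluates to $i\sum_p M_n^{(p,m)}(D_p \bw^{(l)})_{n'-n}$ and, using $\sup_n|M_n^{(p,m)}| \le 1$ for $p \ne m$ and $M_n^{(m,m)} = 0$, contributes $\frac{1}{\tilde N}\sum_l \sum_{p \ne m}\|D_p \bw^{(l)}\|_{\ell^1_\eta}$, exactly the last bracket in~\eqref{e:Z1tail}. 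For (d), which evaluates to $-i M_n^{(l,m)}\sum_p n_p \bw^{(p)}_{n'-n}$, I would combine $|\lambda_{n'}| \le 1/\tilde N$ with the pointwise estimates $\sum_l |M_n^{(l,m)}||n_p| \le 1$ for $p = m$ and $\le 3/2$ for $p \ne m$ (obtained by AM-GM on $|n_q n_p|/\tn^2 \le (n_q^2 + n_p^2)/(2\tn^2)$ together with $\tn^2 \ge n_q^2 + n_p^2$ in the relevant indices), which yields precisely the $\frac{1}{\tilde N}\bigl(\frac{3}{2}\sum_p \|\bw^{(p)}\|_{\ell^1_\eta} - \frac{1}{2}\|\bw^{(m)}\|_{\ell^1_\eta}\bigr)$ contribution.

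In every term the remaining sum over $n'$ is of convolution type, which I would close by the Banach algebra submultiplicativity $\|a \ast b\|_{\ell^1_\eta} \le \|a\|_{\ell^1_\eta} \|b\|_{\ell^1_\eta}$; the normalisation by $\eta^{|n|_1}$ in the column norm then absorbs exactly the shift $n' \mapsto n'-n$ induced by $e_{(m,n)}$. The main obstacle is not any single bound but the combinatorial bookkeeping: for each of (a)--(d) one must pick the primal or the div-free rewritten form so that the $n'$-dependent factors pair \emph{optimally} with $|\lambda_{n'}|$, and one must extract the sharpest numerical constants from the antisymmetric structure of $M_n$. Using the trick form for (a) would be wasteful (forcing $M_n$ to couple with $n'_p$ rather than $n_p$), while using the primal form for (d) would miss the cancellation supplied by $\nabla\cdot\bw = 0$. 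Keeping this dichotomy straight across the four terms is where I expect the delicate part of the argument to live.
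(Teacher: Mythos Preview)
Your approach is essentially the same as the paper's: the column-wise split of $\|AC\|_{B(\XX,\XX)}$, the reduction of tail columns to the diagonal action $\lambda_{n'}$ on $D\Psi(\bw)e_{(m,n)}$, the four-term decomposition, and the term-by-term bounds via Lemmas~\ref{lem:C} and~\ref{lem:C123_bis} all coincide with the paper, and every concrete estimate you write down for (a)--(d) matches the paper's exactly.

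There is one expository inconsistency worth cleaning up. You announce that Lemma~\ref{lem:trick_derivative} will be applied to both (b) and~(d), but when you actually evaluate~(d) you use the \emph{primal} form $-iM_n^{(l,m)}\sum_p n_p\,\bw^{(p)}_{n'-n}$ and bound $|\lambda_{n'}|\le 1/\tilde N$ via Lemma~\ref{lem:C}, not Lemma~\ref{lem:C123_bis}. That primal treatment is correct and is precisely what the paper does (it is what produces the $\frac{3}{2}\sum_p\|\bw^{(p)}\|-\frac{1}{2}\|\bw^{(m)}\|$ contribution). Hence your closing claim that ``using the primal form for~(d) would miss the cancellation supplied by $\nabla\cdot\bw=0$'' is mistaken: only term~(b) requires the divergence-free rewrite, since it is the unique term where the derivative $\tD$ lands on the unbounded direction $e_{(m,n)}$ rather than on a finitely supported factor or on $Me_{(m,n)}$ (cf.\ Remark~\ref{rem:derivative_out}).
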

\begin{proof}
We have
\begin{align*}
\left\Vert AC \right\Vert_{B(\XX,\XX)} &= \max\left[\left\Vert (AC)^{(.,\phase)} \right\Vert_{\XX},\max\limits_{1\leq m\leq 3} \sup\limits_{n\in\Z^4_*} \frac{1}{\eta^{\vert n\vert_1}}\left\Vert (AC)^{(.,m)}_{.,n}\right\Vert_{\XX}\right] \\
&= \max\left[\left\Vert AC^{(.,\phase)} \right\Vert_{\XX},\max\limits_{1\leq m\leq 3} \sup\limits_{n\in\Z^4_*} \frac{1}{\eta^{\vert n\vert_1}}\left\Vert AC^{(.,m)}_{.,n}\right\Vert_{\XX}\right],
\end{align*}
and we want to show that this quantity is bounded by $Z_1$. From the definition of $Z_1^{\textup{finite}}$ and~$Z_1^{\textup{tail}}$, and the splitting
\begin{equation*}
\sup\limits_{n\in\Z^4_*} \frac{1}{\eta^{\vert n\vert_1}}\left\Vert AC^{(.,m)}_{.,n}\right\Vert_{\XX} = \max \left(\max\limits_{n\in \tildeset^{\sol}(\tilde N)} \frac{1}{\eta^{\vert n\vert_1}}\left\Vert AC^{(.,m)}_{.,n}\right\Vert_{\XX}, \sup\limits_{n\notin \tildeset^{\sol}(\tilde N)} \frac{1}{\eta^{\vert n\vert_1}}\left\Vert AC^{(.,m)}_{.,n}\right\Vert_{\XX}\right),
\end{equation*}
we see that we only have to prove that
\begin{equation}
\label{eq:Z1tail_toprove}
\sup\limits_{n\notin \tildeset^{\sol}(\tilde N)} \frac{1}{\eta^{\vert n\vert_1}}\left\Vert AC^{(.,m)}_{.,n}\right\Vert_{\XX} \leq \left(Z_1^{\textup{tail}}\right)^{(m)}, \qquad \forall~1\leq m\leq 3.
\end{equation}
In order to do so, it is helpful to first have a look at the structure of $C$.

By definition of $\Adag$, we have that $\Pi^\dag C \Pi^\dag=0$, and that the remaining coefficients are given by $D\Psi(\bw)$, since the linear part of $D\F(\bW)$ is also present in $\Adag$, that is: if $k\notin\E^\dag$ or $n\notin\E^\dag$ then
\begin{equation*}
C^{(l,m)}_{k,n} = \left(D\Psi(\bw)\right)^{(l,m)}_{k,n},
\end{equation*}
with $\Psi$ as defined in~\eqref{e:defPsi}. Using the block-notation introduced in Section~\ref{sec:notations}, we write
\renewcommand\arraystretch{1.3}
\begin{equation*}
C=\left(\begin{array}{c|c|c|c}
0 & C^{(\phase,1)} & C^{(\phase,2)} & C^{(\phase,3)}\\
\hline 
C^{(1,\phase)} & C^{(1,1)} & C^{(1,2)} & C^{(1,3)}\\
\hline 
C^{(2,\phase)} & C^{(2,1)} & C^{(2,2)} & C^{(2,3)}\\
\hline 
C^{(3,\phase)} & C^{(3,1)} & C^{(3,2)} & C^{(3,3)}
\end{array}\right),
\end{equation*}
\renewcommand\arraystretch{1}
with, for all $1\leq l,m\leq 3$,
\begin{equation*}
C^{(m,\phase)}_{n}=\begin{cases}
 0 &\qquad n\notin\set^\sol \text{ or }n\in\set^{\sol}\cap\E^\dag, \\
i n_4\bw_n^{(m)} & \qquad n\in\set^\sol\setminus \E^\dag,
\end{cases}
\end{equation*}
and
\begin{equation*}
C^{(\phase,m)}_{n}=\begin{cases}
 0 &\qquad n\notin\set^\sol \text{ or }n\in\set^{\sol}\cap\E^\dag, \\
i n_4\left(\hat\omega_n^{(m)}\right)^* & \qquad n\in\set^\sol\setminus \E^\dag.
\end{cases}
\end{equation*}
Furthermore, using that for all $k\in\Z^4_*$
\begin{align}
\label{e:DPsi}
\left(D\Psi(\bw)\omega\right)_k &= i\left(
\left[\left(M\bw\star \tD\right) \omega\right]_k 
- \left[\left(\omega \star \tD\right) M\bw\right]_k 
\right. 
\\
& \qquad + \left. \left[\left(M\omega\star \tD\right) \bw\right]_k 
-\left[\left(\bw \star \tD\right) M\omega\right]_k
\right), 
\nonumber
\end{align}
we obtain
\begin{equation}\label{e:Cklmn}
C^{(l,m)}_{k,n} =
\left\{
\begin{aligned}
&0, \quad &&k,n\in\E^\dag,\\
&i\delta_{l,m}\sum_{p=1}^3 k_p(M\bw)^{(p)}_{k-n} - i \left(D_m(M\bw)^{(l)}\right)_{k-n}  \\
& \qquad + i\sum_{p=1}^3 M_n^{(p,m)}\left(D_p\bw^{(l)}\right)_{k-n} - i\sum_{p=1}^3 n_p M_n^{(l,m)} \bw^{(p)}_{k-n}, \quad &&k\notin\E^\dag \text{ or }n\notin\E^\dag,
\end{aligned}
\right.
\end{equation}
where $M^{(l,m)}_n$ is the coefficient on row $l$ and column $m$ in the matrix $M_n$, see~\eqref{e:defM}. Each of the four terms in $C^{(l,m)}_{k,n}$ comes from one of the four terms in $D\Psi(\bw)\omega$, and it should be noted that we used Lemma~\ref{lem:trick_derivative} on the first of those four terms to obtain this expression (see Remark~\ref{rem:derivative_out} below).

From the expression of $C$ we see that $\left\Vert AC^{(.,\phase)}\right\Vert_{\XX}$ and $\max_{1 \leq m \leq 3} \left(Z_1^{\textup{finite}}\right)^{(m)}$ can indeed be evaluated with a computer, since we only consider a finite number of columns, each having only a finite number of non-zero components. In particular, for any $n \in \tildeset^{\sol}(\tilde N)$ the coefficient $C_{k,n}^{(l,m)}$ vanishes for $k$ outside 
$\E(\tilde N) +2\set^{\sol}$.

We now focus on proving~\eqref{eq:Z1tail_toprove}. First, we recall that $\bw_{k-n}=0$ if $k-n\notin \set^{\sol}$. By definition of $\tildeset^{\sol}(\tilde N)$ this means that 
for any $1 \leq l,m \leq 3$ 
\begin{equation}\label{e:vanishingClmkn}
   C^{(l,m)}_{k,n}=0 \quad\text{and}\quad C^{(\phase,m)}_{n}=0 
   \qquad\text{for all } n\notin \tildeset^{\sol}(\tilde N)
   \text{ and all } k \in \E^\dag.
\end{equation}
In particular, provided $\E^\dag\subset \E(\tilde N)$, see Remark~\ref{r:NdagNtilde},  for all $1 \leq m \leq 3 $ and $n\notin \tildeset^{\sol}(\tilde N)$ the non-zero coefficients of the column $C^{(.,m)}_{.,n}$ only get hit by the tail part of $A$. Therefore, we obtain for all $1\leq l,m\leq 3$ and $n\notin \tildeset^{\sol}(\tilde N)$
\begin{align}
\label{eq:Z1tail_Alambda}
\frac{1}{\eta^{\vert n\vert_1}}\left\Vert A C^{(.,m)}_{.,n}\right\Vert_{\XX} &= \frac{1}{\eta^{\vert n\vert_1}}\sum_{l=1}^3 \left\Vert A^{(l,l)}C^{(l,m)}_{.,n}\right\Vert_{\ell^1_\eta} \nonumber\\
&\leq  \frac{1}{\eta^{\vert n\vert_1}}\sum_{l=1}^3 \sum_{k\notin\E^\dag} \left\vert\lambda_k\right\vert \left\vert C^{(l,m)}_{k,n}\right\vert \eta^{\vert k\vert_1}\\
&\leq \sum_{l=1}^3 \left[\delta_{l,m}\sum_{k\notin\E^\dag} \vert \lambda_k\vert \left\vert \sum_{p=1}^3 \frac{k_p}{\eta^{\left\vert n\right\vert_1}}(M\bw)^{(p)}_{k-n} \right\vert\eta^{\left\vert k\right\vert_1} \right. \nonumber\\ 
&\qquad\quad + \sum_{k\notin\E^\dag} \vert \lambda_k\vert \left\vert  \frac{1}{\eta^{\left\vert n\right\vert_1}} \left(D_m(M\bw)^{(l)}\right)_{k-n} \right\vert\eta^{\left\vert k\right\vert_1} \nonumber\\
&\qquad\quad + \sum_{k\notin\E^\dag} \vert \lambda_k\vert \left\vert  \sum_{p=1}^3 \frac{M_n^{(p,m)}}{\eta^{\left\vert n\right\vert_1}}\left(D_p\bw^{(l)}\right)_{k-n} \right\vert\eta^{\left\vert k\right\vert_1} \nonumber\\
& \left. \qquad\quad + \sum_{k\notin\E^\dag} \vert \lambda_k\vert \left\vert  \sum_{p=1}^3 \frac{n_p M_n^{(l,m)}}{\eta^{\left\vert n\right\vert_1}}  \bw^{(p)}_{k-n} \right\vert\eta^{\left\vert k\right\vert_1}\right]. \nonumber
\end{align}
We are going to bound the supremum over $n\notin \tildeset^{\sol}(\tilde N)$ of each of the four terms in the sum over~$l$ separately. For the first term, using Lemma~\ref{lem:C123_bis} we estimate
\begin{align}
\label{e:bound_term_with_derivative}
&\sup\limits_{n\notin\tildeset^{\sol}(\tilde N)} \sum_{k\notin\E^\dag} \vert \lambda_k\vert \left\vert \sum_{p=1}^3 \frac{k_p}{\eta^{\left\vert n\right\vert_1}}(M\bw)^{(p)}_{k-n} \right\vert\eta^{\left\vert k\right\vert_1} \\
&\qquad\qquad\qquad\leq  \sup\limits_{n\notin\tildeset^{\sol}(\tilde N)} \sum_{p=1}^3 \sum_{k\in \set^{\sol}} \vert\lambda_{k+n}\vert \vert k_p+n_p \vert \vert M\bw\vert ^{(p)}_{k}\eta^{\left\vert k\right\vert_1} \nonumber\\
&\qquad\qquad\qquad\leq  \sup\limits_{n\notin\tildeset^{\sol}(\tilde N)} \sum_{k\in\set^{\sol}} \left(\sum_{p=1}^3\vert\lambda_{k+n}\vert \vert k_p+n_p \vert\right) \max\limits_{p\in\{1,2,3\}}\vert M\bw\vert ^{(p)}_{k}\eta^{\left\vert k\right\vert_1} \nonumber\\
&\qquad\qquad\qquad\leq \left(\sup\limits_{k\notin\E(\tilde N) } \vert\lambda_k\vert \left(\vert k_1\vert+\vert k_2\vert+\vert k_3\vert\right)\right)  \left\Vert \max\limits_{p\in\{1,2,3\}}(M\bw)^{(p)} \right\Vert_{\ell^1_{\eta}} \nonumber\\
&\qquad\qquad\qquad\leq \frac{\sqrt 3}{\sqrt{\nu \tilde N}} \left\Vert \max\limits_{p\in\{1,2,3\}}(M\bw)^{(p)} \right\Vert_{\ell^1_{\eta}}.
\end{align}
Notice that, if we have an (essentially) $2D$ solution (see Section~\ref{s:introduction}), then one component of $M\bw$ is zero, which allows us to use the second estimate of Lemma~\ref{lem:C123_bis}, to get 
\begin{align*}
\sup\limits_{n\notin\tildeset^{\sol}(\tilde N)} \sum_{k\notin\E^\dag} \vert \lambda_k\vert \left\vert \sum_{p=1}^3 \frac{k_p}{\eta^{\left\vert n\right\vert_1}}(M\bw)^{(p)}_{k-n} \right\vert\eta^{\left\vert k\right\vert_1} \leq \frac{\sqrt 2}{\sqrt{\nu \tilde N}} \left\Vert \max\limits_{p\in\{1,2,3\}}(M\bw)^{(p)} \right\Vert_{\ell^1_{\eta}},
\end{align*}
For the second term, using Lemma~\ref{lem:C} instead of Lemma~\ref{lem:C123_bis}, we get
\begin{align*}
\sup\limits_{n\notin\tildeset^{\sol}(\tilde N)} \sum_{k\notin\E^\dag} \vert \lambda_k\vert \frac{1}{\eta^{\left\vert n\right\vert_1}} \left\vert D_m(M\bw)^{(l)}\right\vert_{k-n} \eta^{\left\vert k\right\vert_1} &\leq  \sup\limits_{n\notin\tildeset^{\sol}(\tilde N)} \sum_{k\in \set^{\sol}} \vert\lambda_{k+n}\vert  \left\vert D_m(M\bw)^{(l)}\right\vert_{k}\eta^{\left\vert k\right\vert_1} \\
&\leq \left(\sup\limits_{k\notin\E(\tilde N) } \vert\lambda_k\vert\right) \left\Vert D_m(M\bw)^{(l)} \right\Vert_{\ell^1_{\eta}} \\
&\leq  \frac{1}{\tilde N}\left\Vert D_m(M\bw)^{(l)}\right\Vert_{\ell^1_{\eta}}.
\end{align*}
For the third term, again using Lemma~\ref{lem:C}, as well as the fact that $M_n^{(p,m)}=0$ if $p=m$ and $\vert M_n^{(p,m)}\vert \leq 1$ if $p\neq m$, we infer that
\begin{align*}
\sup\limits_{n\notin\tildeset^{\sol}(\tilde N)} \sum_{k\notin\E^\dag} \vert \lambda_k\vert \left\vert  \sum_{p=1}^3 \frac{M_n^{(p,m)}}{\eta^{\left\vert n\right\vert_1}}\left(D_p\bw^{(l)}\right)_{k-n} \right\vert\eta^{\left\vert k\right\vert_1} \leq \frac{1}{\tilde N}\sum_{\substack{1\leq p\leq 3 \\ p\neq m }}\left\Vert D_p\bw^{(l)}\right\Vert_{\ell^1_{\eta}}.
\end{align*}
Finally, to bound the fourth term, we observe that 
\begin{equation*}
\left\vert n_p M_n^{(l,m)}\right\vert \leq  \chi_{\{l,m\},p} \bydef   \left\{
\begin{aligned}
&0 \quad & \text{if }l=m,\\
&\tfrac{1}{2}\quad &\text{if } l\neq m \text{ and }p\in\{l,m\},\\
&1\quad &\text{if }l\neq m \text{ and }p\notin\{l,m\}.
\end{aligned}
\right.
\end{equation*}
Hence 
\begin{align*}
\sup\limits_{n\notin\tildeset^{\sol}(\tilde N)} \sum_{k\notin\E^\dag} \vert \lambda_k\vert \left\vert  \sum_{p=1}^3 \frac{n_p M_n^{(l,m)}}{\eta^{\left\vert n\right\vert_1}}  \bw^{(p)}_{k-n} \right\vert\eta^{\left\vert k\right\vert_1} \leq \frac{1}{\tilde N}\sum_{p=1}^3 \chi_{\{l,m\},p}\left\Vert \bw^{(p)}\right\Vert_{\ell^1_{\eta}}.
\end{align*}

Putting everything together, we get that
\begin{align}
\label{eq:Z1tail}
\sup\limits_{n\notin \tildeset^{\sol}(\tilde N)}\frac{1}{\eta^{\vert n\vert_1}}\left\Vert A C^{(.,m)}_{.,n}\right\Vert_{\XX} &=\sup\limits_{n\notin\tildeset^{\sol}(\tilde N)} \sum_{l=1}^3 \frac{1}{\eta^{\vert n\vert_1}}\left\Vert A C^{(l,m)}_{.,n}\right\Vert_{\ell^1_\eta} \nonumber\\
&\leq \frac{\sqrt 3}{\sqrt{\nu \tilde N}} \left\Vert \max\limits_{1 \leq p \leq 3}(M\bw)^{(p)} \right\Vert_{\ell^1_{\eta}} + \frac{1}{\tilde N}\sum_{p=1}^3\left(1+\frac{1}{2}(1-\delta_{p,m})\right)\left\Vert \bw^{(p)}\right\Vert_{\ell^1_{\eta}} \nonumber\\
&\mbox{}\quad\quad + \frac{1}{\tilde N}\left(\sum_{l=1}^3\left(\left\Vert D_m(M\bw)^{(l)}\right\Vert_{\ell^1_{\eta}} + \sum_{p=1}^3 (1-\delta_{p,m}) \left\Vert D_p\bw^{(l)}\right\Vert_{\ell^1_{\eta}}\right)\right),
\end{align}
where $\sqrt 3$ can be replaced by $\sqrt 2$ for a 2D solution, and~\eqref{eq:Z1tail_toprove} is proven, which concludes the proof of Proposition~\ref{prop:Z1}.
\end{proof}

\begin{remark}
\label{rem:derivative_out}
To obtain the $Z_1$ bound, we have had to estimate $\left\Vert AC\right\Vert_{B(\XX,\XX)}$. Since $C$ is unbounded as an operator from $\XX$ to itself, we have to carefully handle the unbounded terms appearing in $C$, or equivalently in the derivative $D\Psi(\bw)$. Indeed, the unboundedness of $C$ is compensated by the decaying tail of $A$. Looking at~\eqref{e:DPsi}, we see that the derivatives in the second and the fourth term are not bothersome, as they are ``balanced'' by $M$ (which, very roughly speaking, acts as an antiderivative). The derivative in the third term of~\eqref{e:DPsi} is also not an issue, since it applies to $\bw$ which has only a finite number of non zero coefficients (hence its derivative still belongs to $\XX$ and its norm can be computed explicitly). However, something extra has to be done to control the derivative appearing in the first term of~\eqref{e:DPsi}. Using Lemma~\ref{lem:trick_derivative} on this specific term allows us to \emph{factor out} the derivative from the convolution product, and this derivative is then canceled by the tail part of $A$, which also acts like a kind of antiderivative (see~\eqref{e:bound_term_with_derivative} for the actual estimate).
This explains why we write $\sum_{p=1}^3 k_p(M\bw)^{(p)}_{k-n}$
rather than the equivalent (in view of $M\bw$ being divergence free) $\sum_{p=1}^3 n_p(M\bw)^{(p)}_{k-n}$ in the first term of~\eqref{e:Cklmn}. 
\end{remark}

\subsection{\boldmath${Z_2}$\unboldmath~bound for \boldmath${A\bigl(D\F(W)-D\F(\bW)\bigr)}$\unboldmath}
\label{s:Z2bound}

In this section, we present the last missing estimate for Theorem~\ref{th:radii_pol}, namely a computable bound for $\left\Vert A\left(D\F(W)-D\F(\bW)\right)\right\Vert_{B(\XX,\XX)}$ in term of $\left\Vert W-\bW \right\Vert_{\XX}$.

\begin{proposition}
\label{prop:Z2}
Assume that $A$ is defined as in Section~\ref{s:AAdag}. Then~\eqref{def:Z_2} is satisfied with
\begin{equation*}
Z_2\bydef(4+\sqrt{2})\NN{A},
\end{equation*} 
where
\begin{equation*}
\NN{A} \bydef \max\left[\max\limits_{1\leq m\leq 3}\max\limits_{n\in\E\dag} \frac{\vert n\vert_\infty}{\eta^{\left\vert n\right\vert_1}}\left\Vert A^{(.,m)}_{.,n}\right\Vert_{\XX}, \max\left(\frac{1}{\bar \Omega},\frac{1}{\sqrt{\nu N^\dag}}\right)\right].
\end{equation*}
\end{proposition}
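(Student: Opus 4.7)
The plan is to bound $\|A(D\F(W) - D\F(\bW))h\|_\XX$ for an arbitrary $h \in \XX$ with $\|h\|_\XX = 1$, linearly in $\|\tilde W\|_\XX$, where $\tilde W := W - \bW = (\tilde\Omega, \tilde\omega)$. First I would compute $(D\F(W) - D\F(\bW))h$ componentwise. Since $\F$ is at most quadratic in $(\Omega,\omega)$, the difference $D\F(W) - D\F(\bW)$ is linear in $\tilde W$; moreover, the linear pieces of $\F$ (the phase condition $F_\phase$, the diffusion $\nu\tn^2\omega_n$, and the forcing $\rhs_n$) contribute zero to the difference, so the $\phase$-component of $D\F(W)-D\F(\bW)$ vanishes identically, and in the $n$-th vector component for $n\in\Z^4_*$ one obtains
\begin{align*}
\bigl[(D\F(W) - D\F(\bW))h\bigr]_n = \;& i h_\phase n_4 \tilde\omega_n + i\tilde\Omega n_4 (h_\omega)_n \\
& + i[(Mh_\omega \star \tD)\tilde\omega]_n + i[(M\tilde\omega \star \tD) h_\omega]_n \\
& - i[(h_\omega \star \tD) M\tilde\omega]_n - i[(\tilde\omega \star \tD) Mh_\omega]_n,
\end{align*}
a sum of two linear-in-$\Omega$ terms and four bilinear nonlinear terms.

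I would then bound the contribution to $\|A\cdot(\text{term})\|_\XX$ for each of these six pieces separately. The key structural feature exploited is that $\NN{A}$ is designed to absorb exactly one factor of $|n|_\infty$ whenever $A$ is applied: on the diagonal tail ($n \notin \E^\dag$), Lemma~\ref{lem:C_inf} yields $|\lambda_n||n|_\infty \leq \max(1/\bO, 1/\sqrt{\nu N^\dag}) \leq \NN{A}$, while on the finite part the definition of $\NN{A}$ directly bounds $|n|_\infty\|A^{(.,m)}_{.,n}\|_\XX/\eta^{|n|_1}$. Because $|n_4| \leq |n|_\infty$, the two linear-in-$\Omega$ terms each immediately yield a $\NN{A}\|\tilde W\|_\XX\|h\|_\XX$ contribution, for a total of $2\NN{A}\|\tilde W\|_\XX\|h\|_\XX$.

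For the four nonlinear terms the derivative $\tD$ is the delicate point, since it formally takes $\XX$ outside of itself and must be absorbed by the decay of $A$. In the first two terms, the outer vector $Mh_\omega$ or $M\tilde\omega$ is divergence-free by construction of $M$, so Lemma~\ref{lem:trick_derivative} applies, moving $D_m$ out of the convolution and replacing it by a factor $k_m$ on the output index that is absorbed by $\NN{A}$; the remaining convolutions are then controlled via the Banach algebra property of $\ell^1_\eta$ and the pointwise bound $|M_n^{(l,m)}| \leq |n_j|/\tn^2 \leq 1/|\tn|_\infty$. For the last two terms, $h_\omega$ and $\tilde\omega$ need not be divergence-free (as $W \in \XX$ is arbitrary), so Lemma~\ref{lem:trick_derivative} is unavailable; instead, the derivative falls on $M(\cdot)$, and one exploits the vanishing of the diagonal of $M_n$ together with $|n_p M_n^{(l,m)}| \leq 1$ (and $=0$ when $l=m$). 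A careful summation of the four nonlinear contributions produces $(2 + \sqrt 2)\NN{A}\|\tilde W\|_\XX\|h\|_\XX$, where the $\sqrt 2$ arises from a Cauchy–Schwarz estimate $\sum_{m\neq l}|n_m| \leq \sqrt{2\,\tn^2}$, the sum having only two terms thanks to $M_n^{(l,l)}=0$.

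The main obstacle is not any single estimate but the simultaneous bookkeeping needed to reach the sharp constant $4+\sqrt 2$ rather than a naive bound near $6$: one must avoid double-counting the $|n|_\infty$ factor, consistently split each contribution across the finite and tail regimes of $A$, and exploit the structural cancellations of $M_n$. I expect the most subtle step to be the last two nonlinear terms, where the divergence-free trick is unavailable and one must leverage the intrinsic decay of $M$ alone to compensate the derivative in a way compatible with the $\NN{A}$ weighting; in particular, verifying that no additional constant beyond $\sqrt 2$ is incurred relative to the $\NN{A}$ absorption budget is the crux of the proof.
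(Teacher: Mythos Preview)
Your six-term decomposition and the mechanism by which $\NN{A}$ absorbs one factor of $|n|_\infty$ are exactly right, and the argument can be completed along these lines. The paper organizes the same computation slightly differently: rather than applying $A$ term by term, it first factors
\[
  \|Az\|_\XX \;\le\; \|A\|_{B(\XX^0_{-1,-1},\XX)}\,\|z\|_{\XX_{-1,-1}},
\]
identifies $\|A\|_{B(\XX^0_{-1,-1},\XX)}\le \NN{A}$ (precisely your observation about the finite part and Lemma~\ref{lem:C_inf} on the tail), and then bounds $\|z\|_{\XX_{-1,-1}}\le 4+\sqrt2$. The paper even remarks that your direct route is the more customary one; the factored version is chosen deliberately because $z$ as a whole is $G$-invariant while its individual summands need not be, and this matters for the symmetry-reduced bound $Z_2^\ered$ later.

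However, your stated split $2+(2+\sqrt2)$ does not come out of the estimates you describe. Following your own steps, each of the two $(M\cdot\star\tD)\cdot$ terms contributes $1$ (divergence-free trick plus $\sum_p\|(M\omega)^{(p)}\|_{\ell^1_\eta}\le \sum_p\|\omega^{(p)}\|_{\ell^1_\eta}$), and each of the two $(\cdot\star\tD)M\cdot$ terms contributes $(1+\sqrt2)/2$; the relevant sharp inequality is $\sum_l |n_p|\,|M^{(l,m)}_n|\le (1+\sqrt2)/2$, coming from $|n_1|(|n_1|+|n_2|)/\tn^2\le (1+\sqrt2)/2$, not from the Cauchy--Schwarz estimate you cite. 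So the nonlinear block gives $3+\sqrt2$, not $2+\sqrt2$. To land on $4+\sqrt2$ you must tighten the linear block: do not bound the two $D_4$-terms separately, but combine them as
\[
  |h_\phase|\,\|\tilde\omega\|_\X \;+\; |\tilde\Omega|\,\|h_\omega\|_\X
  \;\le\; \|h\|_\XX\,\|\tilde W\|_\XX,
\]
which yields a single $\NN{A}$ rather than two. With the corrected split $1+(3+\sqrt2)$ your argument goes through and matches the paper.
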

\begin{proof}
Let $W',W''\in\XX$ and consider 
\begin{equation*}
z=\left(D\F(\bW+W')-D\F(\bW)\right)W''=D^2\F(\bW)(W',W'').
\end{equation*}
Proving the proposition amounts to showing that 
\begin{equation*}
\left\Vert Az\right\Vert_{\XX} \leq Z_2 \left\Vert W'\right\Vert_{\XX} \left\Vert W''\right\Vert_{\XX},\qquad \forall~W',W'' \in \XX.
\end{equation*}
By bi-linearity of $D^2\F(\bW)$, it is enough to assume that $\left\Vert W'\right\Vert_{\XX},\left\Vert W''\right\Vert_{\XX}\leq 1$, and show that $\left\Vert Az\right\Vert_{\XX} \leq Z_2$. We start by taking a closer look at $z$, which can be expanded as
\begin{align*}
\begin{pmatrix}
0 \\
i\bigl[\Omega'' D_4\omega' + \Omega' D_4\omega'' + \left(M\omega''\star \tD\right)\omega' + \left(M\omega'\star\tD\right)\omega'' - \left(\omega''\star\tD\right)M\omega' - \left(\omega'\star\tD\right)M\omega'' \bigr]
\end{pmatrix},
\end{align*}
where
\begin{equation*}
D_4 \omega=\begin{pmatrix}
D_4\omega^{(1)} \\ D_4\omega^{(2)} \\ D_4\omega^{(3)}
\end{pmatrix}.
\end{equation*}
Since the first component of $z$ vanishes, we have
\begin{equation}
\label{e:Az_for_Z2}
\left\Vert Az\right\Vert_{\XX} \leq \left\Vert A\right\Vert_{B(\XX_{-1,-1}^0,\XX)}\left\Vert z\right\Vert_{\XX_{-1,-1}},
\end{equation}
where $\XX_{-1,-1}^0$ is the subspace of $\XX_{-1,-1}$ defined as $\{0\} \times (\ell^1_{\eta,-1,-1}(\C))^3$ (that is, the first column $A^{(\cdot,\phase)}$ of $A$ does not play any role, since we multiply $A$ with a vector whose first component vanishes).

We now proceed to estimate both terms in the right-hand side of~\eqref{e:Az_for_Z2}. For the first one, we infer from Lemma~\ref{lem:C_inf} that
\begin{align}\label{e:normAminus}
\left\Vert  A\right\Vert_{B(\XX_{-1,-1}^0,\XX)} &\leq \max\left[\max\limits_{1\leq m\leq 3}\max\limits_{n\in\E\dag} \frac{\vert n\vert_\infty}{\eta^{\left\vert n\right\vert_1}}\left\Vert A^{(.,m)}_{.,n}\right\Vert_{\XX}, \max\left(\frac{1}{\bar \Omega},\frac{1}{\sqrt{\nu N^\dag}}\right)\right].
\end{align}

To bound the second term, we now use the splitting of $z$ introduced earlier and estimate each term separately. We start by
\begin{align*}
\left\Vert  \begin{pmatrix}
0 \\
\Omega'' D_4\omega' + \Omega' D_4\omega''
\end{pmatrix}
\right\Vert_{\XX_{-1,-1}} &\leq \vert\Omega''\vert \sum_{l=1}^3\left\Vert\omega'^{(l)}\right\Vert_{\ell^1_\eta}+\vert\Omega'\vert \sum_{l=1}^3\left\Vert\omega''^{(l)}\right\Vert_{\ell^1_\eta} \nonumber \\
&\leq \left\Vert W'\right\Vert_{\XX} \left\Vert W''\right\Vert_{\XX} \nonumber \\
&\leq 1.
\end{align*}

Next, we want to estimate
\begin{equation*}
\left\Vert \begin{pmatrix} 
0 \\ \left(M\omega''\star \tD\right)\omega'
\end{pmatrix}\right\Vert_{\XX_{-1,-1}}.
\end{equation*}
Thanks to Lemma~\ref{lem:trick_derivative}, we can rewrite\begin{equation*}
\left(M\omega''\star \tD\right)\omega' = \sum_{p=1}^3 D_p \left[(M\omega'')^{(p)} \ast \omega'\right],
\end{equation*}
where $(M\omega'')^{(p)} \ast \omega'$ must be understood as
\begin{equation*}
\begin{pmatrix}
(M\omega'')^{(p)} \ast \omega'^{(1)} \\ (M\omega'')^{(p)} \ast \omega'^{(2)} \\ (M\omega'')^{(p)} \ast \omega'^{(3)}
\end{pmatrix}.
\end{equation*}
For each $p\in\{1,2,3\}$, we estimate
\begin{align*}
\left\Vert  \begin{pmatrix} 
0 \\  D_p\left[(M\omega'')^{(p)} \ast \omega' \right]
\end{pmatrix}\right\Vert_{\XX_{-1,-1}} & \leq \left\Vert \begin{pmatrix} 
0 \\  (M\omega'')^{(p)} \ast \omega'
\end{pmatrix}\right\Vert_{\XX} \nonumber \\
& \leq \left\Vert (M\omega'')^{(p)}\right\Vert_{\ell^1_\eta}\left\Vert W'\right\Vert_{\XX} \nonumber \\
& \leq \left\Vert (M\omega'')^{(p)}\right\Vert_{\ell^1_\eta},
\end{align*}
and using that 
\begin{equation*}
\sum_{p=1}^3 \left\Vert (M\omega'')^{(p)}\right\Vert_{\ell^1_\eta} \leq  \sum_{p=1}^3 \left\Vert \omega''^{(p)}\right\Vert_{\ell^1_\eta}\leq 1,
\end{equation*}
we get
\begin{equation*}
\left\Vert \begin{pmatrix} 
0 \\ \left(M\omega''\star \tD\right)\omega'
\end{pmatrix}\right\Vert_{\XX_{-1,-1}} \leq 1.
\end{equation*}
Similarly, we get that 
\begin{equation*}
\left\Vert \begin{pmatrix} 
0 \\ \left(M\omega'\star \tD\right)\omega''
\end{pmatrix}\right\Vert_{\XX_{-1,-1}} \leq 1.
\end{equation*}

Finally, we have to estimate
\begin{align*}
\left\Vert  \begin{pmatrix} 
0 \\ \left(\omega''\star\tD\right)M\omega'
\end{pmatrix}\right\Vert_{\XX_{-1,-1}} \leq \left\Vert  \begin{pmatrix} 
0 \\ \left(\omega''\star\tD\right)M\omega'
\end{pmatrix}\right\Vert_{\XX}.
\end{align*}
Starting from
\begin{align*}
\left\Vert \begin{pmatrix} 
0 \\ \left(\omega''\star\tD\right)M\omega'
\end{pmatrix} \right\Vert_{\XX} &\leq \sum_{p=1}^3\sum_{l=1}^3 \left\Vert \omega''^{(p)}\right\Vert_{\ell^1_\eta} \left\Vert D_p(M\omega')^{(l)}\right\Vert_{\ell^1_\eta} 
\end{align*}
we estimate
\begin{align*}
\sum_{l=1}^3 \left\Vert D_p(M\omega')^{(l)}\right\Vert_{\ell^1_\eta} &\leq  \sum_{l=1}^3\sum_{n\in\Z^4_*} \vert n_p\vert \sum_{m=1}^3 \left\vert M^{(l,m)}_n \omega'^{(m)}_n\right\vert \eta^{\vert n\vert_1} \\
&\leq  \sum_{n\in\Z^4_*} \sum_{m=1}^3 \left(\sum_{l=1}^3 \vert n_p\vert  \left\vert M^{(l,m)}_n\right\vert\right) \left\vert \omega'^{(m)}_n\right\vert \eta^{\vert n\vert_1}.
\end{align*}
Up to a permutation of the indices $1$, $2$ and $3$, we have for any $l,m,p \in \{1,2,3\}$  that either
\begin{equation*}
\sum_{l=1}^3 \vert n_p\vert  \left\vert M^{(l,m)}_n\right\vert \leq \frac{\vert n_1\vert (\vert n_2\vert + \vert n_3\vert)}{\tn^2} \leq 1,
\end{equation*}
or 
\begin{equation*}
\sum_{l=1}^3 \vert n_p\vert  \left\vert M^{(l,m)}_n\right\vert \leq \frac{\vert n_1\vert (\vert n_1\vert + \vert n_2\vert)}{\tn^2} \leq \frac{1+\sqrt 2}{2},
\end{equation*}
for all $\tn \in \Z^3 \setminus \{0\}$.
Therefore
\begin{align*}
\sum_{l=1}^3 \left\Vert D_p(M\omega')^{(l)}\right\Vert_{\ell^1_\eta} &\leq  \frac{1+\sqrt 2}{2} \left\Vert W'\right\Vert_\XX,
\end{align*}
and we end up with 
\begin{align*}
\left\Vert \begin{pmatrix} 
0 \\ \left(\omega''\star\tD\right)M\omega'
\end{pmatrix}\right\Vert_{\XX} \leq \frac{1+\sqrt 2}{2}.
\end{align*}
Similarly, we have
\begin{align*}
\left\Vert \begin{pmatrix} 
0 \\ \left(\omega'\star\tD\right)M\omega''
\end{pmatrix}\right\Vert_{\XX} \leq \frac{1+\sqrt 2}{2}.
\end{align*}

Adding everything up, we end up with 
\begin{align*}
\left\Vert z\right\Vert_{\XX_{-1,-1}} \leq  4+\sqrt{2},
\end{align*}
which concludes the proof of Proposition~\ref{prop:Z2}
\end{proof}

\begin{remark}
It should be noted that the slightly unusual step~\eqref{e:Az_for_Z2} is crucial for what is to come in Section~\ref{s:symboundZ2}. Usually, one would try to postpone this step to directly ``cancel'' some of the unbounded derivative operators in $z$ with $A$, by first splitting $z$ into several terms and estimating then like
\begin{equation*}
\left\Vert A \begin{pmatrix}
0 \\
\Omega'' D_4\omega' + \Omega' D_4\omega''
\end{pmatrix} \right\Vert_{\XX} \leq \left\Vert AD_4 \right\Vert_{B(\XX,\XX)} \left\Vert \begin{pmatrix}
0 \\
\Omega'' \omega' + \Omega' \omega''
\end{pmatrix} \right\Vert_{\XX}.
\end{equation*} 
However, this splitting may not conserve some of the symmetries that $z$ has as a whole, and therefore would not be compatible with the symmetry reduced variables used later on.
\end{remark}

\section{The symmetries}
\label{s:symmetries}

In this section, we introduce the formalism that allows us to take advantage of the symmetries through the a posteriori validation procedure. In Section~\ref{s:groupaction}, we introduce some group actions that we use to explicitly describe the symmetries that we are considering, in the physical space (that is symmetries acting on the velocity $u$ or the vorticity $\omega$, seen as functions). In Section~\ref{s:equivariance}, we proceed to explain why the Navier-Stokes equations are equivariant under these group actions. Equivalent group actions in Fourier space are then defined in Section~\ref{s:symFourier}, and the equivariance of $\F$ is shown. In Section~\ref{s:redsymvar}, we introduce the subspace $\XXsym$ of~$\XX$ containing the symmetric solutions, as well as a \emph{minimal} (in term of number of Fourier mode used) subspace $\XXred$ of~$\XX$ that can be used to describe these solutions. A \emph{reduced} version $\FFred$ of $\F$ is then defined on this subspace, and sufficient conditions for the existence of zeros of $\FFred$ are then given in Theorem~\ref{th:radii_pol_sym}, which mimics Theorem~\ref{th:radii_pol}. Finally, in Section~\ref{s:symmetrybounds}, we show that the bounds obtained in Section~\ref{s:estimates} are compatible with the symmetries, and readily give us the estimates needed to apply Theorem~\ref{th:radii_pol_sym}, which is then done in Section~\ref{s:application}.

\subsection{Symmetries in physical space}
\label{s:groupaction}

We will restrict our attention to solutions $(u,p)$, $u : \R^3 \times \R \to \R^3$ and $p : \R^3 \times \R  \to \R$ which are 
$2 \pi$-periodic
in space and $2 \pi / \Omega$-periodic in time. We may thus interpret $u$ and $p$ as a function on $\T^3\times \sphere^1$, where $\T^3$ is a 3-dimensional torus and $\sphere^1 = \R / (2 \pi / \Omega)$ is a circle.

We look for solutions with additional symmetries.
In particular, we will consider periodic solutions which are invariant under a symmetry group $G$, which acts on $(u,p)$ through a \emph{right} 
action $a_g$, $g \in G$, of the form
\[
  [a_g u ] (x,t) = b_g u (c_g x, d_g t), \qquad [a_g p] (x,t) = p(c_g x, d_g t),
\]
where $b_g$ is a right action on $\R^3$, while $c_g$ is a left action on $\T^3$
and $d_g$ is a left action on $\sphere^1$. We abuse the notation slightly by using $a_g$ to denote both the action on the pair $(u,p)$ and on each of its components, but it should be clear from context which one is considered.

The symmetries under consideration in this work lead to actions of the form
\begin{equation}
\label{def:ag}
[a_g u ] (x,t) = C^{T}_g u(C_g x + 2\pi\tilde{C}_g, t + 2\pi D_g /\Omega), \qquad [a_g p ] (x,t) = p(C_g x + 2\pi\tilde{C}_g, t + 2\pi D_g /\Omega),
\end{equation}
where $C_g$ is a $3\times 3$ signed permutation matrix (it represents an element of $O_h$, the symmetry group of the cube):
there is a permutation $\tau_g$ of $\{1,2,3\}$ and a map
$\rho_g: \{1,2,3\} \to \{-1,1\}$ 
such that $(C_g)_{mm'}  = \rho_g(m') \delta_{m \tau_g(m')}$, $\tilde{C}_g$ is a vector in $\R^3$ and $D_g$ a real number. An explicit description of the symmetries satisfied by the solutions considered in this work, together with the associated $C_g$, $\tilde C_g$ and $D_g$ is given in Section~\ref{s:applic_2D}.

We now describe how these actions behave with respect to differential operators.
\begin{lemma}
\label{lem:ag_diff}
Let $u : \R^4 \to \R^3$ and $p : \R^4 \to \R$ be smooth functions, and assume $a_g$ is defined as in~\eqref{def:ag}, with $C_g$ an orthogonal matrix. Then
\\[1mm]
\hspace*{5mm}\textup{(a)}
$\nabla a_g p = a_g \nabla p$,
\\[1mm]
\hspace*{5mm}\textup{(b)}
$\nabla \cdot a_g u = a_g \nabla \cdot u$,
\\[1mm]
\hspace*{5mm}\textup{(c)}
$\Delta a_g u = a_g\Delta u$,
\\[1mm]
\hspace*{5mm}\textup{(d)}
$(a_g u\cdot \nabla) a_g u = a_g (u\cdot\nabla)u$,
\\[1mm]
\hspace*{5mm}\textup{(e)}
$\nabla \times a_g u = \det(C_g)a_g\nabla\times u$.
\end{lemma}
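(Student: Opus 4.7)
The whole lemma is a direct chain-rule computation, with the orthogonality of $C_g$ doing all the cancelling. The strategy is to set $y = C_g x + 2\pi \tilde{C}_g$ and $s = t + 2\pi D_g/\Omega$, note that $\partial_{x_i} = \sum_k (C_g)_{ki} \partial_{y_k}$, and plug in. Parts (a)--(d) then reduce to elementary tensor manipulations, and only (e) requires a nontrivial algebraic identity.

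For (a), the chain rule gives $\partial_{x_i}[p(y,s)] = \sum_j (C_g)_{ji} \partial_{y_j} p(y,s) = (C_g^T \nabla p)_i(y,s)$, which is precisely $(a_g \nabla p)_i(x,t)$. For (b), writing $\nabla \cdot (a_g u) = \sum_{i,j} \partial_{x_i}[(C_g^T)_{ij} u_j(y,s)]$ and using the chain rule produces a factor $\sum_i (C_g)_{ji}(C_g)_{ki} = (C_g C_g^T)_{jk} = \delta_{jk}$, leaving $\sum_k \partial_{y_k} u_k(y,s) = a_g(\nabla \cdot u)(x,t)$. Part (c) is the same computation applied twice: $\sum_i \partial_{x_i}^2 = \sum_{k,l} (C_g C_g^T)_{kl}\partial_{y_k}\partial_{y_l} = \Delta_y$ component-wise, and then the prefactor $C_g^T$ pulls back out. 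Part (d) again boils down to an orthogonality contraction: the inner product $\sum_i (a_g u)^{(i)} \partial_{x_i} \cdot$ produces $\sum_i (C_g^T)_{im}(C_g)_{ki} = \delta_{mk}$, so the sum collapses to $\sum_m u_m(y,s) \partial_{y_m}$, and the outer $C_g^T$ from $a_g (u\cdot\nabla)u$ remains on the $u^{(l)}$ component.

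The one step that actually uses something beyond $C_g^T C_g = I$ is (e). The plan is to expand
\[
[\nabla \times a_g u]_l = \sum_{j,k,m,n} \epsilon_{ljk}\, (C_g)_{mk}\,(C_g)_{nj}\, \partial_{y_n} u_m(y,s),
\]
and to invoke the standard identity that for any $3\times 3$ matrix~$R$ one has
$\sum_{l,j,k}\epsilon_{ljk} R_{pl} R_{nj} R_{mk} = \det(R)\,\epsilon_{pnm}$. Multiplying by $(C_g^{-1})_{lp} = (C_g)_{pl}$ (using orthogonality) and summing over~$p$ gives
\[
\sum_{j,k} \epsilon_{ljk} (C_g)_{nj} (C_g)_{mk} = \det(C_g) \sum_p (C_g)_{pl}\, \epsilon_{pnm},
\]
which inserted above yields $\det(C_g) \sum_p (C_g^T)_{lp}\,(\nabla_y \times u)_p(y,s) = \det(C_g)\,[a_g (\nabla\times u)]_l(x,t)$.

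The only mildly subtle point, and the place where orthogonality (rather than just invertibility) of $C_g$ is used, is this cancellation of three $C_g$ factors against $\epsilon$. All other identities are clean bookkeeping, and the translations $2\pi\tilde{C}_g$ and $2\pi D_g/\Omega$ play no role since every operator involved is translation-invariant. Signed permutation matrices $C_g$ in $O_h$ are orthogonal, so the hypothesis is automatically satisfied in the setting used later.
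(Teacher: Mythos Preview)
Your proof is correct and follows essentially the same approach as the paper: the chain rule together with $C_g^T C_g = I$ handles (a)--(d), and (e) hinges on the fact that the triple product scales by $\det(C_g)$ under an orthogonal change of basis. The only cosmetic difference is that the paper phrases (e) coordinate-free via $C_g^T v \cdot (C_g^T\nabla \times C_g^T u) = \det(C_g^T)\det(v,\nabla,u)$ for arbitrary $v$, whereas you write out the equivalent Levi-Civita identity $\sum_{l,j,k}\epsilon_{ljk}R_{pl}R_{nj}R_{mk}=\det(R)\epsilon_{pnm}$ in index notation; these are the same statement.
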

\begin{proof}
The first four identities follow directly from the chain rule, using that the change of variable $x\mapsto C_g x + 2\pi\tilde C_g$ transforms $\nabla$ into $C^{T}_g \nabla$. The last one is slightly less straightforward. Writing $y= C_g x + 2\pi\tilde C_g$, we have
\begin{align*}
[\nabla \times a_g u](x,t) & = [C^{T}_g\nabla \times C^{T}_g u](y,t+2\pi D_g/\Omega).
\end{align*}
For any $v\in \R^3$ we then compute
\begin{align*}
C^{T}_g v \cdot \left(C^{T}_g\nabla \times C^{T}_g u\right) &= \det(C^{T}_g v,C^{T}_g\nabla,C^{T}_g u) \\
&=\det(C^{T}_g)\det(v,\nabla,u) \\
&=\det(C_g)v\cdot (\nabla\times u).
\end{align*}
Since this holds for all $v\in \R^3$, we infer
\begin{equation*}
C^{T}_g\nabla \times C^{T}_g u = \det(C_g)C^{T}_g \nabla\times u,
\end{equation*}
which yields identity \textup{(e)}.
\end{proof}

From the action $a_g$ on $u$, we can define a corresponding action $\ta_g$ on $\omega = \nabla \times u$, given by
\begin{equation}
\label{def:tildeag}
\ta_g (\nabla \times u) = \nabla \times a_g u.
\end{equation}
From~\eqref{def:ag} and Lemma~\ref{lem:ag_diff} we infer
\begin{equation}
\label{eq:tildeag}
[\ta_g \omega ] (x,t) = \det(C_g) C^{T}_g \omega (C_g x + 2\pi\tilde{C}_g, t + 2\pi D_g/\Omega).
\end{equation}
We note that $\det(C_g) = \pm 1$,
since $C_g$ is orthonormal, hence the only difference between the action of $a_g$ and $\ta_g$ is multiplication by a factor $\pm 1$ (depending on $g$).

\subsection{Invariance and equivariance}
\label{s:equivariance}

We say that $u$ is $G$-invariant under the action $a_g$ if
\[
  a_g u = u \qquad \text{for all } g\in G.
\]  
Similarly, we say that $\omega$ is $G$-invariant under the action $\ta_g$ if 
\[
  \ta_g \omega = \omega \qquad \text{for all } g\in G.
\]
If it is clear which action is meant between $\ta_g$ and $a_g$, we only speak about $G$-invariance without mentioning the action. Notice that, by Lemma~\ref{lem:ag_diff}(b), the collection of divergence free vector fields is invariant under~$G$. Analogously, having zero spatial average, denoted by $\int_{\T^3} u=0$, is also a $G$-invariant property. Since the Laplacian is invertible on the set of vector fields with zero spatial average, and it commutes with the group action (Lemma~\ref{lem:ag_diff}(c)), it follows from Lemma~\ref{lem:ag_diff}(e) that
\begin{equation}
\label{eq:agM}
a_g M \omega = M \ta_g \omega \qquad \text{for all } g\in G.
\end{equation}

Next, we mention the equivariance properties of the Navier-Stokes equations that provide intuition for the sequel. To formalize the discussion, we define the pair $\mathcal{P} = (\mathcal{P}_1,\mathcal{P}_2)$ of maps
\begin{alignat*}{1}
  \mathcal{P}_1(u,p) &\bydef \partial_t u + (u \cdot \nabla) u - \nu \Delta u  + \nabla p -  f ,\\
  \mathcal{P}_2(u,p) &\bydef \nabla \cdot u ,
\end{alignat*}
on some set $\mathbb{X}$ of smooth vector fields $u$ and scalar functions $p$.
Assuming that $f$ is $G$-invariant under the action $a_g$, it follows from Lemma~\ref{lem:ag_diff} that $\mathcal{P}$ is $G$-equivariant:
\[
  \mathcal{P}(a_g u, a_g p) = (a_g \mathcal{P}_1 (u,p),a_g \mathcal{P}_2 (u,p)) \qquad \text{ for all } g\in G \text{ and } (u,p) \in \mathbb{X}.
\]
We note that $\Phi=\Phi(u)$ defined in~\eqref{e:defPhi} is given by $\mathcal{P}_1(u,0)$, hence $\Phi$ is also $G$-equivariant: $\Phi(a_g u)=a_g \Phi(u)$.

Next, we consider the vorticity formulation and set
\[
  \mathcal{Q}(\omega) \bydef \nabla \times \mathcal{P}_1(M\omega,0) = \nabla \times \Phi(M\omega).
\]
It then follows from~\eqref{def:tildeag}, \eqref{eq:agM} and the equivariance of $\Phi$ that $\mathcal{Q}$ is $G$-equivariant under the action $\ta_g$.
As before, we simply refer to $G$-equivariance without mentioning the action, if the latter is clear from the context.
The explicit expression of $\mathcal{Q}$ is given by
\begin{equation*}
\mathcal{Q}(\omega)= \partial_t\omega +(M\omega\cdot\nabla)\omega - (w\cdot\nabla)M\omega - M\omega(\nabla\cdot\omega) - \nu\Delta\omega -\rhs.
\end{equation*}
Notice that this expression is only equal to $F(W)$ when $\omega$ is divergence free, hence a supplemental argument is required to show that $F$ is $G$-equivariant, which we provide next.

\begin{lemma}
\label{lem:equivariance_F}
$F$ defined is~\eqref{e:defFn} is $G$-equivariant, i.e.
\begin{equation*}
F(\tilde a_g W)=\tilde a_g F(W),
\end{equation*}
where $\tilde a_g W=\tilde a_g (\Omega,\omega)$ is defined as $(\Omega,\tilde a_g \omega)$.
\end{lemma}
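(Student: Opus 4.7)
My approach is to reduce the claimed equivariance of $F$ to the already-established equivariance of the curl-based operator $\Q$ by explicitly identifying the discrepancy between them. In physical space, $F(W)$ represents the function
\[
  \partial_t\omega + (M\omega\cdot\nabla)\omega - (\omega\cdot\nabla)M\omega - \nu\Delta\omega - \rhs,
\]
whereas $\Q(\omega) = \nabla\times\Phi(M\omega)$ was obtained via~\eqref{eq:curl_of_nonlin} with $u = M\omega$: since $\nabla\cdot M\omega = 0$ always but $\nabla\cdot\omega$ need not vanish, a direct comparison yields the identity
\[
  F(W) = \Q(\omega) + M\omega\,(\nabla\cdot\omega)
\]
as elements of the appropriate space of periodic functions.

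Since $\Q$ is known to be $G$-equivariant under $\ta_g$ (Section~\ref{s:equivariance}), it suffices to show that the correction $\omega \mapsto M\omega\,(\nabla\cdot\omega)$ is also $\ta_g$-equivariant. For this, I would combine~\eqref{eq:agM}, which gives $M\ta_g\omega = a_g M\omega$, with the relation $\ta_g\omega = \det(C_g)\, a_g\omega$ and Lemma~\ref{lem:ag_diff}(b) applied to $\omega$ viewed as a vector field under $a_g$, to obtain
\[
  \nabla\cdot\ta_g\omega = \det(C_g)\,(\nabla\cdot\omega)\circ\sigma_g, \qquad \sigma_g(x,t) \bydef (C_g x + 2\pi\tilde C_g,\, t+ 2\pi D_g/\Omega).
\]
Multiplying these two identities and comparing with~\eqref{eq:tildeag} applied to the vector field $M\omega(\nabla\cdot\omega)$ then yields
\[
  M\ta_g\omega \,(\nabla\cdot\ta_g\omega) = \det(C_g)\, C_g^T [M\omega(\nabla\cdot\omega)] \circ \sigma_g = \ta_g[M\omega(\nabla\cdot\omega)],
\]
as desired.

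This gives equivariance of the physical-space representative of $F(W)$ under $\ta_g$. Transferring to the Fourier-sequence level is immediate once $\ta_g$ on coefficients is defined to intertwine with the Fourier transform, as will be done in Section~\ref{s:symFourier}; preservation of the $\Omega$-component is automatic since time translation by $2\pi D_g/\Omega$ commutes with $\partial_t$. The only real subtlety I anticipate is notational: one must be careful about which quantities transform as genuine vectors (under $a_g$) and which as pseudovectors (under $\ta_g = \det(C_g)\,a_g$), and track the sign $\det(C_g) = \pm 1$ consistently throughout; beyond that, the verification is mechanical.
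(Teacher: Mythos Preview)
Your argument is correct, but it takes a more indirect route than the paper. The paper's proof is a one-liner: looking at the physical-space form~\eqref{eq:F_physical_space} of $F$, each term is checked to be $\ta_g$-equivariant directly using~\eqref{eq:agM} and Lemma~\ref{lem:ag_diff} (with the obvious bilinear extension of part~(d) to $(a_g u\cdot\nabla)a_g v = a_g[(u\cdot\nabla)v]$). In particular, the two nonlinear terms $(M\omega\cdot\nabla)\omega$ and $(\omega\cdot\nabla)M\omega$ are each individually $\ta_g$-equivariant, so there is no need to pass through $\Q$.

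Your detour through $\Q$ is perfectly valid, and it has the pleasant side effect of making explicit the identity $F(W) = \Q(\omega) + M\omega\,(\nabla\cdot\omega)$, which clarifies exactly how $F$ differs from the curl formulation when $\omega$ is not assumed divergence free. The cost is that you must separately verify the equivariance of the correction term, which in turn requires tracking the $\det(C_g)$ sign through the scalar $\nabla\cdot\omega$; this is handled correctly in your write-up. The paper's approach avoids this bookkeeping by never introducing $\Q$ at this point, but the underlying tools (Lemma~\ref{lem:ag_diff} and~\eqref{eq:agM}) are the same in both arguments.
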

\begin{proof}
Looking at~\eqref{eq:F_physical_space}, the statement follows from~\eqref{eq:agM} and Lemma~\ref{lem:ag_diff}.
\end{proof}

Finally, obviously $\mathcal{P}_1(u,p)$ is $G$-invariant if both $u$ and $p$ are $G$-invariant, and similarly for $\mathcal{Q}(\omega)$ and $F(W)$.

\begin{remark}
\label{rem:invariancep}
Concerning the pressure term in the Navier-Stokes equation,
in the appendix we describe explicitly a well-defined map $\Gamma$ (see Lemma~\ref{lem:exist_grad}) such that
\[
  \left\{ 
  \begin{array}{l}
  \nabla \times \Phi =0 \\  
  \int_{\T^3} \Phi =0 \\
  p = \Gamma \Phi
  \end{array}
  \right.
  \qquad\Longleftrightarrow \qquad
  \left\{ 
  \begin{array}{l}
  \Phi = -\nabla p \\  
  \int_{\T^3} p =0.
  \end{array}
  \right.
\]
From this it follows that $[\Gamma a_g \Phi] (x,t)=  [\Gamma \Phi] (c_g x,d_g t)$.
Hence, when recovering the pressure from the vorticity, see the proof of Lemma~\ref{lem:eq_NS_vorticity}, we can infer that $G$-invariance of $\omega$ implies $G$-invariance of $\Phi(M\omega)$, from which in turn $G$-invariance of $p=\Gamma\Phi(M\omega)$ follows.
\end{remark}

\subsection{Representation of the symmetry group in Fourier space}
\label{s:symFourier}

We recall~\eqref{eq:function_coeffs_omega} that we write the Fourier transform on the $m$-th component of the vorticity as 
\begin{equation}\label{e:omegaFourier}
  \omega^{(m)}(x,t) = \sum_{n \in \Z^4_*} 
        \c^{(m)}_n  e^{i ( \tn \cdot x  + n_4 \Omega t)}, 
\end{equation}
where $\omega_0$ is assumed to be zero. We note that, as in Section~\ref{s:setup}, it should be clear from the context whether $\omega$ is to be interpreted in physical or Fourier space. From now on we will denote $\j = (n,m) \in \Z^4_* \times \{1,2,3\}$ and write
\[
  \c_\j = \c^{(m)}_n .
\]
We define the index set
\[
  \J \bydef \Z^4_* \times \{1,2,3\},
\]
and the set of Fourier coefficients by 
\[
  \X \bydef \bigl\{ \c=(\c_\j)_{\j\in\J} : \c_j \in \C \, , \,
  \left\Vert \omega \right\Vert_\X <  \infty \bigr\} ,
  \quad\text{with}\quad
  \left\Vert \omega \right\Vert_\X \bydef \sum_{j \in \J} \weight_\j |\c_\j| ,
\]
where, for convenience, we introduce the weights
\begin{equation} \label{eq:weights_over_J}
  \weight_\j \bydef \eta^{|n|_1} \qquad\text{for } j=(n,m) \in \J.
\end{equation}
\begin{remark}
Notice that the space $\XX$ defined is Section~\ref{sec:notations} is nothing but $\C\times\X$. When looking at the action $\tilde a_g$ in Fourier space, it is enough to consider $\X$ because the action does neither act on $\Omega$ nor depend on it (see Remark~\ref{rem:formula_alpha_beta}). The reason for grouping $n$ and $m$ as a duo in $j=(n,m)$ is that the group action on the Fourier coefficients (made explicit below) permutes the index duos $j \in \J$ rather than $n \in \Z^4_*$ and $m \in \{1,2,3\}$ individually, see~\eqref{e:explicitbeta}.
\end{remark}
We define the basis vectors $\e_\j \in \X$ by
\[
  (\e_\j)_{\j'} = \delta_{n n'}\delta_{m m'},
  \qquad\text{for } \j=(n,m),\j'=(n',m') \in \J.
\]

For $g \in G$, the group actions $a_g$ and $\tilde a_g$ described in Sections~\ref{s:groupaction} and \ref{s:equivariance} correspond to a \emph{right} group action $\gamma_g$ on the Fourier coefficients satisfying
\begin{equation}\label{e:symmetryequivalence}
  [\ta_g \omega]^{(m)}(x,t)  = \sum_{n\in \Z^4_*} [\gamma_g \c]^{(m)}_n 
    e^{i ( \tn \cdot x  + n_4 \Omega t)}.
\end{equation}
This action is represented component-wise by
\begin{equation}
\label{def:gamma}
  (\gamma_g \c)_\j = \alpha_g(\j) \, \c_{\beta_g(\j)}.
\end{equation}
Here $\beta_g$ is itself a \emph{left} group action on $\J$, i.e.,
\begin{equation}\label{e:betaproduct}
 \beta_{g_1 g_2}(\j)=\beta_{g_1}(\beta_{g_2}(\j)),
\end{equation}
whereas $\alpha_g(\j) \in \{ z \in \C : |z|=1 \} $ for all $\j \in \J$. 
The product structure on $\alpha$ is given by
\begin{equation}\label{e:alphaproduct}
   \alpha_{g_1 g_2}(\j) = \alpha_{g_1}(\beta_{g_2}(\j)) \alpha_{g_2}(\j),
\end{equation}
which follows directly from $\gamma_{g_1 g_2} = \gamma_{g_2} \gamma_{g_1}$.
Note that $\alpha_g$ is \emph{not} a group action.

Obviously, by~\eqref{e:symmetryequivalence} the $G$-equivariance of $F$ in terms of the \emph{physical} symmetries $\tilde a_g$ induces a $G$-equivariance in Fourier space, in term of the action $\gamma_g$.
\begin{lemma}\label{l:Fequivariant}
Let $g \in G$. Then 
	$F(\gamma_g W)  = \gamma_g F(W)$ for all  $W=(\Omega,\omega) \in \XX$, where again we extend the action notation: $\gamma_g(\Omega,\omega)=(\Omega,\gamma_g\omega)$.
\end{lemma}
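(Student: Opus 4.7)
The plan is to obtain Lemma~\ref{l:Fequivariant} as essentially a restatement of Lemma~\ref{lem:equivariance_F} after identifying the physical-space action $\ta_g$ with its Fourier representation $\gamma_g$. Since $F$ is defined componentwise as a sequence of Fourier coefficients (see~\eqref{e:defFn}) that correspond, under the smooth-function-to-Fourier-sequence identification, to the vorticity residual appearing in~\eqref{eq:F_physical_space}, the $G$-equivariance that was established in physical space in Lemma~\ref{lem:equivariance_F} should transfer to an identical statement in Fourier space, with the only change being the notation used for the group action.

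First I would invoke~\eqref{e:symmetryequivalence}: by construction the Fourier sequence of $\ta_g \omega$ is exactly $\gamma_g$ applied to the Fourier sequence of $\omega$. More precisely, for $\omega$ with Fourier coefficients $(\omega_\j)_{\j\in\J}$, the coefficients of the transformed vorticity $\ta_g \omega$ are given by $\alpha_g(\j)\,\omega_{\beta_g(\j)} = (\gamma_g \omega)_\j$; this is exactly the content of~\eqref{def:gamma}, which was set up precisely so that~\eqref{e:symmetryequivalence} holds. Neither $\ta_g$ nor $\gamma_g$ acts on the frequency component, so the identification extends to $W=(\Omega,\omega)\in\XX$ with the convention $\gamma_g(\Omega,\omega)=(\Omega,\gamma_g\omega)$.

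Next I would apply this identification on both sides of the equality supplied by Lemma~\ref{lem:equivariance_F}. On the left-hand side, $F(\ta_g W)$, written as a sequence of Fourier coefficients, becomes $F(\gamma_g W)$. On the right-hand side, since $F(W) \in \XX_{-2,-1}$ is itself a Fourier sequence of the smooth vorticity residual, applying $\ta_g$ to that sequence in the same sense as before produces $\gamma_g F(W)$. Putting the two sides together yields $F(\gamma_g W) = \gamma_g F(W)$, which is the claim.

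I do not expect a genuine obstacle: the only point worth verifying is that every ingredient of $F$ is compatible with the identification, namely the linear term $i\Omega n_4 \omega_n + \nu\tn^2 \omega_n$, the nonlinear terms (which are convolutions representing physical-space products of $\omega$ and $M\omega$, and for which~\eqref{eq:agM} is the key fact), and the forcing $\rhs_n$. Each of these is already handled, either implicitly via the derivation of $F$ in Section~\ref{sec:zero finding problem} or explicitly through Lemma~\ref{lem:ag_diff} and~\eqref{eq:agM} in the proof of Lemma~\ref{lem:equivariance_F}, so no additional computation is required.
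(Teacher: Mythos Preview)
Your proposal is correct and matches the paper's approach: the paper does not give a formal proof of this lemma but simply remarks that by~\eqref{e:symmetryequivalence} the $G$-equivariance of $F$ under the physical action $\ta_g$ (Lemma~\ref{lem:equivariance_F}) induces the $G$-equivariance under $\gamma_g$ in Fourier space. Your argument spells out exactly this identification.
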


\begin{remark}
\label{rem:formula_alpha_beta}
In terms of the notation introduced in~\eqref{def:ag},
the expressions for $\beta_g$ and $\alpha_g$ are, with $\j=(n,m)=((\tn,n_4),m)$,
\begin{equation}\label{e:explicitbeta}
  \beta_g((\tn,n_4),m) = (C_g \tn , n_4 , \tau_g(m)) 
\end{equation}
and
\[
  \alpha_g((\tn,n_4),m) = 
  \det(C_g) \rho_g(m)  e^{2i\pi( \tn \cdot C^T_g \tC_g + n_4 D_g)} .
\]
Explicit formulas for the symmetries under consideration in this work are given in Section~\ref{s:applic_2D}. 

Two straightforward properties that will be useful later are 
\begin{subequations}
\label{e:ntominusn}
\begin{equation}
  \text{if } (n',m')=\beta_g(n,m) \quad\text{then}\quad  \beta(-n,m)=(-n',m),
\end{equation}
and
\begin{equation}
  \alpha_g(-n,m) = (\alpha_g(n,m))^*.
\end{equation}
\end{subequations}
\end{remark}

\begin{remark}
The real-valuedness of $u$ provides another symmetry, 
which in Fourier space corresponds to invariance under the transformation $\gamma_*$ introduced in Definition~\ref{def:complex_conj}.
One way to deal with this symmetry is to consider the extended symmetry group $\hG$ generated by $G$ and~$\gamma_*$. However, one would have to interpret $\alpha_*(\j)$ as the complex conjugation operator on $S^1 \subset \C$, which is not a multiplication operator, and it is also not differentiable, which would cause problems later on. One solution is to split $\c$ into real and imaginary parts, but that is not the way we proceed here. Instead, we stick with the symmetry group $G$ and consider the action of $\gamma_*$ separately.
We note that $\gamma_*$ commutes with $\gamma_g$:
\begin{equation}\label{e:gammaggammastar}
  \gamma_* \gamma_g = \gamma_g \gamma_*
  \qquad \text{for all } g\in G,
\end{equation}
which follows either from straightforward symmetry consideration in physical space, or directly in term of $\alpha_g$ and $\beta_g$ from~\eqref{e:ntominusn}.
\end{remark}

\subsection{Reduction to symmetry variables}
\label{s:redsymvar}

The set of Fourier coefficients  that are  $G$-invariant is given by
\begin{equation*}
  \Xsym \bydef \{ \c \in \X  \,:\, \gamma_g  \c =\c \text{ for all } g \in G \}.
\end{equation*}
Again we have a correspondence with the physical space, by~\eqref{e:symmetryequivalence} elements of $\Xsym$ correspond to functions that are $G$-invariant via the action $\tilde a_g$.
\begin{lemma}\label{l:ctou}
Let $W=(\Omega,\omega) \in \C\times\Xsym$. Then $\omega(x,t)$ given by~\eqref{e:omegaFourier} satisfies $[\tilde a_g \omega](x,t)=\omega(x,t)$ for all $g\in G$.
\end{lemma}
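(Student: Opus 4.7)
The plan is to exploit directly the correspondence \eqref{e:symmetryequivalence} between the action $\tilde a_g$ on functions and the action $\gamma_g$ on Fourier coefficients, together with the uniqueness of Fourier series.

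More precisely, given $W=(\Omega,\omega)\in\C\times\Xsym$, I would first recall that the function $\omega(x,t)$ in \eqref{e:omegaFourier} is well-defined and (by Remark~\ref{rem:analyticity}) analytic, so in particular its Fourier coefficients are uniquely determined. By the definition of $\gamma_g$ and the identity \eqref{e:symmetryequivalence}, the Fourier coefficients of the function $[\tilde a_g \omega](x,t)$ are exactly $(\gamma_g \omega)_j$. Since $\omega \in \Xsym$, we have $\gamma_g \omega = \omega$ by definition, and so the Fourier coefficients of $\tilde a_g \omega$ coincide with those of $\omega$. By uniqueness of the Fourier representation in this weighted $\ell^1$ space, $\tilde a_g \omega = \omega$ as functions of $(x,t)$.

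The only genuine point to check is that \eqref{e:symmetryequivalence} was established (or can be established) as an identity between functions, rather than just as the \emph{definition} of $\gamma_g$; this is where the calculation in Remark~\ref{rem:formula_alpha_beta} (expressing $\beta_g$ and $\alpha_g$ in terms of $C_g$, $\tilde C_g$ and $D_g$) enters, by matching the Fourier expansion of $[\tilde a_g \omega](x,t) = \det(C_g) C_g^T \omega(C_g x + 2\pi \tilde C_g, t + 2\pi D_g / \Omega)$ term by term with the series $\sum_n [\gamma_g \omega]^{(m)}_n e^{i(\tilde n \cdot x + n_4 \Omega t)}$. Since this identification has already been set up in \eqref{e:symmetryequivalence}--\eqref{def:gamma}, the proof reduces to two lines. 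I therefore do not expect a serious obstacle; the lemma is essentially a bookkeeping statement verifying that the notion of $G$-invariance in Fourier space indeed matches the one in physical space.
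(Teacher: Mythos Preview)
Your proposal is correct and matches the paper's approach: the paper does not give an explicit proof of this lemma, but the sentence preceding it (``by~\eqref{e:symmetryequivalence} elements of $\Xsym$ correspond to functions that are $G$-invariant via the action $\tilde a_g$'') indicates precisely the argument you outline. One minor quibble: your appeal to Remark~\ref{rem:analyticity} for well-definedness requires $\eta>1$ and $\Omega\in\R$, neither of which is assumed here; it suffices instead to note that $\omega\in\X$ gives absolute convergence of the Fourier series (at least for real $\Omega$), so that the coefficient identification~\eqref{e:symmetryequivalence} is justified.
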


In this section we study the properties of $\Xsym$.
Some of the arguments in this section are essentially the same as the ones in~\cite[Section~3.2]{JBJFsymmetry}. Nevertheless we repeat the main arguments and definitions here, since the setting and notation are slightly different. The short proofs of some of the lemmas are also transcribed here to keep the current paper self-contained. 

We first list some properties of $\alpha$ that will be useful in what follows.
\begin{lemma}{\cite[Lemma~3.5]{JBJFsymmetry}}\label{l:alpha}
Let $\j \in \J$ and $g,\tg\in G$. \\[1mm]
\hspace*{5mm}\textup{(a)} $\alpha_g(\j)\,\alpha_{g^{-1}}(\beta_g(\j)) =1$.\\[1mm]
\hspace*{5mm}\textup{(b)} If $\beta_g(\j)=\beta_{\tg}(\j)$ and $\alpha_{\tg^{-1}g}(\j)=1$, then $\alpha_{g}(\j)=\alpha_{\tg}(\j)$.\\[1mm]
\hspace*{5mm}\textup{(c)} If $\beta_g(\j)=\j$, then 
$\alpha_{\tg g \tg^{-1}}(\beta_{\tg}(\j))=\alpha_{g}(\j) $.
\end{lemma}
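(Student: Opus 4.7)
The plan is to derive all three statements as formal consequences of the product rule~\eqref{e:alphaproduct} together with the fact that $\beta$ is a left group action~\eqref{e:betaproduct}. The only preliminary fact I need is that $\alpha_e(\j)=1$ for the identity element $e\in G$, which follows from $\gamma_e=\mathrm{Id}$ (since $\gamma_e$ represents the identity of $G$ acting on $\X$) together with $\beta_e(\j)=\j$: applying $(\gamma_e\c)_\j=\alpha_e(\j)\c_{\beta_e(\j)}=\alpha_e(\j)\c_\j$ to an arbitrary $\c$ forces $\alpha_e(\j)=1$.

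For (a), I would simply specialize~\eqref{e:alphaproduct} to $g_1=g^{-1}$, $g_2=g$, giving
\[
1=\alpha_e(\j)=\alpha_{g^{-1}g}(\j)=\alpha_{g^{-1}}(\beta_g(\j))\,\alpha_g(\j).
\]
For (b), I would write $g=\tg\cdot(\tg^{-1}g)$ and apply~\eqref{e:alphaproduct}, obtaining
\[
\alpha_g(\j)=\alpha_{\tg}\bigl(\beta_{\tg^{-1}g}(\j)\bigr)\,\alpha_{\tg^{-1}g}(\j).
\]
Using that $\beta$ is a left action together with the hypothesis $\beta_g(\j)=\beta_{\tg}(\j)$, the inner index simplifies as $\beta_{\tg^{-1}g}(\j)=\beta_{\tg^{-1}}\beta_g(\j)=\beta_{\tg^{-1}}\beta_{\tg}(\j)=\beta_e(\j)=\j$, and the second factor is $1$ by hypothesis, so $\alpha_g(\j)=\alpha_{\tg}(\j)$.

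For (c), I would apply~\eqref{e:alphaproduct} twice to decompose $\tg g\tg^{-1}=\tg\cdot(g\tg^{-1})$ and then $g\tg^{-1}=g\cdot\tg^{-1}$, evaluated at $\beta_{\tg}(\j)$. Using $\beta_g(\j)=\j$ to reduce $\beta_{g\tg^{-1}}(\beta_{\tg}(\j))=\beta_g(\j)=\j$ and $\beta_{\tg^{-1}}(\beta_{\tg}(\j))=\j$, this yields
\[
\alpha_{\tg g\tg^{-1}}(\beta_{\tg}(\j))=\alpha_{\tg}(\j)\,\alpha_g(\j)\,\alpha_{\tg^{-1}}(\beta_{\tg}(\j)).
\]
The outer product $\alpha_{\tg}(\j)\alpha_{\tg^{-1}}(\beta_{\tg}(\j))$ equals $1$ by part (a) applied to $\tg$, which reduces the right-hand side to $\alpha_g(\j)$ as required. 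I do not expect any real obstacle: the statements are pure cocycle manipulations, and the only thing to keep careful track of is the order of composition (since $\beta$ is a left action and $\gamma$ is a right action, the product formula for $\alpha$ has a specific asymmetric form that must be applied in the correct direction).
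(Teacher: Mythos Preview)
Your proof is correct and follows essentially the same route as the paper's: all three parts are derived from the cocycle identity~\eqref{e:alphaproduct} and the left-action property~\eqref{e:betaproduct}, with part~(a) used to cancel the outer factors in part~(c). The only cosmetic difference is that in~(c) you factor $\tg g\tg^{-1}$ as $\tg\cdot(g\cdot\tg^{-1})$ whereas the paper uses $(\tg\cdot g)\cdot\tg^{-1}$; both lead to the same three-factor expression, and your explicit justification of $\alpha_e(\j)=1$ is a nice addition that the paper leaves implicit.
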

\begin{proof}
For part (a) we use~\eqref{e:alphaproduct} to infer that
\[
  1 = \alpha_e(\j) = \alpha_{g^{-1}g}(\j)
  = \alpha_{g^{-1}}(\beta_g(\j)) \, \alpha_g(\j) .
\]
For part (b) we write $g=\tg \tg^{-1}g$. From the first assumption and~\eqref{e:betaproduct} if follows that $\beta_{\tg^{-1}g}(\j)=j$.
By applying~\eqref{e:alphaproduct} to $g_1=\tg$ and $g_2=\tg^{-1}g$, we see that
the second assumption implies that
\[
  \alpha_{g}(\j)=\alpha_{\tg \tg^{-1}g}(\j)
  = \alpha_{\tg}(\beta_{\tg^{-1}g}(\j)) \, \alpha_{\tg^{-1}g}(\j) = 
  \alpha_{\tg}(\j) .
\]
For part (c) we write $\j'=\beta_{\tg}(\j)$ and 
apply~\eqref{e:alphaproduct} twice to obtain
\begin{equation*}
  \alpha_{\tg g \tg^{-1}}(\j') = 
  \alpha_{\tg}(\beta_g(\beta_{\tg^{-1}}(\j'))) \, 
  \alpha_{g}(\beta_{\tg^{-1}}(\j')) \, \alpha_{\tg^{-1}}(\j') 
    = \alpha_{\tg}(\j) \, \alpha_{g}(\j) \, \alpha_{\tg^{-1}}(\j')
	= \alpha_g(\j),
\end{equation*} 
where the final equality follows from part (a).
\end{proof}

The symmetry group $G$ may be exploited to reduce the number of independent variables of elements in $\Xsym$. 
From now on we use the notation
\[
  g \acts \j \bydef \beta_g(\j) \qquad \text{for } g \in G.
\]
For any $\j\in\J$ we define the stabilizer
\[
  G_\j \bydef \{ g \in G \,:\, g \acts \j = \j \},
\]
and the orbit
\[
  G \acts \j \bydef \{ g \acts \j \,:\, g \in G \}.
\]
\begin{remark}[Orbit-stabilizer]\label{r:orbitstabilizer}
The orbit-stabilizer theorem implies that $|G_{\j'}|=|G_\j|$ for all $\j' \in G \acts \j$, where $| \cdot |$ denotes the cardinality. 
Indeed, stabilizers of different elements in  an orbit are related by conjugacy,
and $|G|=|G_\j| \cdot |G \acts \j|$. More generally, when $q$ is a function from $\J$, or from subset thereof that is $G$-invariant under the action $\beta_g$, to some linear space, then we have 
\begin{equation}\label{e:equaltime}
  \sum_{g \in G} q(g \acts \j) = |G_\j| \sum_{\j' \in G \acts \j} q(\j')
\qquad\text{for any } \j \in \J.
\end{equation}
\end{remark}

\begin{lemma}{\cite[Lemma~3.8]{JBJFsymmetry}}\label{l:alphadichotomy}
Let $\j\in\J$ be arbitrary. We have the following dichotomy:\\
\hspace*{5mm}\textup{(a)} either $\alpha_g(\j)=1$ for all $g \in G_{\j}$;\\
\hspace*{5mm}\textup{(b)} or $\sum_{g \in G_{\j}} \alpha_g(\j)=0$.
\end{lemma}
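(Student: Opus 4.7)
The plan is to observe that the map $\chi_\j : G_\j \to \sphere^1 \subset \C^*$ defined by $\chi_\j(g) \bydef \alpha_g(\j)$ is a group homomorphism (i.e.\ a character of the stabilizer $G_\j$). Once this is established, the dichotomy is exactly the standard fact that the sum of a character of a finite group either adds up to $|G_\j|$ (when the character is trivial, case (a)) or vanishes (case (b)).

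To verify the homomorphism property, I would apply the cocycle relation~\eqref{e:alphaproduct} to $g_1,g_2\in G_\j$. Since $g_2 \in G_\j$ means $\beta_{g_2}(\j)=\j$, the relation reduces to
\begin{equation*}
  \alpha_{g_1 g_2}(\j) \;=\; \alpha_{g_1}(\beta_{g_2}(\j))\,\alpha_{g_2}(\j) \;=\; \alpha_{g_1}(\j)\,\alpha_{g_2}(\j),
\end{equation*}
so $\chi_\j$ is multiplicative. (Lemma~\ref{l:alpha}(a) with $g\in G_\j$ confirms in addition that $\chi_\j(g^{-1})=\chi_\j(g)^{-1}$, so inverses are handled automatically.)

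For the dichotomy itself, if $\chi_\j$ is trivial then case (a) holds. Otherwise, there exists $g_0 \in G_\j$ with $\alpha_{g_0}(\j)\ne 1$. Reindexing the sum using the bijection $g\mapsto g_0 g$ on $G_\j$ and the homomorphism property gives
\begin{equation*}
  \alpha_{g_0}(\j) \sum_{g\in G_\j} \alpha_g(\j) \;=\; \sum_{g\in G_\j} \alpha_{g_0 g}(\j) \;=\; \sum_{g\in G_\j} \alpha_g(\j),
\end{equation*}
so $(\alpha_{g_0}(\j)-1)\sum_{g\in G_\j}\alpha_g(\j)=0$, which yields case (b).

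There is no real obstacle here: the only subtle point is noticing that the relation \eqref{e:alphaproduct} simplifies precisely on the stabilizer, which is exactly what turns the cocycle $\alpha$ into an honest character. The rest is the elementary character-sum orthogonality argument.
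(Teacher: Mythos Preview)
Your proof is correct and follows essentially the same approach as the paper: both first observe that the cocycle relation~\eqref{e:alphaproduct} collapses to a genuine homomorphism $G_\j \to S^1$ when restricted to the stabilizer, and then invoke a standard character-sum argument. The only cosmetic difference is that the paper phrases the final step via the orbit-stabilizer theorem (identifying the image as the $N$-th roots of unity and summing those), whereas you use the equivalent reindexing trick $g\mapsto g_0 g$; your version is arguably more direct.
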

\begin{proof}
Fix $\j\in\J$.
We see from~\eqref{e:alphaproduct} that 
$\alpha_{g_1 g_2}(\j)=\alpha_{g_1}(\j)\alpha_{g_2}(\j)$ for all $g_1,g_2 \in G_\j$.
Hence we can interpret $\alpha_g(\j)$ as a group action of the stabilizer subgroup $G_\j$, acting by multiplication on the unit circle 
$S^1 = \{ z \in \C : |z|=1 \}$.
We consider the stabilizer of $1 \in S^1$:
\[
  H_1 \bydef \{ g \in G_\j : \alpha_g(\j) =1  \},
\]
and its orbit 
\[
  O_1 \bydef \{\alpha_g(\j) : g \in G_\j \}.
\]
By the orbit-stabilizer theorem (we use Remark~\ref{r:orbitstabilizer}, but now for the $G_\j$-action $\alpha_g(\j)$)
\begin{equation}\label{e:sumalpha}
  \sum_{g \in G_\j} \alpha_g(\j) = |H_1| \sum_{z \in O_1} z.
\end{equation}
The set $O_1 \subset S^1$ is invariant under multiplication and division. In particular, if $|O_1|=N \in \mathbb{N}$, then $O_1 = \{ e^{2\pi i n/N} : n=0,1,\dots,N-1 \}$.
If $N=1$ we have $H_1=G_\j$ and alternative (a) follows, whereas
if $N>1$ then we see that $\sum_{z \in O_1} z = \sum_{n=0}^{N-1} e^{2\pi i n/N}=0$, hence we conclude from~\eqref{e:sumalpha} that alternative (b) holds.
\end{proof}

The indices for which alternative (b) in Lemma~\ref{l:alphadichotomy} applies are denoted by
\[
  \Jtriv \bydef \Bigl\{ \j\in\J \,:\, \sum_{g \in G_\j} \alpha_g(\j) = 0 \Bigr\}  .
\]
It follows from the next lemma, which is a slight generalization of Lemma~\ref{l:alphadichotomy}, that the set $\Jtriv$ is invariant under $G$.
\begin{lemma}{\cite[Lemma~3.10]{JBJFsymmetry}}\label{l:alphadichotomy2}
Let $\j\in\J$ be arbitrary. We have the following dichotomy:\\
\hspace*{5mm}\textup{(a)} either $\alpha_g(\j')=1$ for all $g \in G_{\j'}$ and all $\j' \in G \acts \j$;\\
\hspace*{5mm}\textup{(b)} or $\sum_{g \in G_{\j'}} \alpha_g(\j')=0$ for all $\j' \in G \acts \j$.
\end{lemma}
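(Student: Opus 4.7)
The plan is to reduce Lemma~\ref{l:alphadichotomy2} to Lemma~\ref{l:alphadichotomy} by showing that the alternative which holds at a given $\j$ is preserved under moving to any element of the orbit $G \acts \j$. The key algebraic ingredient is the conjugacy relation between stabilizers: if $\j' = \tg \acts \j$ for some $\tg \in G$, then $G_{\j'} = \tg G_\j \tg^{-1}$, so the map $h \mapsto \tg h \tg^{-1}$ is a bijection from $G_\j$ onto $G_{\j'}$.

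First I would invoke Lemma~\ref{l:alpha}(c), which gives exactly the identity
\[
\alpha_{\tg h \tg^{-1}}(\beta_{\tg}(\j)) = \alpha_h(\j)
\qquad\text{for every } h \in G_\j.
\]
Combined with $\j' = \beta_{\tg}(\j)$ and the bijection $g = \tg h \tg^{-1}$ between $G_\j$ and $G_{\j'}$, this shows that the multiset $\{ \alpha_g(\j') : g \in G_{\j'} \}$ coincides with $\{ \alpha_h(\j) : h \in G_\j \}$. In particular
\[
\sum_{g \in G_{\j'}} \alpha_g(\j') = \sum_{h \in G_\j} \alpha_h(\j),
\]
and, crucially, $\alpha_g(\j')=1$ for all $g \in G_{\j'}$ if and only if $\alpha_h(\j)=1$ for all $h \in G_\j$.

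Applying Lemma~\ref{l:alphadichotomy} at $\j$ yields a dichotomy: either (a) $\alpha_h(\j)=1$ for all $h\in G_\j$, or (b) $\sum_{h \in G_\j} \alpha_h(\j)=0$. By the observation above, whichever alternative holds at $\j$ propagates to every $\j' \in G \acts \j$, giving the claim. Since the reasoning is entirely transparent once Lemma~\ref{l:alpha}(c) is in hand, there is no real obstacle; the only thing worth checking carefully is that $h \mapsto \tg h \tg^{-1}$ really is a bijection $G_\j \to G_{\j'}$, which follows from $\beta_{\tg h \tg^{-1}}(\j') = \beta_{\tg}(\beta_h(\beta_{\tg^{-1}}(\j'))) = \beta_{\tg}(\beta_h(\j)) = \beta_{\tg}(\j) = \j'$ together with the standard inverse $g \mapsto \tg^{-1} g \tg$.
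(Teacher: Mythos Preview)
Your proof is correct and follows essentially the same route as the paper: conjugation $h\mapsto \tg h \tg^{-1}$ identifies $G_\j$ with $G_{\j'}$, Lemma~\ref{l:alpha}(c) transports the values $\alpha_h(\j)$ to $\alpha_g(\j')$, and the dichotomy then follows from Lemma~\ref{l:alphadichotomy}. Your observation that the multisets $\{\alpha_h(\j):h\in G_\j\}$ and $\{\alpha_g(\j'):g\in G_{\j'}\}$ actually coincide is slightly more than the paper states (which only records the equality of sums), but it is correct and makes alternative~(a) transparent.
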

\begin{proof}
	Let $\j\in\J$ and $\j' \in G \acts \j$.
	Let $\tg \in G$ be such that $\tg \acts \j=\j'$.
	Then a conjugacy between $G_\j$ and $G_{\j'}$ is given by $g \to \tg g \tg^{-1}$. 
	It follows from Lemma~\ref{l:alpha}(c) that 
\[
  \sum_{g \in G_{\j'}} \alpha_g(\j') = 
  \sum_{g \in G_\j} \alpha_{\tg g \tg^{-1}} (\j') =
  \sum_{g \in G_\j} \alpha_g(\j).
\]
The assertion now follows from Lemma~\ref{l:alphadichotomy}.
\end{proof}	
	
When considering $\c \in \Xsym$,
the indices $\j$ in $\Jtriv$ are the ones for which $\c_\j$ necessarily vanishes.
\begin{lemma}{\cite[Lemma~3.11]{JBJFsymmetry}}\label{l:zeroindices}
  Let $\c \in \Xsym$. Then $\c_\j=0$ for all $\j \in \Jtriv$.
\end{lemma}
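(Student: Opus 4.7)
The plan is to exploit the invariance $\gamma_g\c=\c$ directly on stabilizer elements. Fix $\j\in\Jtriv$. For any $g\in G_\j$, by definition of the stabilizer $\beta_g(\j)=\j$, so the equivariance identity $(\gamma_g\c)_\j=\alpha_g(\j)\c_{\beta_g(\j)}$ combined with $\gamma_g\c=\c$ gives
\[
  \c_\j = \alpha_g(\j)\,\c_\j \qquad\text{for every } g\in G_\j.
\]

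Summing this identity over all $g\in G_\j$ produces
\[
  |G_\j|\,\c_\j = \Bigl(\sum_{g\in G_\j}\alpha_g(\j)\Bigr)\c_\j.
\]
Since $\j\in\Jtriv$, the bracketed sum vanishes by definition, so $|G_\j|\,\c_\j=0$. The stabilizer always contains the identity, hence $|G_\j|\geq 1$, and we conclude $\c_\j=0$.

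There is no real obstacle here: the proof is essentially a one-line application of the definitions together with the characterization of $\Jtriv$ coming from Lemma~\ref{l:alphadichotomy}. The only thing to double-check is that summing the scalar identity $\c_\j=\alpha_g(\j)\c_\j$ over $g\in G_\j$ is legitimate, which is immediate since $G_\j$ is finite (as a subgroup of the finite group $G$).
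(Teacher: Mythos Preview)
Your proof is correct and follows essentially the same approach as the paper: both fix $\j\in\Jtriv$, use the invariance $(\gamma_g\c)_\j=\c_\j$ together with $\beta_g(\j)=\j$ for $g\in G_\j$ to obtain $\c_\j=\alpha_g(\j)\c_\j$, sum over the stabilizer, and conclude from $\sum_{g\in G_\j}\alpha_g(\j)=0$ that $\c_\j=0$.
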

\begin{proof}
Fix $\j\in\J$.
For any $\c \in \Xsym$ we have in particular $[\gamma_g \c]_\j = \c_\j$
for all $g \in G_\j$. By summing over $g \in G_\j$ and using that $g \acts \j=\j$ for 
$g \in G_\j$, we obtain
\[
 |G_\j| \, \c_\j =  \sum_{g \in G_\j} \c_\j = 
 \sum_{g \in G_\j} (\gamma_g \c)_\j  =
 \sum_{g \in G_\j} \alpha_g(\j) \c_\j =
 \c_\j  \sum_{g \in G_\j} \alpha_g(\j) .
\]
If $\j \in \Jtriv$ then the right hand side vanishes, hence
$\c_\j=0$.
\end{proof}

Lemma~\ref{l:zeroindices}
implies that 
\[
  \Xsym \subset \{ \c \in \X \,:\, \c_\j=0 \text{ for all } \j \in \Jtriv \}.
\]
In other words, we may restrict attention to the Fourier coefficients corresponding to indices in the complement 
\[
  \Jsym \bydef \J \setminus \Jtriv .
\]

\begin{remark}\label{r:Zcode}
On $S^1$ we have $z^{-1}=z^*$. In particular, Lemma~\ref{l:alphadichotomy} implies that
\begin{equation}\label{e:alphainverse}
  \sum_{g \in G_{\j}} \alpha^{-1}_g(\j) = \left(\sum_{g \in G_{\j}} \alpha_g(\j)\right)^* =  
  \begin{cases} 
	0   &  \text{for } \j \in \Jtriv , 	 \\
  |G_\j| &  \text{for } \j \in \Jsym . 	  
  \end{cases}
\end{equation}
This identity is used, through Lemma~\ref{l:averageunit}, to extract both the set $\Jtriv$ and the values $|G_\j|$ for $\j \in \Jsym$ from what is computed in the code, see Remark~\ref{r:coding}.
\end{remark}

For $\c \in \Xsym$ the coefficients $\c_\j$ with $\j \in \Jsym$ are not all independent. 
To take advantage of this, we choose a fundamental domain $\Jdom$ 
of $G$ in $\J$,
i.e., $\Jdom$ contains precisely one element of each group orbit.
The arguments below are independent of which fundamental domain one chooses. 
We now define the set of \emph{symmetry reduced indices} as
 \[
   \Jred \bydef  \Jdom \cap \Jsym,
 \]
 and the space of \emph{symmetry reduced variables} as
 \[
    \Xred \bydef \bigl\{ \phi=(\b_\j)_{\j \in \Jred} \,:\, \b_\j \in \C \,,\, \left\Vert \phi \right\Vert_{\Xred}  < \infty \bigr\},
	\quad\text{with}\quad
	\left\Vert \phi \right\Vert_{\Xred}  \bydef 
	\sum_{\j \in \Jred}   |G \acts \j| \, \weight_\j  |\b_\j| \, .
 \]
In slight abuse of notation we will also interpret $\e_\j$ with $\j \in \Jred$
as elements of~$\Xred$. We may then interpret $\Xred$ as a subspace of $\X$ by writing $\b = \sum_{\j \in \Jred} \b_j \e_j$ for $\phi \in \Xred$.

The action of $\gamma_g$ on basis vectors is 
\[
  ( \gamma_g \e_\j )_{\j'} = \alpha_g(\j') \delta_{\j \beta_g(\j')}
    = \alpha_g(\j') \delta_{\beta_{g^{-1}}(\j) \j'} =
	\alpha_g(\j') (\e_{g^{-1} \acts \j})_{\j'} \, ,
\]
hence
\begin{equation}\label{e:actiononbasis}
  \gamma_g \e_\j = \alpha_g (g^{-1} \acts \j) \e_{g^{-1} \acts \j} \, \qquad\text{for all }\j \in\J.
\end{equation}

Before we specify the dependency of the coefficients $\{\c_\j\}_{\j \in  \Jsym}$ on the symmetry reduced variables $\{\c_\j\}_{\j \in \Jred}$ for $\c \in \Xsym$,
we derive some additional properties of $\alpha_g(\j)$ for $\j \in \Jsym$.
\begin{lemma}{\cite[Lemma 3.13]{JBJFsymmetry}}\label{l:talpha} 
Let $g_1,g_2 \in G$ and $\j \in \Jsym$.
If $g_1 \acts \j = g_2 \acts \j$ then \mbox{$\alpha_{g_1}(\j)=\alpha_{g_2}(\j)$}.
\end{lemma}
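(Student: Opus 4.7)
The plan is to combine Lemma~\ref{l:alphadichotomy} (which guarantees that on $\Jsym$ the stabilizer action of $\alpha$ is trivial) with Lemma~\ref{l:alpha}(b) (which converts this triviality into the desired equality of two $\alpha$ values).

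First, I would observe that the hypothesis $\beta_{g_1}(\j)=\beta_{g_2}(\j)$ is equivalent, via the left action property~\eqref{e:betaproduct}, to $g_2^{-1}g_1 \in G_\j$. Next, since $\j\in\Jsym = \J\setminus\Jtriv$, Lemma~\ref{l:alphadichotomy} rules out alternative (b) for $\j$, so alternative (a) holds, i.e.\ $\alpha_g(\j)=1$ for every $g\in G_\j$. Applying this to $g=g_2^{-1}g_1$ yields $\alpha_{g_2^{-1}g_1}(\j)=1$.

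Finally, with $\beta_{g_1}(\j)=\beta_{g_2}(\j)$ and $\alpha_{g_2^{-1}g_1}(\j)=1$ in hand, Lemma~\ref{l:alpha}(b) (taking $g=g_1$, $\tg=g_2$) immediately gives $\alpha_{g_1}(\j)=\alpha_{g_2}(\j)$, completing the proof. There is no real obstacle here: the lemma is essentially a direct corollary of the dichotomy lemma and the cocycle-type identity for $\alpha$, and the only subtlety is recognizing that the assumption $\j\in\Jsym$ is precisely what is needed to force the stabilizer action $g\mapsto \alpha_g(\j)$ on $G_\j$ to be trivial rather than summing to zero.
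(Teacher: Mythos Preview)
Your proposal is correct and follows essentially the same approach as the paper: both proofs use the left action property to deduce $g_2^{-1}g_1\in G_\j$, invoke Lemma~\ref{l:alphadichotomy}(a) (since $\j\in\Jsym$) to obtain $\alpha_{g_2^{-1}g_1}(\j)=1$, and then apply Lemma~\ref{l:alpha}(b) to conclude. The paper's version is simply more terse.
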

\begin{proof}
	Since $g_2^{-1} g_1 \acts \j = \j$ and $\j \in \Jsym$
	we have $\alpha_{g_2^{-1}g_1} (\j) =1$ by Lemma~\ref{l:alphadichotomy}(a).
	An application of Lemma~\ref{l:alpha}(b) concludes the proof.
\end{proof}

\begin{definition}\label{def:tgtalpha}
Let $\j$ by any element of $\Jsym$ and $\j'$ any element in its orbit 
$ G \acts \j$. 
We can choose an $\tg = \tg(\j,\j') \in G$ such that $\tg \acts \j=\j'$.
For such $\j$ and $\j'$ we define 
\begin{equation}\label{e:deftalpha}
  \talpha(\j,\j') \bydef \alpha^{-1}_{\tg(\j,\j')}(\j) 
  \qquad\text{for } \j \in \Jsym \text{ and } \j' \in G \acts \j.
\end{equation}
This is independent of the choice of $\tg$ by Lemma~\ref{l:talpha},
and 
\begin{equation}\label{e:talphaidentity}
  \alpha^{-1}_g(\j) = \talpha(\j,g \acts \j) 
  \qquad\text{for all } \j \in \Jsym \text{ and } g \in G.
\end{equation}
\end{definition}

This $\talpha$ will be used to define the natural map $\Sigma$ from $\Xred$ to $\Xsym$, defined below in~\eqref{e:sigma}. It also appears in the description of the action of $\sum_{g\in G} \gamma_g$ on unit elements.
\begin{lemma}\label{l:averageunit}
We have
\[
\sum_{g\in G} \gamma_g \e_j = \begin{cases}
0 & \text{for } j \in \Jtriv, \\
|G_{j}| \sum_{j' \in G \acts j}  \talpha(j,j') \, \e_{j'} 
& \text{for } j \in \Jsym .
\end{cases} 
\]
\end{lemma}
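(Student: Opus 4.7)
The plan is to expand $\sum_{g \in G} \gamma_g \e_j$ using the explicit action on basis vectors, then reorganize the sum by orbit elements and apply the dichotomy from Lemma~\ref{l:alphadichotomy}.

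First I would apply formula~\eqref{e:actiononbasis} to obtain
\begin{equation*}
\sum_{g \in G} \gamma_g \e_j
= \sum_{g \in G} \alpha_g(g^{-1} \acts j) \, \e_{g^{-1} \acts j}
= \sum_{h \in G} \alpha_{h^{-1}}(h \acts j) \, \e_{h \acts j},
\end{equation*}
where I substituted $h = g^{-1}$. By Lemma~\ref{l:alpha}(a) applied with $g = h$, we have $\alpha_{h^{-1}}(h \acts j) = \alpha_h(j)^{-1}$, so
\begin{equation*}
\sum_{g \in G} \gamma_g \e_j = \sum_{h \in G} \alpha_h^{-1}(j) \, \e_{h \acts j}.
\end{equation*}

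Next, I would partition the sum according to the orbit of $j$. For each $j' \in G \acts j$, fix some $\tg = \tg(j,j') \in G$ with $\tg \acts j = j'$; then the set of $h \in G$ satisfying $h \acts j = j'$ is exactly the coset $\tg \, G_j$. Using the cocycle relation~\eqref{e:alphaproduct} together with $\beta_g(j) = j$ for $g \in G_j$, we get $\alpha_{\tg g}(j) = \alpha_{\tg}(j) \, \alpha_g(j)$ for every $g \in G_j$, hence
\begin{equation*}
\sum_{h \acts j = j'} \alpha_h^{-1}(j)
= \alpha_{\tg}^{-1}(j) \sum_{g \in G_j} \alpha_g^{-1}(j).
\end{equation*}
This yields the factorization
\begin{equation*}
\sum_{g \in G} \gamma_g \e_j
= \left(\sum_{g \in G_j} \alpha_g^{-1}(j)\right)
\sum_{j' \in G \acts j} \alpha_{\tg(j,j')}^{-1}(j) \, \e_{j'}.
\end{equation*}

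Finally, I would apply the dichotomy. By Remark~\ref{r:Zcode} (equation~\eqref{e:alphainverse}), the scalar prefactor $\sum_{g \in G_j} \alpha_g^{-1}(j)$ equals $0$ if $j \in \Jtriv$, giving the first case. If $j \in \Jsym$, it equals $|G_j|$, and by Definition~\ref{def:tgtalpha} we have $\alpha_{\tg(j,j')}^{-1}(j) = \talpha(j,j')$ (independent of the choice of $\tg$, by Lemma~\ref{l:talpha}), producing the second case.

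There is no serious obstacle here; the argument is essentially a bookkeeping exercise combining the cocycle identity for $\alpha$, the orbit-stabilizer decomposition, and the already-established dichotomy. The only subtlety to be careful about is that the factorization step uses $\beta_g(j) = j$ for $g \in G_j$ (so that $\alpha_{\tg g}(j)$ splits cleanly), and the independence of $\talpha(j,j')$ from the choice of $\tg$ in the final step.
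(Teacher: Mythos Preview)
Your proof is correct and follows essentially the same route as the paper: expand via~\eqref{e:actiononbasis}, relabel to turn the coefficients into $\alpha_h^{-1}(j)$ using Lemma~\ref{l:alpha}(a), decompose the sum over $G$ into cosets of $G_j$ indexed by orbit elements, split the cocycle via~\eqref{e:alphaproduct}, and then invoke~\eqref{e:alphainverse} and Definition~\ref{def:tgtalpha}. The only cosmetic difference is that the paper writes the orbit-stabilizer step as a double sum $\sum_{\hat g\in G_j}\sum_{j'\in G\acts j}$ rather than describing the cosets $\tg\,G_j$ explicitly as you do.
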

\begin{proof}
It follows from~\eqref{e:actiononbasis} and Lemma~\ref{l:alpha}(a) that
\begin{equation}\label{e:identity1}
\sum_{g\in G} \gamma_g \e_j = 
\sum_{g \in G} \alpha_g(g^{-1} \acts \j) \, \e_{g^{-1} \acts \j}
=
   \sum_{g \in G} \alpha_{g^{-1}}(g \acts \j) \, \e_{g \acts \j} 
=
   \sum_{g \in G} \alpha^{-1}_{g}(\j) \, \e_{g \acts \j} .
\end{equation}
Based on the orbit-stabilizer theorem, we use the notation $\tg(\j,\j')$ from Definition~\ref{def:tgtalpha} to write the right-hand side of~\eqref{e:identity1} as
\begin{alignat}{1}
  \sum_{g \in G} \alpha^{-1}_{g}(\j) \, \e_{g \acts \j} 
  &=   \sum_{\hat{g} \in G_\j} \sum_{\j' \in G \acts \j}
   \alpha^{-1}_{\tg(\j,\j') \hat{g}}(\j) \, \e_{\tg(\j,\j') \hat{g} \acts \j} 
   \nonumber \\
   &=   \sum_{\hat{g} \in G_\j} \sum_{\j' \in G \acts \j}
    \alpha^{-1}_{\tg(\j,\j')}(\j) \alpha^{-1}_{\hat{g}}(j)\, \e_{\tg(\j,\j') \acts \j}  \nonumber \\
   &=   \sum_{\hat{g} \in G_\j}  \alpha^{-1}_{\hat{g}}(j) 
   \sum_{\j' \in G \acts \j}
    \alpha^{-1}_{\tg(\j,\j')}(\j) \, \e_{\j'} ,   \label{e:identity2}
\end{alignat}
where we have used~\eqref{e:alphaproduct} in the second equality.
By combining~\eqref{e:identity1} and~\eqref{e:identity2} with~\eqref{e:alphainverse} and~\eqref{e:deftalpha} the assertion follows.
\end{proof}

We define the projection $\Pi : \X \to \Xred$ by
\[
  \Pi \c = \sum_{\j \in \Jred} \c_\j \e_\j.
\]
Clearly $\Pi^2= \Pi$.
Next, the group average $\Av : \X \to \Xsym$
is
\[
  \Av \omega = \frac{1}{|G|} \sum_{g \in G} \gamma_g \omega.
\]
We note that $\Av^2=\Av$.
To relate elements of $\Xred$ to elements of $\Xsym$
it turns out that it is useful to introduce the rescaling $\RR : \X \to \X$
\[
  \RR \c = \sum_{\j \in \J} |G\acts j| \c_j \e_\j.
\]
Finally, using $\talpha$ introduced in Definition~\ref{def:tgtalpha},
we define the linear map $\Sigma : \Xred \to \X$ by
\begin{equation}\label{e:sigma}
  \Sigma \b  \bydef \sum_{\j \in \Jred} \b_\j  \sum_{\j' \in G \acts \j} \talpha(\j,\j') \e_{\j'}.
\end{equation}

\begin{lemma}\label{l:bijective}\mbox{ }\\
\hspace*{5mm}\textup{(a)} 
For $\b \in \Xred$ we have $\Sigma \b = \Av \RR \b$, hence $\Sigma \b  \in \Xsym$.\\[1mm]
\hspace*{5mm}\textup{(b)}
$\Pi \Sigma$ is the identity on $\Xred$.	 \\[1mm]
\hspace*{5mm}\textup{(c)}
$\Sigma \Pi$ is the identity on $\Xsym$. \\[1mm]
\hspace*{5mm}\textup{(d)}
$\Sigma$ is bijective as a map from $\Xred$ to $\Xsym$ with inverse $\Pi : \Xsym \to \Xred$.	
\end{lemma}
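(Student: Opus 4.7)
The plan is to establish the four parts in the stated order, leveraging Lemma~\ref{l:averageunit} for part~(a) and then using the fundamental-domain property of $\Jdom$ together with the $G$-invariance of elements of $\Xsym$ to handle parts (b), (c), (d) almost immediately.

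For part~(a), I would first expand $\Av \RR \b$ directly: since $\RR \b = \sum_{\j \in \Jred} |G \acts \j|\, \b_\j\, \e_\j$, linearity gives
\[
  \Av \RR \b = \frac{1}{|G|}\sum_{\j \in \Jred} |G \acts \j| \, \b_\j \sum_{g \in G} \gamma_g \e_\j .
\]
Because $\Jred \subset \Jsym$, Lemma~\ref{l:averageunit} applies to every $\e_\j$ appearing, producing an inner sum equal to $|G_\j| \sum_{\j' \in G \acts \j} \talpha(\j,\j')\, \e_{\j'}$. The orbit-stabilizer identity $|G \acts \j| \cdot |G_\j| = |G|$ then cancels the prefactor, yielding exactly the defining expression~\eqref{e:sigma} of $\Sigma \b$. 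The inclusion $\Sigma \b \in \Xsym$ is then automatic from $\Sigma \b = \Av \RR \b$ together with the observation that $\Av \psi \in \Xsym$ for every $\psi \in \X$, which itself follows from $\gamma_{g_1 g_2} = \gamma_{g_2}\gamma_{g_1}$ and a re-indexing of the average.

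For part~(b), I apply $\Pi$ to the definition~\eqref{e:sigma} of $\Sigma\b$ and use the fundamental-domain property: among all $\j' \in G \acts \j$, exactly one lies in $\Jred$, namely $\j$ itself. Consequently only the term $\j'=\j$ survives the projection, and since one may take $\tilde g = e$ in Definition~\ref{def:tgtalpha} we get $\talpha(\j,\j) = \alpha_e^{-1}(\j)=1$, so $\Pi \Sigma \b = \sum_{\j \in \Jred} \b_\j \e_\j = \b$. Part~(c) is the step I expect to require the most care: given $\omega \in \Xsym$, I need to show that the values $\{\omega_\j\}_{\j \in \Jred}$ reconstruct the full collection $\{\omega_\j\}_{\j \in \Jsym}$ via $\Sigma$. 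For $\j \in \Jred$ and $\j' = \tilde g \acts \j \in G \acts \j$, I evaluate $(\gamma_{\tilde g}\omega)_\j = \alpha_{\tilde g}(\j)\, \omega_{\j'}$ and invoke $\gamma_{\tilde g}\omega = \omega$ to obtain $\omega_{\j'} = \alpha_{\tilde g}^{-1}(\j)\,\omega_\j = \talpha(\j,\j')\,\omega_\j$ (using~\eqref{e:talphaidentity}; this is where $\j \in \Jsym$ is essential, via Lemma~\ref{l:talpha}, to ensure $\talpha(\j,\j')$ is well defined). Combining this with Lemma~\ref{l:zeroindices}, which kills the coefficients on $\Jtriv$, and the partition of $\Jsym$ into orbits indexed by $\Jred$, I conclude $\Sigma \Pi \omega = \sum_{\j \in \Jred}\sum_{\j' \in G \acts \j} \omega_{\j'}\e_{\j'} = \sum_{\j' \in \Jsym}\omega_{\j'}\e_{\j'} = \omega$.

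Part~(d) is then formal: (b) shows $\Sigma$ is injective on $\Xred$ and (c) shows it surjects onto $\Xsym$ with $\Pi|_{\Xsym}$ as two-sided inverse. The only nontrivial check hidden in this packaging is that $\Sigma$ actually maps into $\Xsym$, which is exactly the content of part~(a). The main obstacle, as indicated, is part~(c): everything hinges on the compatibility identity $\omega_{\j'} = \talpha(\j,\j')\,\omega_\j$, so that all the phase factors introduced by $\Sigma$ match the phases imposed by $G$-invariance of $\omega$; once that identity is in hand, the remaining combinatorics is a straightforward reorganization of an orbit decomposition.
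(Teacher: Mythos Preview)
Your proof is correct and follows essentially the same route as the paper for parts (a), (b), and (d). For part (c) there is a minor presentational difference: the paper sets $\tc = \omega - \Sigma\Pi\omega$, uses part (b) to get $\Pi\tc=0$, and then propagates $\tc_\j=0$ to $\tc_{\j'}=0$ along the orbit via $G$-invariance of $\tc$; you instead derive the phase identity $\omega_{\j'}=\talpha(\j,\j')\,\omega_\j$ directly and substitute into the definition of $\Sigma\Pi\omega$. Both arguments rest on exactly the same key observation (that $G$-invariance determines every orbit coefficient from its $\Jred$ representative), so this is a packaging difference rather than a genuinely different approach.
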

\begin{proof}
We start with part (a). Let $\b \in \Xred$. Then it follows from the definitions of $\Av$ and $\RR$,
as well as Lemma~\ref{l:averageunit} that
\begin{equation*}
	\Av \RR \b  =
	\sum_{\j \in \Jred} \b_\j \frac{1}{|G_\j|} \sum_{g \in G} \gamma_g \e_\j 
    =  \sum_{\j \in \Jred} \b_\j 
	    \sum_{\j' \in G \acts \j} \talpha(\j,\j') \, \e_{\j'}	
	=		\Sigma \b .
\end{equation*}

Part (b) is an immediate consequence of~\eqref{e:sigma} and $\talpha(\j,\j)=1$.
	
To prove part (c), let $\c \in \Xsym$ be arbitrary and set $\tc = \c -\Sigma \Pi \c$. Then from part (b) we get
\[ 
  \Pi \tc = (\text{Id}_{\Xred} - \Pi \Sigma) \Pi \c =0,
\]
hence $\tc_{\j} =0$ for all $\j \in \Jred$. 
Consider any fixed $\j' \in \Jsym$, then there exists a $\j \in \Jred \cap G \acts \j'$.
Let $\tg \in G$ be such that $\tg \acts \j =\j'$.
Since $\tc \in \Xsym$ we have
\[
  \alpha_{\tg}(\j) \tc_{\j'} = (\gamma_{\tg} \tc)_{\j} = \tc_{\j} =0.
\] 
Since $\alpha_{\tg}(\j) \in S^1$ this implies that $\tc_{\j'}=0$, and since $\j' \in \Jsym$ was arbitrary, we conclude that $\tc_{\j'}=0$ for all $\j' \in \Jsym$. 
It then follows from  Lemma~\ref{l:zeroindices} that
\[
  \tc = \sum_{\j\in\J} \tc_\j \e_\j = \sum_{\j \in \Jsym} \tc_\j \e_\j =0.
\]
Hence, since $\c \in \Xsym$ was arbitrary, $\Sigma \Pi$ is the identity on $\Xsym$.

Part (d) follows immediately from (b) and (c).
\end{proof}

We will need to determine $\talpha$, particularly for computing the derivative of $\F$, see~\eqref{e:Credjj}. The next remark explains how this is accomplished in the code.
\begin{remark}\label{r:coding}

In the code we compute on a finite set of indices
$
  \Jdag = \{ j=(n,m) : n \in \E^\dagger \}.
$
In particular we determine
\[
  \Scode = \sum_{g \in G} \gamma_g \sum_{j \in \Jdom \cap \Jdag} \e_j .
\]
Since each orbit contains precisely one element of $\Jdom$,
it follows from Lemma~\ref{l:averageunit} that
\[
  \Scode_{j'} = \begin{cases}
  0 & \text{for } j' \in \Jdag \setminus \Jsym \\
  |G_{j'}| \, \talpha(j,j') & \text{for } j' \in \Jdag \cap \Jsym, \text{ where } j \in \Jred \cap G \acts j' .\\
  \end{cases}
\]
Since $\talpha(j,j') \in S^1 \subset \C$, it is straightforward to infer the index set $\Jdag \cap \Jsym$ from $\Scode$. Furthermore, for $j' \in \Jdag \cap \Jsym$ the value of $\Scode_{j'} \in \C$ gives us direct access to both the order of the stabilizer~$|G_{j'}|$ and the value $\talpha(j,j')$ for the unique $j \in \Jred \cap G \acts j'$. 
\end{remark}

We finish this subsection by extending the complex conjugate symmetry $\gamma_*$ to the reduced space. We define a corresponding action $\tgamma_*$ on $\Xred$ by 
\[
   \tgamma_* \b \bydef \Pi  \gamma_* \Sigma \b 
   \qquad\text{for all } \b \in \Xred.
\]
Clearly $\gamma_*$ leaves $\Xsym$ invariant (see~\eqref{e:gammaggammastar}).
It then follows from Lemma~\ref{l:bijective}(d) that
\begin{equation}\label{e:sigmagammastar}
  \Sigma \tgamma_* = \gamma_* \Sigma    .
\end{equation}
As one may expect, $\tgamma_*$ is an involution. 
In particular, this is useful in order to symmetrize $\b$ by 
$\frac{1}{2}(\b + \tgamma_* \b)$, which is invariant under $\tgamma_*$.
\begin{lemma}
We have $\tgamma_*^2 = \text{\textup{Id}}_{\Xred}$.
\end{lemma}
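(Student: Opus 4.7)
The plan is to use the identity $\Sigma \tgamma_* = \gamma_* \Sigma$ from~\eqref{e:sigmagammastar} together with the facts that $\gamma_*$ is itself an involution on $\X$ (immediate from Definition~\ref{def:complex_conj}, since complex conjugation of complex conjugation is the identity, and $-(-n)=n$) and that $\Pi\Sigma=\text{Id}_{\Xred}$ by Lemma~\ref{l:bijective}(b). The key observation is that~\eqref{e:sigmagammastar} lets us move $\tgamma_*$ past $\Sigma$ at the cost of replacing it by $\gamma_*$, which can then be squared trivially.

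Concretely, for $\b\in\Xred$ I would compute
\[
\tgamma_*^2 \b = \tgamma_*(\tgamma_* \b) = \Pi \gamma_* \Sigma (\tgamma_* \b)
\stackrel{\eqref{e:sigmagammastar}}{=} \Pi \gamma_* (\gamma_* \Sigma \b)
= \Pi \gamma_*^2 \Sigma \b = \Pi \Sigma \b = \b,
\]
which gives the result.

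As a sanity check on why~\eqref{e:sigmagammastar} is available here, note that it was already established in the excerpt as a consequence of $\gamma_*$ leaving $\Xsym$ invariant (itself coming from~\eqref{e:gammaggammastar}) together with Lemma~\ref{l:bijective}(d). So no ingredient beyond what is already in the paper is needed, and there is no real obstacle — the statement follows purely formally from the compatibility of $\gamma_*$ with the reduction $\Sigma$/$\Pi$ between $\Xred$ and $\Xsym$. If one preferred to avoid invoking~\eqref{e:sigmagammastar} directly, one could equivalently write $\tgamma_*^2 \b = \Pi\gamma_*\Sigma\Pi\gamma_*\Sigma\b$, observe that $\gamma_*\Sigma\b\in\Xsym$ (so Lemma~\ref{l:bijective}(c) collapses the middle $\Sigma\Pi$ to the identity), and conclude with $\gamma_*^2=\text{Id}$ and Lemma~\ref{l:bijective}(b).
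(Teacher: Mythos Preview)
Your proof is correct and essentially the same as the paper's. The paper writes $\tgamma_*^2 \b = \Pi\gamma_*\Sigma\Pi\gamma_*\Sigma\b$, collapses the middle $\Sigma\Pi$ via Lemma~\ref{l:bijective}(c) (using that $\gamma_*\Sigma\b\in\Xsym$), then applies $\gamma_*^2=\text{Id}$ and Lemma~\ref{l:bijective}(b) --- which is exactly the alternative you sketch at the end; your main line simply invokes~\eqref{e:sigmagammastar} to shortcut the $\Sigma\Pi$ collapse, and~\eqref{e:sigmagammastar} was itself derived from those same ingredients.
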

\begin{proof}
Let $\b \in \Xred$. Since $\gamma_*$ leaves $\Xsym$ invariant it follows that
\begin{alignat*}{1}
  \tgamma^2_* \b = \Pi  \gamma_* \Sigma \Pi  \gamma_* \Sigma \b 
  = \Pi  \gamma^2_*  \Sigma \b  = \Pi  \Sigma \b = \b,
\end{alignat*}
where in the third equality we have used~\ref{l:bijective}(c),
and in the final equality we have used~\ref{l:bijective}(b).
\end{proof}

\subsection{Functional analytic setup in symmetry reduced variables}
\label{s:Fred}

We now introduce the framework (with symmetry-reduced variables), where we define the fixed point operator that is actually used in practice to validate the numerical solution.

First, we introduce the space
\[
  \XXred \bydef \C \times \Xred.
\]
Elements in $\XXred$ are denoted by
\[
  \varphi = (\Omega,\phi) = \left(\varphi_\phase,(\varphi_j)_{j\in\Jred} \right),
\]
and the operators $\Pi$ and $\Sigma$ naturally extend to $\XX$ and $\XXred$ respectively via
\[
  \Pi(\Omega,\omega)=(\Omega,\Pi\omega) \qquad\text{and}\qquad
  \Sigma(\Omega,\phi)=(\Omega,\Sigma\phi).
\]
Similarly, we extend the definition of $\tgamma_*$ to $\XXred$ by 
\begin{equation*}
\tgamma_* (\Omega,\b) = (\Omega^*,\tgamma_*\b).
\end{equation*}
For later use, we also define the space $\XXsym \bydef \C \times \Xsym$.

Introducing the space $\XXred_{-2,-1}$, which is obtained from $\XX_{-2,-1}$ the same way $\XXred$ is obtained from $\XX$, we define the map $\FFred:\XXred\to \XXred_{-2,-1}$ by
\begin{equation*}
\FFred=\begin{pmatrix}
\Fred_{\phase} \\
\left(\Fred_j\right)_{j\in\Jred}
\end{pmatrix},
\end{equation*}
where
\begin{equation}\label{e:defFF}
   \Fred(\Omega, \b) \bydef  \Pi  F(\Omega,\Sigma \b ),
\end{equation}
and the phase condition in the symmetrized setting is given by
\begin{equation}
\label{e:phasesymmetry}
\Fred_{\phase}(\phi) \bydef i \sum_{j} |G \acts j| n_4 \phi_j (\hat{\phi}_j)^*,
\end{equation}
where $j=(\tilde{n},n_4,m)$, and $\hat\phi$ is some fixed element of $\Xred$. Notice that, if we choose $\hat{\omega} = \Sigma \hat{\phi}$ in~\eqref{eq:phase_condition}, then $\Fred_{\phase}(\phi) = F_{\phase} (\Sigma \phi)$ and therefore
\begin{equation}
\label{eq:FredVSF}
\FFred(\varphi) = \Pi \F (\Sigma \varphi) .
\end{equation}

Recalling \eqref{eq:weights_over_J}, we also introduce the weights
\begin{equation}\label{e:defxis}
   \symweight_\j \bydef  \weight_\j \, |G \acts \j| .
\end{equation}
and (see Remark~\ref{r:unitweight})
\[
  \symweight_\phase = 1,
\]
together with the norm
\[
\rednorm{\varphi} \bydef \symweight_\phase |\Omega| + \left\Vert \phi \right\Vert_{\Xred} = \sum_{j\in \JJred} \symweight_j |\varphi_j|,
\]
with index set
\[ 
  \JJred \bydef \phase \cup \Jred.
\]
We recall that $|G \acts \j | = |G| / |G_\j|$ by Remark~\ref{r:orbitstabilizer}.
This observation is combined with Remark~\ref{r:Zcode} to determine the weights  $\symweight_\j$ in the code.
The next lemma shows that the norm $\rednorm{\cdot}$ is compatible with the symmetrization, as well as complex conjugation.
\begin{lemma}\label{l:symmetrycompatible}\mbox{ }\\
\hspace*{5mm}\textup{(a)} 
For all $\varphi \in \XXred$ we have 
$ \rednorm{\varphi} = \norm{\Sigma\varphi} $.\\[1mm]
\hspace*{5mm}\textup{(b)} 
For all $\varphi \in \XXred$ we have 
$ \rednorm{\tgamma_* \varphi} = \rednorm{\varphi} $.
\end{lemma}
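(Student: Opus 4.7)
The plan is to prove part (a) directly from the definitions of the norms and the map $\Sigma$, then reduce part (b) to part (a) using the intertwining identity~\eqref{e:sigmagammastar} together with the fact that $\gamma_*$ preserves the norm $\norm{\cdot}$.

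For part (a), first note that $\Sigma \varphi = (\Omega, \Sigma \phi)$ by definition, so $|\Omega|$ contributes identically to both sides. By Lemma~\ref{l:bijective}(a) and Lemma~\ref{l:zeroindices}, $\Sigma \phi \in \Xsym$ has support in $\Jsym$, which partitions into orbits, each of which contains exactly one element of $\Jred$. Using the explicit expression~\eqref{e:sigma} for $\Sigma$, for each $j' \in \Jsym$ we have $(\Sigma \phi)_{j'} = \talpha(j,j') \phi_j$ where $j$ is the unique element of $\Jred \cap G \acts j'$. Since $\talpha(j,j') \in S^1$, its modulus equals one. The crucial observation is that the weight $\weight_{j'} = \eta^{|n'|_1}$ is $G$-invariant on orbits: if $j' = \beta_g(j)$ with $j=(\tn,n_4,m)$, then by~\eqref{e:explicitbeta} we have $n' = (C_g \tn, n_4)$ with $C_g$ a signed permutation matrix, hence $|n'|_1 = |C_g \tn|_1 + |n_4| = |\tn|_1 + |n_4| = |n|_1$. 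Reorganizing the sum over $\Jsym$ by orbits thus gives
\begin{align*}
\norm{\Sigma \varphi} &= |\Omega| + \sum_{j \in \Jred} \sum_{j' \in G \acts j} \weight_{j'} |\talpha(j,j')| |\phi_j| \\
&= |\Omega| + \sum_{j \in \Jred} |G \acts j| \, \weight_j |\phi_j| = \rednorm{\varphi}.
\end{align*}

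For part (b), I would first verify that $\gamma_*$ preserves the norm on $\XX$. Indeed, for $W=(\Omega,\omega)$ the definition of $\gamma_*$ gives $(\gamma_* W)_{\phase} = \Omega^*$ and $(\gamma_* W)^{(m)}_n = (\omega^{(m)}_{-n})^*$, so using $|-n|_1 = |n|_1$ we get $\norm{\gamma_* W} = \norm{W}$ after a trivial reindexing $n \mapsto -n$ in the tail sum. Now combine~\eqref{e:sigmagammastar} with part (a):
\begin{equation*}
\rednorm{\tgamma_* \varphi} = \norm{\Sigma \tgamma_* \varphi} = \norm{\gamma_* \Sigma \varphi} = \norm{\Sigma \varphi} = \rednorm{\varphi}.
\end{equation*}

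Neither part presents a real obstacle. The only point that requires any care is the $G$-invariance of the weight $\weight_j$ on orbits, which hinges on the specific form of the action~\eqref{e:explicitbeta} (signed permutations on $\tn$ and trivial on $n_4$); this is what makes the chosen norm compatible with the symmetry reduction. Once that is observed, part (a) is essentially bookkeeping over orbits, and part (b) follows by purely formal manipulation from already-established identities.
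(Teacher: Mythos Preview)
Your proof is correct and follows essentially the same approach as the paper: for part (a) you use the explicit form of $\Sigma$ together with the $G$-invariance of the weights $\weight_j$ (via the signed-permutation structure of $C_g$) and $|\talpha|=1$ to collapse the orbit sums, and for part (b) you use the isometry of $\gamma_*$ on $\XX$ combined with~\eqref{e:sigmagammastar} and part (a), exactly as in the paper's argument.
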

\begin{proof}
We first observe that $\weight_{\j}$ is invariant under $G$. Namely, when we denote $(n',m')= g \acts (n,m)$, then it follows from~\eqref{e:explicitbeta}
 that $|n'|_1 = |n|_1$ for any $g \in G$, since $C_g$ is a signed permutation matrix. 
 
From the definition~\eqref{e:sigma}, we then obtain, 
\begin{alignat*}{1}
\norm{\Sigma\varphi}  &= \vert \Omega\vert +
    \sum_{\j \in \Jred} \sum_{\j' \in G \acts \j} \weight_{\j'} |\b_\j \, \talpha(\j,\j')| \\
	&= \vert \Omega\vert + \sum_{\j \in \Jred} |\b_\j| \, \weight_{\j} \sum_{\j' \in G \acts \j} | \talpha(\j,\j')| \\
	 &= \vert \Omega\vert + \sum_{\j \in \Jred} |\b_\j| \, \weight_{\j}  \, |G \acts \j| \\
	 &= \rednorm{\varphi} \, ,
\end{alignat*}
since $\talpha \in S^1$. This proves part (a).

To prove part (b), we first note that, for all $W\in\XX$, $\norm{\gamma_* W} = \norm{W}$
by Definition~\ref{def:complex_conj} and the observation $\weight_{\j}$ is invariant under $\gamma_*$, in the sense that $|{-n}|_1=|n|_1$. It then follows from part (a) and~\eqref{e:sigmagammastar} that 
\[
  \rednorm{\tgamma_* \varphi} = \norm{\Sigma \tgamma_* \varphi} = \norm{\gamma_* \Sigma \varphi} = \norm{\Sigma \varphi} = \rednorm{\varphi}. \qedhere
\]
\end{proof}

To solve the zero finding problem $\FFred=0$ on $\XXred$,
we analyse a fixed point operator as in Section~\ref{s:radpol}.
The role of $\XX$ is taken over by $\XXred$, 
and the norm $\| \cdot\|_\XX$ is replaced by the weighted norm $\| \cdot\|_{\XXred}$,
which is symmetry compatible in the sense of Lemma~\ref{l:symmetrycompatible}.

Notice that $\FFred$ inherits the equivariance property of $\F$ with respect to the complex conjugacy $\gamma_*$.
\begin{lemma}
\label{lem:Fredequivariance}
Assume that $\hat{\omega}$ used in~\eqref{eq:phase_condition} is such that, $\hat{\omega}= \Sigma \hat{\phi}$ for some $\hat\phi\in \Xred$ such that $\tgamma_*\hat\phi=\hat\phi$. Then, for all $\varphi\in \XXred$, 
\begin{equation*}
\FFred ( \tgamma_* \varphi) = \tgamma_* \FFred (\varphi).
\end{equation*}
\end{lemma}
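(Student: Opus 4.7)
The plan is to reduce the claim to the $\gamma_*$-equivariance of the unreduced map $\F$ from Lemma~\ref{lem:FequivariantS}, using as glue the identity $\FFred=\Pi\F\Sigma$ from~\eqref{eq:FredVSF} and the commutation $\Sigma\tgamma_*=\gamma_*\Sigma$ from~\eqref{e:sigmagammastar}. The latter extends trivially to $\XXred$ and $\XX$ since both $\tgamma_*$ and $\gamma_*$ act by complex conjugation on the scalar $\Omega$-slot, while $\Sigma$ and $\Pi$ act as the identity there.

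First I would verify the hypothesis of Lemma~\ref{lem:FequivariantS}: starting from $\hat\omega=\Sigma\hat\phi$ and $\tgamma_*\hat\phi=\hat\phi$, one obtains $\gamma_*\hat\omega=\gamma_*\Sigma\hat\phi=\Sigma\tgamma_*\hat\phi=\Sigma\hat\phi=\hat\omega$. Next, I would observe that $\F$ preserves the symmetric subspace, i.e.\ $\F(\XXsym)\subset\XXsym_{-2,-1}$. Indeed, for $W=(\Omega,\omega)\in\XXsym$ we have $\gamma_g\omega=\omega$ for every $g\in G$, so the $G$-equivariance of $F$ (Lemma~\ref{l:Fequivariant}) gives $F(W)=\gamma_g F(W)$, placing the vorticity component of $F(W)$ in $\Xsym_{-2,-1}$; the $\phase$-component lies in $\C$ and is automatically admissible. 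Since $\Sigma\varphi\in\XXsym$ by Lemma~\ref{l:bijective}(a), this yields $\F(\Sigma\varphi)\in\XXsym_{-2,-1}$, and combining with Lemma~\ref{l:bijective}(c) I get $\Sigma\Pi\F(\Sigma\varphi)=\F(\Sigma\varphi)$.

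With these ingredients in place, the claim follows from the chain
\begin{equation*}
\FFred(\tgamma_*\varphi)=\Pi\F(\Sigma\tgamma_*\varphi)=\Pi\F(\gamma_*\Sigma\varphi)=\Pi\gamma_*\F(\Sigma\varphi)=\Pi\gamma_*\Sigma\Pi\F(\Sigma\varphi)=\tgamma_*\Pi\F(\Sigma\varphi)=\tgamma_*\FFred(\varphi),
\end{equation*}
invoking successively~\eqref{eq:FredVSF}, $\Sigma\tgamma_*=\gamma_*\Sigma$, Lemma~\ref{lem:FequivariantS}, the identity $\Sigma\Pi\F(\Sigma\varphi)=\F(\Sigma\varphi)$ just established, the definition $\tgamma_*=\Pi\gamma_*\Sigma$ applied to $\Pi\F(\Sigma\varphi)$, and finally~\eqref{eq:FredVSF} once more. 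There is no real obstacle; the argument is a short compatibility check. The only mildly delicate point is the bookkeeping around the scalar $\phase$-slot — ensuring that the extensions of $\tgamma_*$, $\Sigma$, and $\Pi$ to $\XX$ and $\XXred$ act consistently on it — which is resolved simply by noting that all three are the identity on that slot apart from the conjugation inherent in $\tgamma_*$ and $\gamma_*$.
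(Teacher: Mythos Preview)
Your proof is correct and follows essentially the same route as the paper: both reduce to the chain $\FFred(\tgamma_*\varphi)=\Pi\F(\Sigma\tgamma_*\varphi)=\Pi\F(\gamma_*\Sigma\varphi)=\Pi\gamma_*\F(\Sigma\varphi)=\tgamma_*\FFred(\varphi)$ via \eqref{eq:FredVSF}, \eqref{e:sigmagammastar}, and Lemma~\ref{lem:FequivariantS}. Your version is in fact more careful, explicitly verifying $\gamma_*\hat\omega=\hat\omega$ and justifying the last equality through $\F(\Sigma\varphi)\in\XXsym$ and $\Sigma\Pi|_{\XXsym}=\text{Id}$, whereas the paper leaves these implicit.
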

\begin{proof}
Using successively~\eqref{eq:FredVSF},~\eqref{e:sigmagammastar} and Lemma~\ref{lem:FequivariantS}, we obtain
\begin{equation*}
\FFred ( \tgamma_* \varphi) = \Pi \F(\Sigma \tgamma_* \varphi) = \Pi \F( \gamma_* \Sigma \varphi) = \Pi \gamma_*\F( \Sigma \varphi) = \tgamma_* \FFred (\varphi). \qedhere
\end{equation*}
\end{proof}

Having chosen $N^\dagger$ and $\tilde N$,
we define the index sets
\begin{alignat*}{1}
  \E^\dagger_{\text{red}} (N^\dagger) & \bydef \{ j=(n,m) \in \Jred : n \in \E^\dagger (N^\dagger) \} ,\\
  \tildeset^{\text{sol}}_{\text{red}}(\tilde N) & \bydef \{ j=(n,m) \in \Jred : n \in \tildeset^{\text{sol}} (\tilde N) \} .
\end{alignat*}
The size of the Galerkin projection is the number of elements 
in $ \E^\dagger_{\text{red}} (N^\dagger) $,
which is substantially smaller than the number of elements in $\E^\dagger (N^\dagger)$, since we restrict to symmetry reduced variables. Indeed, the number of independent variables is reduced by roughly a factor $|G|$. 

The construction of linear operators $\Ared$ and $\Areddag$
is completely analogous to Section~\ref{s:AAdag}. To incorporate the incompressibility, we introduce
\begin{equation*}
\XXred_\div \bydef \Pi \left\{W\in \XXsym : \sum_{m=1}^{3} D_m\omega^{(m)} = 0 \right\}.
\end{equation*}
\begin{theorem}
\label{th:radii_pol_sym}
Let $\eta>1$.
Assume there exist non-negative constants $Y_0^\ered$, $Z_0^\ered$, $Z_1^\ered$ and~$Z_2^\ered$ such that
\begin{align}
\rednorm{\Ared\FFred(\bb)} &\leq Y_0^\ered \label{def:Y_0_sym}\\ 
\Vert I-\Ared\Areddag \Vert_{B(\XXred,\XXred)} &\leq Z_0^\ered \label{def:Z_0_sym}\\
\Vert \Ared\left(D\FFred(\bb)-\Areddag\right) \Vert_{B(\XXred,\XXred)} &\leq Z_1^\ered \label{def:Z_1_sym}\\
\Vert \Ared(D\FFred(\varphi)-D\FFred(\bb)) \Vert_{B(\XXred,\XXred)} &\leq Z_2^\ered \rednorm{\varphi-\bb}, \quad \text{for all }\varphi\in\XXred. \label{def:Z_2_sym}
\end{align}
Assume also that
\begin{itemize}
\item the forcing term $f$ is time independent, $G$-invariant, and has average zero,
\item $\bb = (\bO,\bar\phi)$ is in $\in\XXred_\div$,
\item $\tgamma_* \bar\varphi=\bar\varphi$ and $\hat\phi$ (used to define the phase condition~\eqref{e:phasesymmetry}) is such that $\tgamma_* \hat\phi=\hat\phi$.
\end{itemize}
If 
\begin{equation}
\label{hyp:cond_pol_sym}
Z_0^\ered+Z_1^\ered<1 \qquad \text{and}\qquad 2Y_0^\ered Z_2^\ered<\left(1-(Z_0^\ered+Z_1^\ered)\right)^2,
\end{equation}
then, for all $r\in[r_{\min},r_{\max})$ there exists a unique $\tilde\varphi=(\tilde\Omega,\tilde\phi)\in \B_{\XXred}(\bb,r)$ such that $\FFred(\tilde\varphi)=0$, where $\B_{\XXred}(\bb,r)$ is the closed ball in ${\XXred}$, centered at $\bb$ and of radius $r$, and
\begin{alignat}{1}
r_{\min} &\bydef \frac{1-(Z_0^\ered+Z_1^\ered)-\sqrt{\left(1-(Z_0^\ered+Z_1^\ered)\right)^2-2Y_0^\ered Z_2^\ered}}{Z_2^\ered}, \label{e:rmin}\\
r_{\max}&\bydef\frac{1-(Z_0^\ered+Z_1^\ered)}{Z_2^\ered}  \label{e:rmax}.
\end{alignat}
Besides, this unique $\tilde\varphi$ also lies in $\in\XXred_{\div}$. Finally, defining $u=M\Sigma\tilde\phi$, there exists a pressure function $p$ such that $(u,p)$ is a $\frac{2\pi}{\tilde\Omega}$-periodic, real valued, analytic and $G$-invariant solution of Navier-Stokes equations~\eqref{eq:NS}.
\end{theorem}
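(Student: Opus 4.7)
The plan is to mirror the proof of Theorem~\ref{th:radii_pol} in the symmetry-reduced space $\XXred$, and then lift the resulting zero of $\FFred$ to a genuine zero of $\F$ via the bijection $\Sigma:\Xred\to\Xsym$ from Lemma~\ref{l:bijective}. First, I would run the Neumann series and Banach fixed-point argument of Theorem~\ref{th:radii_pol} verbatim, replacing $\XX$ by $\XXred$ and using that $\|\cdot\|_{\XXred}$ is a genuine weighted $\ell^1$ norm. Estimates~\eqref{def:Z_0_sym} and~\eqref{def:Z_1_sym} together with~\eqref{hyp:cond_pol_sym} make $\Ared\Areddag$ and $\Ared D\FFred(\bar\varphi)$ invertible, so $D\FFred(\bar\varphi)$ is boundedly invertible from $\XXred_{-2,-1}$ to $\XXred$ with norm of the inverse controlled by $(1-(Z_0^\ered+Z_1^\ered))^{-1}$. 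The operator $T^{\ered}(\varphi)\bydef\varphi-D\FFred(\bar\varphi)^{-1}\FFred(\varphi)$ is then shown to contract $\B_{\XXred}(\bar\varphi,r)$ into itself for every $r\in[r_{\min},r_{\max})$ via the same radii polynomial $P(r)=\tfrac{1}{2(1-Z_0^\ered-Z_1^\ered)}Z_2^\ered r^2-r+\tfrac{1}{1-Z_0^\ered-Z_1^\ered}Y_0^\ered$, yielding a unique fixed point $\tilde\varphi\in\B_{\XXred}(\bar\varphi,r)$ that, by injectivity of $D\FFred(\bar\varphi)$, is the unique zero of $\FFred$ in that ball.

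The main new step is to lift $\tilde\varphi$ to a zero of $\F$ on all of $\XX$. Setting $\tilde W\bydef\Sigma\tilde\varphi\in\XXsym$ (Lemma~\ref{l:bijective}(a)), I would argue that $\F(\tilde W)\in\XXsym_{-2,-1}$: this uses the $G$-equivariance of the nonlinear and linear parts of $F$ (Lemma~\ref{l:Fequivariant}), the $G$-invariance of $f$, and the obvious $G$-invariance of the symmetrised phase condition~\eqref{e:phasesymmetry}. Combined with~\eqref{eq:FredVSF} and Lemma~\ref{l:bijective}(c), which gives $\Sigma\Pi=\text{Id}$ on $\XXsym_{-2,-1}$, one obtains
\[
   \F(\tilde W)=\Sigma\Pi\F(\Sigma\tilde\varphi)=\Sigma\,\FFred(\tilde\varphi)=0.
\]
This is the step I expect to require the most care, since it rests on verifying that all three pieces of $\F$ (the $\phase$-component, the linear part built into $\Adag$, and the nonlinearity $\Psi$) respect the full symmetry group.

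For the divergence-free property I would imitate the corresponding step of Theorem~\ref{th:radii_pol}: for $\varphi\in\XXred_\div$, $\Sigma\varphi\in\XXsym\cap\XX^\div$, Lemma~\ref{lem:X_div} then yields $\F(\Sigma\varphi)\in\XXsym_{-2,-1}\cap\XX_{-2,-1}^\div$, so $\FFred(\varphi)\in\XXred_{-2,-1,\div}$; since $D\FFred(\bar\varphi)$ inherits this property (and is invertible), $T^\ered$ leaves $\XXred_\div$ invariant, and the Banach argument restricted to $\B_{\XXred_\div}(\bar\varphi,r)$ produces a fixed point which, by uniqueness, must coincide with $\tilde\varphi$. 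Real-valuedness is handled by the $\tgamma_*$-equivariance of $\FFred$ (Lemma~\ref{lem:Fredequivariance}) together with $\rednorm{\tgamma_*\varphi}=\rednorm{\varphi}$ (Lemma~\ref{l:symmetrycompatible}(b)): $\tgamma_*\tilde\varphi$ is again a zero of $\FFred$ in $\B_{\XXred}(\bar\varphi,r)$ and hence equals $\tilde\varphi$, so that $\tilde\Omega\in\R$ and $\Sigma\tilde\phi$ corresponds to a real-valued function.

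To conclude, since $\eta>1$ the vorticity $\omega$ associated to $\tilde W$ is analytic (Remark~\ref{rem:analyticity}), divergence free, and satisfies $F(\tilde W)=0$; the hypotheses of Lemma~\ref{lem:eq_NS_vorticity} are therefore met, giving a pressure $p$ and a $\frac{2\pi}{\tilde\Omega}$-periodic solution $(u,p)$ with $u=M\Sigma\tilde\phi$ of the Navier–Stokes system~\eqref{eq:NS}. Finally, $G$-invariance of $u$ follows from $\tilde W\in\XXsym$ via Lemma~\ref{l:ctou} and relation~\eqref{eq:agM}, while $G$-invariance of the recovered pressure $p$ follows from Remark~\ref{rem:invariancep}.
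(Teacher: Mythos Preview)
Your proposal is correct and follows essentially the same approach as the paper's proof: run the contraction argument of Theorem~\ref{th:radii_pol} verbatim in $\XXred$, establish divergence-freeness and real-valuedness via invariance of $T^\ered$ under $\XXred_\div$ and $\tgamma_*$ respectively, lift the zero to $\XXsym$ via $\Sigma$, and conclude with Lemma~\ref{lem:eq_NS_vorticity} and Remark~\ref{rem:invariancep}. Your explicit justification of the lifting step, namely $\F(\Sigma\tilde\varphi)=\Sigma\Pi\F(\Sigma\tilde\varphi)=\Sigma\FFred(\tilde\varphi)=0$ via $G$-equivariance of $\F$ and Lemma~\ref{l:bijective}(c), spells out what the paper simply asserts in one line.
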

\begin{proof}
The structure of the proof is the same as the one of Theorem~\ref{th:radii_pol}. Therefore, we only outline the main steps. First, we consider the operator
\[
  \Tred \bydef I - D \FFred(\bb) \FFred ,
\]
and use~\eqref{hyp:cond_pol_sym} to infer that it is a contraction on $\B_{\XXred}(\bb,r)$ for all $r\in[r_{\min},r_{\max})$, yielding the existence of a unique zero $\tilde\varphi$ of $\FFred$ in $\B_{\XXred}(\bb,r)$. 
Next, we use that $\bb\in\XXred_\div$ and that $\Tred\left(\XXred_\div\right)\subset \XXred_\div$, which follows from arguments entirely analogous to the ones in the proof of Theorem~\ref{th:radii_pol}, to infer that $\tilde\varphi$ also belongs to $\XXred_\div$. Then, we use Lemmas~\ref{l:symmetrycompatible} and~\ref{lem:Fredequivariance}, once again as in the proof of Theorem~\ref{th:radii_pol}, to conclude that $\tgamma_*\tilde\varphi=\tilde\varphi$.

Defining $\tilde W=(\tilde \Omega,\tilde \omega)=\Sigma\tilde\varphi$, we then have $\F(\tilde W)=0$ (with $\hat\omega=\Sigma\hat\phi$ in~\eqref{eq:phase_condition}), $\tilde\omega\in\XX_\div$ and $\gamma_*\tilde W=\tilde W$. By Lemma~\ref{lem:eq_NS_vorticity}, there exists $p$ such that $(u,p)$ is a $2\pi/\tilde\Omega$-periodic, real valued, analytic solution of Navier-Stokes equations~\eqref{eq:NS}.

Finally, by construction $\tilde\omega\in\Xsym$, and by Lemma~\ref{l:ctou} $\tilde a_g\tilde\omega=\tilde\omega$ for all $g\in G$. From~\eqref{eq:agM} we infer that $u=M\tilde\omega$ satisfies $a_g u=u$, and that $p=\Gamma \Phi(u)$ satisfies $a_g p=p$ (see Remark~\ref{rem:invariancep}), i.e., $(u,p)$ is $G$-invariant.
\end{proof}

\begin{remark}\label{r:makesymmetric}
To make sure that $\bb \in \XXred_\div$ in practice we replace a candidate $\bar\phi$ by $\Pi \Av \Pi_{\div} \Sigma \bar\phi$,
where $\Pi_{\div}$ is a projection of $\X$ onto 
\[
  \X_\div \bydef
  \left\{ \omega \in X : \sum_{m=1}^{3} D_m\omega^{(m)} = 0 \right\},
\] 
which is $G$-invariant and hence invariant under the averaging operator $\Av$.

Similarly, to make sure that $\tgamma_* \bb=\bb$  we make sure 
$\bar{\Omega}$ is real-valued and we 
replace a candidate $\bar\phi$ by $\frac{1}{2}(\bar\phi + \Pi \gamma_* \Sigma \bar\phi)$, and analogously $\tgamma_* \hat\phi=\hat\phi$. Since $\XXred_\div$ is invariant under $\tgamma_*$, this new $(\bar{\Omega},\frac{1}{2}(\bar\phi + \Pi \gamma_* \Sigma \bar\phi))$ is still in $\XXred_\div$ if $\bar\varphi$ is.

The resulting $\bb$ and $\hat{\phi}$ may then be represented by intervals in the code, but that is no impediment whatsoever.
\end{remark}

In section~\ref{s:symmetrybounds} we explain how explicit expressions for the bounds $Y_0^\ered$, $Z_0^\ered$, $Z_1^\ered$ and $Z_2^\ered$ can be obtained from the bounds $Y_0$, $Z_0$, $Z_1$ and $Z_2$ of Section~\ref{s:estimates}.

\subsection{Bounds in the symmetric setting}
\label{s:symmetrybounds}

As in Section~\ref{s:estimates}, we 
assume throughout that the approximate solution $\bb=(\bar\Omega,\bar\phi)$ only has a finite number of non-zero modes, i.e., $\bar\phi_n=0$ for all $n\notin \set^\sol$ for some finite set $\set^\sol$.
Similarly, $\hat{\phi}_n=0$ for all $n\notin \set^\sol$. 

\subsubsection{bound \boldmath$Y_0^\ered$\unboldmath}
\label{s:symboundY}

There are essentially no changes compared to Section~\ref{s:Y0bound} in the computation of the bound on the residue, except that we need to take into account the symmetry respecting norm:
\begin{equation} \label{eq:Y0red}
   Y_0^\ered \bydef  \symweight_\phase \bigl|[\Ared \FFred(\bb)]_\phase \bigr| 
   + \sum_{\substack{ \j=(n,m) \in \Jred \\ n \in  \set^{\sol}+\set^{\sol} }} \symweight_j \, \bigl|[\Ared \FFred(\bb)]_\j\bigr| .
\end{equation}

\subsubsection{bound \boldmath$Z_0^\ered$\unboldmath}
\label{s:symboundZ0}

The bound $Z_0^\ered$ is completely analogous to the one in Section~\ref{s:Z0bound}.
We just need to use the operator norm
\begin{equation} \label{eq:Z0red}
Z_0^\ered \bydef  \|\Bred\|_{B(\XXred,\XXred)} = \sup_{j \in \JJred} \frac{1}{\symweight_j}
  \rednorm{B^{(.,j)}},
\end{equation}
applied to $\Bred=I-\Ared \Areddag$, which again has finitely many nonzero components only.

\subsubsection{bound \boldmath$Z_1^\ered$\unboldmath}
\label{s:symboundZ1}



We introduce 
\[
  \Cred \bydef D\FFred(\bb) - \Areddag.
\]
The finite part of the $Z_1^\ered$ estimate is analogous to section~\ref{s:Z1bound}:
\begin{equation} \label{eq:Z1red_finite}
  \left(Z_1^\ered\right)^{\textup{finite}} = \max_{j \in \phase \cup  \tildeset^{\text{sol}}_{\text{red}}(\tilde N) } 
  \frac{1}{\symweight_j} \rednorm{\Ared \Cred_{\cdot, j}} .
\end{equation}
For the tail part, we will take advantage of the estimates in section~\ref{s:Z1bound}.
We need to estimate
\[
  \sup_{j \in \Jred \setminus \set^{\text{sol}}_{\text{red}}(\tilde N) } 
  \frac{1}{\symweight_j} \rednorm{\Ared \Cred_{\cdot, j}} .
\]
We introduce, cf.~\eqref{e:defFF},
\begin{equation}\label{e:psired}
  \Psired(\b )  \bydef \Pi \Psi (\Sigma \b )  ,
\end{equation}
where $\Psi$ is defined in~\eqref{e:defPsi}.
Furthermore, we set $\lambda_j \bydef \lambda_n$ for $j=(n,m)$, see~\eqref{e:deflambda}. It follows from the definition of $\FFred$ and $\Areddag$ that
\begin{equation*}
	\Cred_{j', j}= \begin{cases}
	0 & \text{for } j,j' \in \phase \cup \E^\dagger_{\textup{red}}, \\
	(D\Psired(\bar\phi))_{j',j} & \text{for } 
	j \in \Jred \setminus \E^\dagger_{\textup{red}}
	\text{ or } 
	j' \in \Jred \setminus \E^\dagger_{\textup{red}}.
	\end{cases}
\end{equation*}
In particular, analogous to~\eqref{e:vanishingClmkn},
\begin{equation*}
\Cred_{j', j}= 0\qquad \text{for all } j \in \Jred \setminus \tildeset^{\text{sol}}_{\text{red}}(\tilde N)
\text{ and } j'\in \phase \cup \E^\dagger_{\textup{red}}.
\end{equation*}
Therefore, the tail estimate reduces to
\begin{equation}\label{e:tailAC}
  \sup_{j \in \Jred \setminus \set^{\text{sol}}_{\text{red}}(\tilde N) } 
  \frac{1}{\symweight_j} \rednorm{\Ared \Cred_{\cdot, j}} = \sup_{j \in \Jred \setminus \tildeset^{\text{sol}}_{\text{red}}(\tilde N) } 
  \frac{1}{\symweight_j} \sum_{j'\in \Jred} \vert \lambda_{j'}\vert \vert \Cred_{j', j} \vert \symweight_{\j'} .
\end{equation}
It follows from~\eqref{e:psired}
and the formula~\eqref{e:sigma} for the symmetrization $\Sigma$ that 
\begin{equation}\label{e:Credjj}
  \Cred_{j', j} 
  =  D\Psi_{\j'}(\Sigma\bar\phi)\Sigma e_{\j} = \sum_{\j'' \in G \acts \j} \talpha(\j,\j'')  D \Psi_{\j'}(\Sigma \bar\phi ) \e_{\j''} \,  , 
	\qquad \text{for } \j,\j' \in \Jred .
\end{equation}
For any  $j \in \Jred \setminus \tildeset^{\text{sol}}_{\text{red}}(\tilde N) $ 
we then use the triangle inequality to estimate
\begin{alignat*}{1}
  \frac{1}{\symweight_j} \sum_{j'\in \Jred} \vert \lambda_{j'}\vert \vert \Cred_{j', j} \vert \symweight_{\j'}
  & \leq
  \frac{1}{\symweight_j} \sum_{j'\in \Jred} \vert \lambda_{j'} \vert \symweight_{\j'} 
   \sum_{\j'' \in G \acts \j} \vert 
  D \Psi_{\j'}(\Sigma\bar\phi ) \e_{\j''}  \vert .
\end{alignat*}
Writing $\mathcal{C}_{j ,j'} \bydef  D \Psi_{\j}(\Sigma\bar\phi ) e_{\j'}$ for convenience,
we use Remark~\ref{r:orbitstabilizer}, Equation~\eqref{e:defxis} and Lemma~\ref{l:equivariantindices} below to obtain
\begin{alignat}{1}
\frac{1}{\symweight_{j}}\sum_{j'\in \Jred} \vert \lambda_{j'}\vert \symweight_{j'} \sum_{j''\in G.j} \vert \mathcal{C}_{j',j''} \vert & = \frac{1}{\symweight_{j}}\sum_{j'\in \Jred} \vert \lambda_{j'}\vert \symweight_{j'} \frac{1}{\vert G_{j}\vert }\sum_{g\in G} \vert \mathcal{C}_{j',g.j} \vert \nonumber \\
& = \frac{1}{\symweight_{j}\vert G_{j}\vert}\sum_{j'\in \Jred} \vert \lambda_{j'}\vert \symweight_{j'} \sum_{g\in G} \vert \mathcal{C}_{g.j',j} \vert \nonumber \\
& = \frac{1}{\weight_{j}\vert G\vert}\sum_{j'\in \Jred} \vert \lambda_{j'}\vert \symweight_{j'} \vert G_{j'}\vert \sum_{j''\in G.j'} \vert \mathcal{C}_{g.j',j} \vert \nonumber \\
& = \frac{1}{\weight_{j}}\sum_{j'\in J} \vert \lambda_{j'}\vert \vert \mathcal{C}_{j',j} \vert \weight_{j'} . \label{e:expressionwithoutsymmetry}
\end{alignat}
The right-hand side of~\eqref{e:expressionwithoutsymmetry} is exactly the one estimated in Section~\ref{s:Z1bound} with $\bw = \Sigma \bar\phi$.
Hence~\eqref{e:tailAC} is bounded by 
\[
\left(Z_1^\ered\right)^{\textup{tail}}  \bydef \max_{m=1,2,3} \left(Z_1^{\textup{tail}}\right)^{(m)}
\]
where the elements in the righthand side are given in~\eqref{e:Z1tail} with $\bar\omega= \Sigma \bar\phi$.

\begin{lemma}\label{l:equivariantindices}
	Let $\omega \in \Xsym$.
Writing $D\Psi(\omega)=\left(\mathcal{C}_{j',j}\right)_{j',j\in J}$, we have
\begin{equation*}
\left\vert \mathcal{C}_{j',j} \right\vert = \left\vert \mathcal{C}_{g^{-1}.j',g^{-1}.j} \right\vert \qquad \text{for all } j',j\in J
\text{ and } g \in G.
\end{equation*}
\end{lemma}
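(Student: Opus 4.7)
The plan is to derive the claimed identity from the $G$-equivariance of $\Psi$, which in turn follows from the equivariance of $F$ established in Lemma~\ref{lem:equivariance_F}. The point is that among the four terms in $F_n(W)=i\Omega n_4 \omega_n + \Psi_n(\omega) + \nu\tn^2 \omega_n - f^\omega_n$, the linear ones are manifestly equivariant under $\gamma_g$ (since $\beta_g$ preserves $|\tn|$ and $n_4$ because $C_g$ is a signed permutation), and $f^\omega$ is $G$-invariant by assumption on $f$. Comparing coefficients in $F(\gamma_g W)=\gamma_g F(W)$ therefore forces
\[
  \Psi(\gamma_g \omega) = \gamma_g \Psi(\omega) \qquad \text{for all } g\in G \text{ and } \omega\in X.
\]

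Differentiating this identity with respect to $\omega$ yields $D\Psi(\gamma_g \omega)\,\gamma_g = \gamma_g\, D\Psi(\omega)$, i.e.
\[
  D\Psi(\gamma_g \omega) = \gamma_g\, D\Psi(\omega)\, \gamma_g^{-1}.
\]
Since $\omega\in\Xsym$ satisfies $\gamma_g \omega = \omega$, this specializes to
\[
  D\Psi(\omega) = \gamma_g\, D\Psi(\omega)\, \gamma_g^{-1}.
\]

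The last step is to read off the matrix entries. Using~\eqref{e:actiononbasis} one computes
$\gamma_g^{-1} \e_{j} = \alpha_{g^{-1}}(g\acts j)\, \e_{g\acts j}$, so that the $j'$-coordinate of $D\Psi(\omega)\gamma_g^{-1} \e_j$ equals $\alpha_{g^{-1}}(g\acts j)\,\mathcal{C}_{j',\,g\acts j}$. Applying $\gamma_g$ via~\eqref{def:gamma} and taking the $j'$-component gives
\[
  \mathcal{C}_{j',j} = \alpha_g(j')\,\alpha_{g^{-1}}(g\acts j)\,\mathcal{C}_{g\acts j',\,g\acts j}.
\]
Since $|\alpha_g(\cdot)|=1$, taking absolute values yields $|\mathcal{C}_{j',j}|=|\mathcal{C}_{g\acts j',\,g\acts j}|$, and substituting $g\to g^{-1}$ gives the stated identity. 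I do not anticipate a serious obstacle; the only care needed is the bookkeeping between left/right actions and the use of Lemma~\ref{l:alpha}(a) to verify that the multiplicative factors appearing in the matrix-entry computation indeed collapse to unit modulus scalars, which is automatic here.
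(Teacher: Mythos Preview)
Your proposal is correct and follows essentially the same approach as the paper: derive the $G$-equivariance of $\Psi$, specialize the conjugation identity $D\Psi(\omega)=\gamma_g D\Psi(\omega)\gamma_g^{-1}$ at a symmetric $\omega$, and then read off matrix entries using~\eqref{e:actiononbasis} and~\eqref{def:gamma}. The only cosmetic difference is that the paper conjugates with $\gamma_{g^{-1}}$ and $\gamma_g$ to land directly on $\mathcal{C}_{g^{-1}\acts j',\,g^{-1}\acts j}$, whereas you first obtain $\mathcal{C}_{g\acts j',\,g\acts j}$ and then substitute $g\mapsto g^{-1}$.
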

\begin{proof}
Analogous to Lemma~\ref{lem:equivariance_F},
we observe that $\Psi$ is a $G$-equivariant, which implies that $ \Psi (\gamma_g \omega) = \gamma_g  \Psi (\omega)  $ for all $\omega \in \X$ and $g\in G$.
For $\omega \in \Xsym$ we then obtain 
\begin{equation*}
\gamma_{g^{-1}}D\Psi(\omega)\gamma_g=D\Psi(\omega)
\qquad \text{for all } g \in G.
\end{equation*}
Using~\eqref{e:actiononbasis} twice, we get for all $j',j\in J$, 
\begin{align*}
\mathcal{C}_{j',j} &= \left(D\Psi(\omega)\e_{j}\right)_{j'} \\
&= \left(\gamma_{g^{-1}}D\Psi(\omega)\gamma_g \e_{j}\right)_{j'} \\
&= \alpha_g(g^{-1}.j)\left(\gamma_{g^{-1}}D\Psi(\omega)\e_{g^{-1}.j}\right)_{j'} \\
&= \alpha_g(g^{-1}.j)\left(\gamma_{g^{-1}}\sum_{i\in J} \mathcal{C}_{i,g^{-1}.j}\e_i\right)_{j'} \\
&= \alpha_g(g^{-1}.j)\left(\sum_{i\in J} \mathcal{C}_{i,g^{-1}.j}\alpha_{g^{-1}}(g.i)\e_{g.i}\right)_{j'} \\
&= \alpha_g(g^{-1}.j)\alpha_{g^{-1}}(j') \mathcal{C}_{g^{-1}.j',g^{-1}.j},
\end{align*}
which yields the assertion, since $\vert \alpha_g(j')\vert =1$ for all $g\in G$ and all $j'\in J$.
\end{proof}

Finally, we set
\begin{equation} \label{eq:Z1red}
  Z_1^\ered \bydef \max \left\{ \left(Z_1^\ered\right)^{\textup{finite}}  , \left(Z_1^\ered\right)^{\textup{tail}}  \right\} .
\end{equation}

\subsubsection{bound \boldmath$Z_2^\ered$\unboldmath}
\label{s:symboundZ2}

For any $\varphi, \varphi' \in \XXred$ with $\rednorm{\varphi}, \rednorm{\varphi'} \leq 1$
we need to estimate
\begin{equation}
  \rednorm{\Ared D^2 \FFred (\bb)(\varphi,\varphi')} 
  =  \rednorm{\Ared \Pi D^2 \F (\bW)(W,W')} ,
\end{equation}
where $\bW=\Sigma\bb$, $W=\Sigma \varphi$, $W' = \Sigma \varphi' $, and by Lemma~\ref{l:symmetrycompatible} $\|W\|_{\XX},\|W'\|_{\XX} \leq 1$. 

As in Section~\ref{s:Z2bound}, we start by splitting 
\begin{equation*}
\rednorm{\Ared \Pi D^2 \F (\bW)(W,W')} \leq \left\Vert \Ared \right\Vert_{B(\XXred_{-1,-1},\XXred)} \left\Vert \Pi D^2 \F (\bW)(W,W') \right\Vert_{\XXred_{-1,-1}}.
\end{equation*}
But since $W,W' \in\XXsym$ and $\F$ is $G$-equivariant, we have that $D^2\F (\bW)(W,W')$ is $G$-invariant, i.e. belongs to $\XXsym$, and thus by Lemma~\ref{l:symmetrycompatible}
\begin{equation*}
\left\Vert \Pi D^2 \F (\bW)(W,W') \right\Vert_{\XXred_{-1,-1}} = \left\Vert D^2 \F (\bW)(W,W') \right\Vert_{\XX_{-1,-1}}.
\end{equation*}
Therefore, we can directly use the estimates of Section~\ref{s:Z2bound} to obtain
\begin{equation} \label{eq:Z2red}
  Z_2^\ered \bydef (4+\sqrt 2)\NNred{A} ,
\end{equation}
where
\begin{equation*}
\NNred{A} \bydef \max \left\{ \max_{j \in \E^\dagger_{\text{red}}} \frac{1}{\tilde \xi^s_j} \rednorm{(\Ared)_{\cdot,j}}  : 
    \frac{1}{\bO},\frac{1}{\sqrt{\nu N^\dagger}}\right\} ,
\end{equation*}
with 
\begin{equation*}
\tilde \xi^s_j = \frac{\xi^s_j}{\vert n\vert_\infty},\quad j=(n,m)\in \E^\dagger_{\text{red}}.
\end{equation*}

\section{Application to Taylor-Green flow}
\label{s:application}

In this section, we present the results obtained with our computer-assisted approach for the Navier-Stokes equations~\eqref{eq:NS} with forcing~\eqref{eq:TG_intro}.
%
%
In Section~\ref{sec:applic_background}, we set the stage by recalling some know analytic results in this context, complemented by numerical simulations. In Section~\ref{s:applic_2D}, we then show how the estimates of Section~\ref{s:symmetrybounds}, combined with Theorem~\ref{th:radii_pol_sym}, can be used to obtain rigorous existence results and error bounds about periodic solutions.

\subsection{Analytical and numerical background}
\label{sec:applic_background}

We consider the forced Navier-Stokes equations for an incompressible, homogeneous fluid
\[
\left\{ 
\begin{array}{l}
  \partial_t u + (u\cdot \nabla)u - \nu_0\Delta u + \dfrac{1}{\rho_0}\nabla P  = f_0 ,  \\[1mm]
  \nabla \cdot u = 0,
  \end{array} \right.
\]
on a cubical domain of dimension $L$ with periodic boundary conditions.
Here $\rho_0$ is the density of the fluid and $\nu_0$ is its viscosity,
while the forcing is chosen to be of the simple form
\[
f_0 (x) = 
\frac{\gamma_0}{2} \begin{pmatrix} \hfill \sin\frac{2\pi x_1}{L} \, \cos\frac{2\pi x_2}{L} \\[1mm] 
-\cos \frac{2\pi x_1}{L} \,\sin\frac{2 \pi x_2}{L} \\[1mm] 0\end{pmatrix},
\]
where  $\gamma_0$ parametrizes its intensity.
This corresponds to the planar case of a family of flows introduced by Taylor and Green \cite{Taylor1937} to study the interaction of motions on different spatial scales. Non-dimensionalization (without introducing new notation for the new variables)
leads to
\begin{equation}\label{eq:NSrepeated}
\left\{ 
\begin{array}{l}
  \partial_t u + (u\cdot \nabla)u - \nu \Delta u + \nabla p = f ,  \\[1mm]
  \nabla \cdot u = 0,
  \end{array} \right.
\end{equation}
on a cube of dimension $2 \pi$, where 
$\nu=\sqrt{\frac{32\pi^3}{\gamma_0 L^3}} \, \nu_0$ is a dimensionless parameter and the dimensionless forcing is
\begin{equation}\label{TG}
f=f(x)=\begin{pmatrix} \hfill 2 \sin x_1 \cos x_2  \\ -2\cos  x_1 \sin x_2 \\0\end{pmatrix}.
\end{equation}
To be able to compare with the literature
we use the geometric Reynolds number $Re=\frac{\sqrt{8\pi}}{\nu}$ in the discussion of the bifurcation diagram below, cf.~Figure~\ref{bif_diagram}.

The Navier-Stokes equations~\eqref{eq:NSrepeated} under the forcing~\eqref{TG} admit an equilibrium solution for which we have the analytic expression
\begin{alignat}{2}\label{visc_eq}
u^{*}(x)&=\frac{1}{2\nu} f(x) &\qquad
 p^{*}(x)&=\frac{1}{4\nu^2}\left(\cos  2x_1 +\cos 2x_2 \right).
\end{alignat}
We will refer to this solution as the {\em viscous equilibrium}. 
\begin{figure}
\begin{center}
\hspace{-1cm}
\includegraphics[width=.6\textwidth]{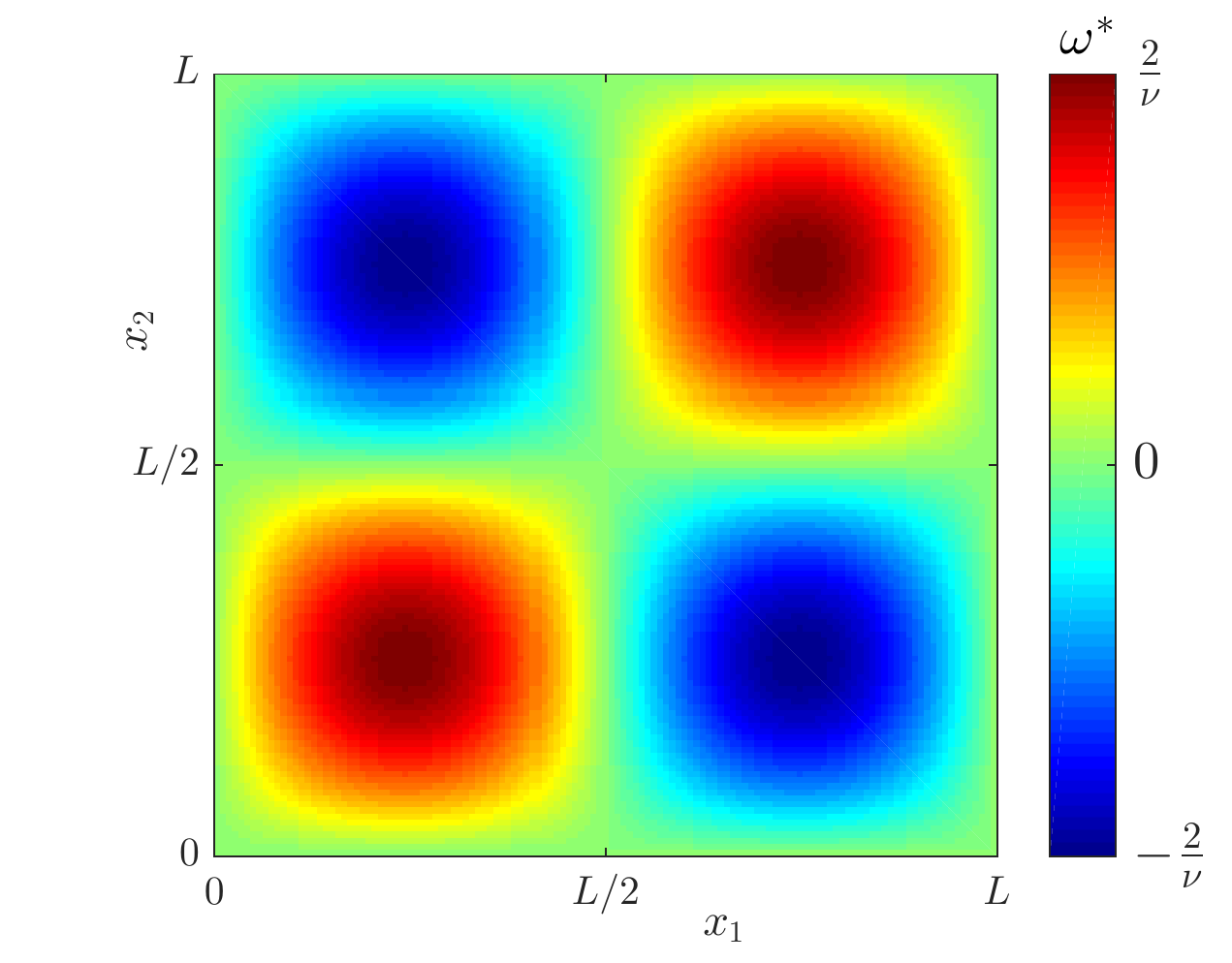}
\end{center}
\vspace{-.6cm}
\caption{Density plot the vertical vorticity of the viscous equilibrium state, $\omega^{*}=f^{\omega}/(2\nu)$. The maxima (red) correspond to vortices with a counter clockwise rotation and the minima (blue) to vortices with a clockwise rotation.}
\label{VE}
\end{figure}
The associated forcing for the vorticity equation is given by
\begin{equation}
\label{eq:rhs_visc}
\rhs(x) = \begin{pmatrix} 0 \\ 0 \\ 4\sin x_1  \sin x_2 \end{pmatrix}.
\end{equation}
The viscous equilibrium  consists of four counter-rotating vortices. Its vertical vorticity 
is shown in a plane of constant height in Figure~\ref{VE}. It is straightforward to verify that this solution is invariant under the following symmetry operations:
\begin{itemize}
\item Translation over any distance $d$ in the vertical direction, $T_{d}$.
\item Reflection in the $x_1$-direction, $S_{x_1}$.
\item Reflection in the $x_2$-direction, $S_{x_2}$.
\item Reflection in the $x_3$-direction, $S_{x_3}$.
\item Rotation about the axis $x_1=x_2=0$ over $\pi/2$ followed by a shift over $L/2$ in the $x_1$-direction, $R$.
\item A shift over $L/2$ in both the $x_1$ and $x_2$ directions, $D$.
\end{itemize}
These operations generate the group of spatial symmetries of the Navier-Stokes equation with planar Taylor-Green forcing. In addition, the system is equivariant under translations in time.
One may reduce this continuous symmetry to a discrete subgroup
by considering, for a periodic orbit of period $\tau$,  the shift by $\frac{\tau}{k}$ in time, $P_k$, for some $k \in \N$ ($k=4$ for the solutions studied below). 

The linear instability of the viscous equilibrium at high Reynolds number (that is at low viscosity) has been investigated at length in the literature, for instance by Sipp and Jacquin~\cite{SippJacquin}. In those studies, the emphasis is on the rapid formation of small-scale structures. In the current context, we are interested in the first instabilities that occur when increasing the Reynolds number from zero. Due to a classical result by Serrin \cite{Serrin1959a} the viscous equilibrium is guaranteed to be the unique limit state of the flow for any viscosity greater than
\[
\nu_{\rm s}=\sqrt[4]{\frac{8}{3+\sqrt{13}}}\approx 1.049 \qquad \left(Re_{\rm s} \approx 4.78\right).
\]

Below that value, linear instabilities occur, giving rise to branches of solutions for which some of the symmetries are broken. Such bifurcating branches can be numerically approximated by standard methods \cite{Sanchez2016}. Using these methods, we found that the first instability is a Hopf bifurcation that gives rise to a branch of two-dimensional periodic solutions. Subsequently, this branch appears to turn unstable at a point where at least one family of three-dimensional periodic solutions branches off. A partial bifurcation diagram is shown in Figure~\ref{bif_diagram}. 
In order to differentiate between the solutions, we compute the
deviation from reflection symmetry by computing the maximum of $\Enorm{u-S_{x_1}u}$, where 
\begin{equation*}
\Enorm{u} \bydef \frac{1}{2} \int u\!\cdot\!u \,dx
\end{equation*}
is the energy. We normalize this measure by the maximum it can attain, and display
\begin{equation}\label{e:symmetrybreakingmeasure}
  \max_{0 \leq t \leq \tau} \max_{0 \leq x_1 \leq 2\pi} \frac{\Enorm{u(t)-S_{x_1}u(t)}}{4\Enorm{u(t)}},
\end{equation}
on the vertical axis of the bifurcation diagram. 
\begin{figure}[t!]
\begin{center}
\includegraphics[width=.7\textwidth]{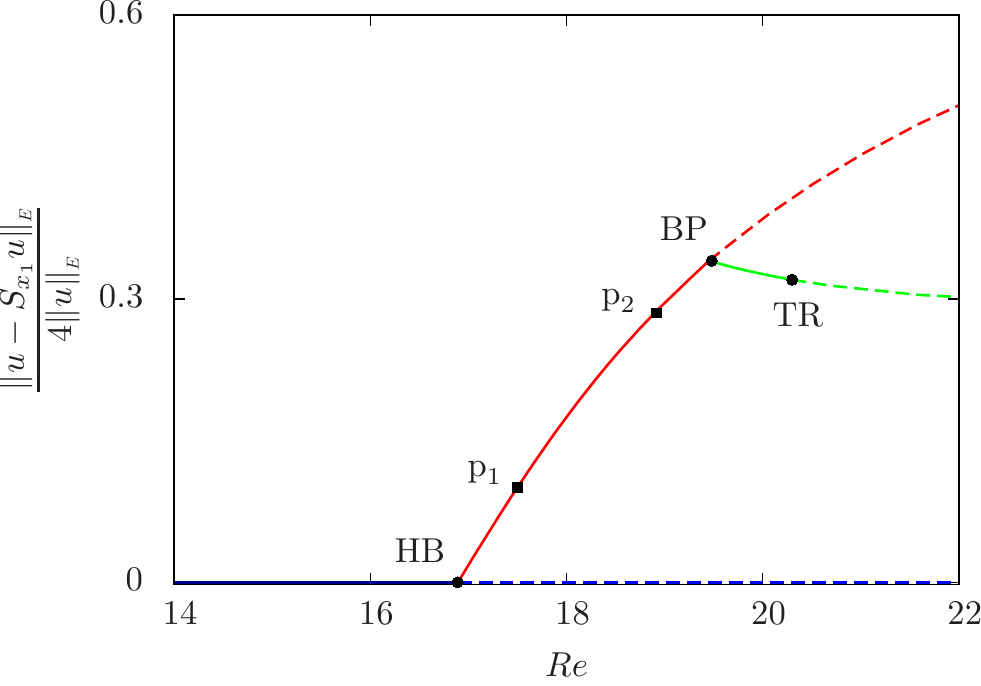}
\end{center}
\vspace{-.5cm}
\caption{ 
Partial numerical bifurcation diagram of the planar Taylor-Green flow,
where we use the geometric Reynolds number 
as the bifurcation parameter. Solutions that appear to be stable are shown with solid lines and unstable solutions with dashed lines. Shown on the vertical axis is the deviation from symmetry under the reflection $S_{x_1}$ as defined~\eqref{e:symmetrybreakingmeasure}. For periodic solutions, the maximal value over one period is shown. For increasing Reynolds number, the first instability of the viscous equilibrium, represented by the blue line, is a Hopf bifurcation (HB) to a branch of two-dimensional periodic orbits, shown in red. At the branch point labeled BP, the two-dimensional periodic orbit turns unstable and a family of three-dimensional periodic solutions branches off. The latter is stable near the branch point and turns unstable at a torus bifurcation point (TR). The solid squares correspond to the solutions proven to exist, as laid out in Theorems~\ref{thm:result1} and~\ref{thm:result2}.}
\label{bif_diagram}
\end{figure}
\subsection{New results for 2D periodic orbits}
\label{s:applic_2D}

Using a computer program in MATLAB, we computed a numerical approximation $\bW$ of a periodic orbit and applied Theorem~\ref{th:radii_pol_sym}, together with the bounds of Section~\ref{s:symmetrybounds}, to validate this solution  with explicit error bounds
, see Theorems~\ref{thm:result1} and~\ref{thm:result2} below. In the appendix, we describe how to recover errors bounds for the associated velocity $u$ and pressure $p$ that solve the Navier-Stokes equations. 

This validation was done for two separate orbits, represented by $\mathrm{p}_1$ and $\mathrm{p}_2$ in the bifurcation diagram of Figure~\ref{bif_diagram}. These two orbits are trivially invariant under $T_d$ and $S_{x_3}$. In addition, they are invariant under a symmetry group $G$ of order $16$, generated by the following three symmetries:
\begin{equation}
\label{def:sym}
g_1=S_{x_1}S_{x_2},\quad g_2=DS_{x_1},\quad g_3=P_4S_{x_1}R.
\end{equation}
The associated actions $a_{g_i}$ (see~\eqref{def:ag}) can be represented by the following matrices and vectors:
\begin{alignat*}{3}
C_{g_1}&=\begin{pmatrix}
-1 & 0 & 0 \\
0 & -1 & 0 \\
0 & 0 & 1
\end{pmatrix},&\quad \tilde C_{g_1}&=
\begin{pmatrix}
0 \\
0 \\
0
\end{pmatrix}
,&\quad D_{g_1}&=0,\\
C_{g_2}&=\begin{pmatrix}
-1 & 0 & 0 \\
0 & 1 & 0 \\
0 & 0 & 1
\end{pmatrix},&\quad \tilde C_{g_2}&=\frac{1}{2}\begin{pmatrix}
1 \\
1 \\
0
\end{pmatrix},&\quad D_{g_2}&=0,\\
C_{g_3}&=\begin{pmatrix}
0 & 1 & 0 \\
1 & 0 & 0 \\
0 & 0 & 1
\end{pmatrix},&\quad \tilde C_{g_3}&=\frac{1}{2}\begin{pmatrix}
1 \\
0 \\
0
\end{pmatrix},&\quad D_{g_3}&=\frac{1}{4}.
\end{alignat*}
These group actions can then be represented at the level of the Fourier coefficients of the vorticity, via $\alpha_g$ and $\beta_g$ (see~\eqref{e:symmetryequivalence}-\eqref{def:gamma} and Remark~\ref{rem:formula_alpha_beta}) given by
\begin{alignat*}{3}
\alpha_{g_1}(n,m) &= \left\{\begin{aligned}
&-1  &&m=1,2 \\
&1 &&m=3
\end{aligned}\right.
&&\qquad\text{and}\qquad&
\beta_{g_1}(n,m) &= ((-n_1,-n_2,n_3,n_4),m), \\
\alpha_{g_2}(n,m) & = \left\{\begin{aligned}
&(-1)^{n_1+n_2} &&m=1 \\
&-(-1)^{n_1+n_2} &&m=2,3
\end{aligned}\right.
&&\qquad\text{and}\qquad&
\beta_{g_2}(n,m) &= ((-n_1,n_2,n_3,n_4),m),\\
\alpha_{g_3}(n,m) & = -(-1)^{n_2}i^{n_4}
&&\qquad\text{and}\qquad&
\beta_{g_3}(n,m) &= \left\{\begin{aligned}
&((n_2,n_1,n_3,n_4),2) && m=1 \\
&((n_2,n_1,n_3,n_4),1) && m=2 \\
&((n_2,n_1,n_3,n_4),3) && m=3. \\
\end{aligned}\right.
\end{alignat*}

The symmetries are used to define the subspace $\Xred$ (as in
Section~\ref{s:redsymvar}), which is then used in
Theorem~\ref{th:radii_pol_sym}. In practice, these symmetries are of course
first found by inspecting the numerical solution. However, once a true solution
is shown to exist via Theorem~\ref{th:radii_pol_sym}, we automatically also get
a proof that this solution satisfies these symmetries.

The value of the other parameters ($\eta$, $N^\dag$, $\tilde N$, etc) that are
needed to defined $\Ared$ or used to compute the estimates of
Section~\ref{s:symmetrybounds}, are listed in Table~\ref{table:comput_para}. We choose a rectangular box of Fourier coefficients to compute the numerical approximate solutions:
\begin{equation}\label{e:rectangularSsol}
\set^{\sol}=\{n\in \Z^4_* : |n_1|\leq N_{x_1},\,|n_2|\leq N_{x_2},\,|n_3|\leq N_{x_3},\, |n_4|\leq N_t\}.
\end{equation}
See Remark~\ref{rem:choice_para} below for a brief discussion about how the
parameter values were chosen. 
\begin{theorem}
\label{thm:result1}
Let $\nu = 0.286$ and let $\rhs$ be given by~\eqref{eq:rhs_visc}. Let $G$ be the symmetry group generated by $g_1$, $g_2$, and $g_3$ given in~\eqref{def:sym}. Let $\bar{\omega}$ and $\bar{\Omega}$ be the Fourier data
time frequency provided in the file \verb+dataorbit1.mat+ (which can be downloaded at \cite{navierstokescode}). Let $r_{\sol}^{\omega}=2.6314 \cdot 10^{-5}$.
Then there exists a unique pair $(\tilde{\Omega},\tilde{\omega})$ satisfying the error estimate
\begin{equation}\label{e:thmbound}
  |\tilde{\Omega}-\bar{\Omega}| 
  + \sum_{m=1}^3 \| \tilde{\omega}^{(m)} - \bar{\omega}^{(m)} \|_{\ell^1_1} \leq r_{\sol}^{\omega} \, ,
\end{equation}
such that 
$\tilde{\omega}$ is an analytic, divergence free, $G$-invariant,  $2\pi/\tilde{\Omega}$-periodic solution of the vorticity equation~\eqref{eq:F_physical_space}.
\end{theorem}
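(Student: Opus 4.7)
The plan is to apply Theorem~\ref{th:radii_pol_sym} with the symmetry group $G$ generated by $g_1,g_2,g_3$ as in~\eqref{def:sym}, taking $\bar\varphi = (\bar\Omega, \bar\phi)$ where $\bar\phi = \Pi\bar\omega$ is the symmetry-reduced version of the numerical data from \verb+dataorbit1.mat+, with a fixed choice of weight $\eta$ and computational parameters $N^\dagger$, $\tilde N$ specified in Table~\ref{table:comput_para}. Before anything else, I would modify the raw numerical data so that $\bar\varphi \in \XXred_{\div}$ and $\tgamma_* \bar\varphi = \bar\varphi$, using the averaging/symmetrization procedure described in Remark~\ref{r:makesymmetric} (applying $\Pi \Av \Pi_{\div} \Sigma$ and the projection onto the $\tgamma_*$-fixed subspace); since $f$ given by~\eqref{eq:rhs_visc} is time independent, $G$-invariant and has zero spatial average, and $\bar\Omega$ can be assumed real, this puts us in a position to apply the theorem.

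The bulk of the proof then consists in rigorously computing the four constants $Y_0^\ered, Z_0^\ered, Z_1^\ered, Z_2^\ered$ satisfying~\eqref{def:Y_0_sym}--\eqref{def:Z_2_sym} by means of the explicit formulas~\eqref{eq:Y0red}, \eqref{eq:Z0red}, \eqref{eq:Z1red}, and~\eqref{eq:Z2red} derived in Section~\ref{s:symmetrybounds}. Each of these formulas splits into a \emph{finite} contribution (involving finitely many columns of operators acting on finitely many Fourier modes, supported on $\E^\dagger_{\text{red}}$ or $\tildeset^{\sol}_{\text{red}}(\tilde N)$) and, in the case of $Z_1^\ered$, a \emph{tail} contribution bounded analytically via the estimates of Section~\ref{sec:uniform_estimates} and~\eqref{e:Z1tail}. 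The finite parts are evaluated numerically using interval arithmetic (INTLAB) so that all floating point operations produce rigorous enclosures; this also requires representing $\bar\phi$ and $\hat\phi$ as interval-valued objects, which is harmless per Remark~\ref{r:makesymmetric}. The use of the essentially 2D nature of the solution allows the sharper constant $\sqrt 2$ in place of $\sqrt 3$ in~\eqref{e:Z1tail}, which is important for the estimates to close.

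Once all four constants are obtained, I would verify numerically (with interval arithmetic) that the radii-polynomial inequalities~\eqref{hyp:cond_pol_sym} hold, namely $Z_0^\ered + Z_1^\ered < 1$ and $2Y_0^\ered Z_2^\ered < (1-(Z_0^\ered+Z_1^\ered))^2$. Theorem~\ref{th:radii_pol_sym} then delivers a unique $\tilde\varphi = (\tilde\Omega,\tilde\phi) \in \B_{\XXred}(\bar\varphi,r_{\min})$ with $\FFred(\tilde\varphi)=0$, belonging to $\XXred_{\div}$, and with $\tilde\omega = \Sigma\tilde\phi$ analytic (since $\eta>1$), divergence-free, $G$-invariant, and $2\pi/\tilde\Omega$-periodic. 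Finally, to get the error estimate~\eqref{e:thmbound} in the $\ell^1_1$ norm rather than the $\ell^1_\eta$-weighted norm used in the analysis, I would check numerically that $r_{\min}$ computed via~\eqref{e:rmin} satisfies $r_{\min} \leq r_{\sol}^\omega = 2.6314 \cdot 10^{-5}$; since $\|\cdot\|_{\ell^1_1} \leq \|\cdot\|_{\ell^1_\eta}$ for $\eta \geq 1$, and $\|\tilde\varphi-\bar\varphi\|_{\XXred} = \|\Sigma\tilde\varphi-\Sigma\bar\varphi\|_{\XX}$ by Lemma~\ref{l:symmetrycompatible}, the stated bound follows.

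The main obstacle is quantitative rather than conceptual: obtaining bounds sharp enough for~\eqref{hyp:cond_pol_sym} to hold at this Reynolds number. In particular, $Z_1^\ered$ is the most delicate, since it involves both the control of the dominant linear terms (requiring $N^\dagger$ and $\tilde N$ to be large enough that the bounds from Section~\ref{sec:uniform_estimates} dominate the tail) and the computation of the norm of $\Ared C^{\text{red}}$ on a large finite portion indexed by $\tildeset^{\sol}_{\text{red}}(\tilde N)$. Balancing $N^\dagger$, $\tilde N$, and the size of $\set^{\sol}$ in~\eqref{e:rectangularSsol} against memory constraints -- exactly the reason for exploiting the $|G|=16$ symmetry reduction -- is what makes the proof computationally feasible.
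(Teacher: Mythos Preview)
Your overall strategy matches the paper's, but there is one genuine gap. You refer to the computational parameters in Table~\ref{table:comput_para}, where in fact $\eta=1$, and then write ``analytic (since $\eta>1$)''. These cannot both hold. Theorem~\ref{th:radii_pol_sym} requires $\eta>1$ strictly (analyticity comes from the geometric decay of the Fourier coefficients), yet the interval-arithmetic verification of~\eqref{hyp:cond_pol_sym} is actually carried out at $\eta=1$, because the estimates deteriorate as $\eta$ increases. The paper closes this gap by a continuity/monotonicity argument: the weights $\symweight_j(\eta)$, and hence all four bounds $Y_0^\ered,Z_0^\ered,Z_1^\ered,Z_2^\ered$, depend continuously on $\eta$, so the strict inequalities~\eqref{hyp:cond_pol_sym} verified at $\eta=1$ persist on some interval $[1,\tilde\eta]$ with $\tilde\eta>1$. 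One then applies Theorem~\ref{th:radii_pol_sym} for any $\eta\in(1,\tilde\eta]$ and uses monotonicity of the weights to pull the resulting error bound back to the $\ell^1_1$ norm, recovering~\eqref{e:thmbound} with the limiting value $r_{\min}(1)$. Without this step your argument either fails to establish analyticity (if you take $\eta=1$) or appeals to a computation that was not performed (if you take $\eta>1$).

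A second, smaller omission: after symmetrizing the raw data via Remark~\ref{r:makesymmetric} to obtain $\bar\phi_0\in\Xred_\div$ with $\tgamma_*\bar\phi_0=\bar\phi_0$, the theorem's error bound~\eqref{e:thmbound} is stated relative to the \emph{original} $\bar\omega$, not $\Sigma\bar\phi_0$. The paper therefore also computes a rigorous bound $r_0^\omega$ on $\sum_m\|\bar\omega^{(m)}-(\Sigma\bar\phi_0)^{(m)}\|_{\ell^1_1}$ and checks $r_{\min}+r_0^\omega<r_{\sol}^\omega<r_{\max}-r_0^\omega$ before concluding via the triangle inequality. Your proposal replaces the data but does not account for the discrepancy this introduces.
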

\begin{proof}
Let $\bar{\phi}=\Pi \bar{\omega}$ and $\bb=(\bar{\Omega},\bar{\phi})$.
Let $\hat{\phi}=\frac{1}{2}(\bar{\phi}+\Pi\gamma_*\Sigma\bar{\phi})$ be used to define $\Fred_\phase$.	

We start by using Remark~\ref{r:makesymmetric} to replace $\bar{\phi}$ by a divergence free variant $\bar{\phi}_0 \in \Xred_\div$ such that $\tgamma_* \bar{\phi}_0=\bar{\phi}_0$.
This new $\bar{\phi}_0$ still has a finite number of nonzero modes: $(\bar{\phi}_0)_n=0$ for $n \notin \set^{\sol}$.
We take $\eta=1$ and perform an interval arithmetic computation to determine bounds
\begin{equation*}
  \sum_{m=1}^3 \| \bar\omega^{(m)}- (\Sigma \bar{\phi}_0)^{(m)} \|_{\ell^1_1}
  \leq r_0^\omega .
\end{equation*}
This bounds absorbs rounding errors introduced in various symmetrization steps.

Let $\Fred$ and $\XXred$ (with $\eta=1$) be as defined in Section~\ref{s:Fred}. 
We now use $\bb_0=(\bar\Omega,\bar{\phi_0})$ instead of $\bb$ as the numerical approximate solution
and apply the 2D estimates to obtain the bounds $Y_0^\ered$, $Z_0^\ered$, $Z_1^\ered$ and $Z_2^\ered$ defined in Section~\ref{s:symmetrybounds}.
Using the script \verb+runprooforbit1+, available at~\cite{navierstokescode}, with $\eta=1$ and the other parameters chosen as in Table~\ref{table:comput_para}, we checked that the inequalities~\eqref{hyp:cond_pol_sym} are satisfied. In order to make this verification rigorous, all the bounds are evaluated using the interval arithmetic package INTLAB~\cite{Intlab}. 

Recalling from \eqref{eq:weights_over_J} that $\weight_\j  =\eta^{|n|_1}$, one has that the symmetric weights $\symweight_\j
= \symweight_\j (\eta)$ given in \eqref{e:defxis} are continuous in the parameter $\eta$. Moreover, $\symweight_\j (\eta)$ is increasing in $\eta$.
Hence, the bounds $Y_0^\ered$, $Z_0^\ered$, $Z_1^\ered$ and $Z_2^\ered$ (given respectively in \eqref{eq:Y0red}, \eqref{eq:Z0red}, \eqref{eq:Z1red} and \eqref{eq:Z2red}) are also continuous functions of $\eta$. By continuity of the bounds in $\eta$, since the strict inequalities in \eqref{hyp:cond_pol_sym} are satisfied for $\eta=1$, they still hold for all $\eta \in [1,\tilde{\eta}]$ for some $\tilde{\eta}>1$. In particular, we can use Theorem~\ref{th:radii_pol_sym} for any $\eta \in (1,\tilde{\eta}]$, which yields the existence of a unique zero $\tilde{\varphi}=(\tilde\Omega,\tilde\phi)$
of $\FFred$, 
so that, by monotonicity of $\symweight_\j (\eta)$,
\begin{equation*}
|\tilde{\Omega}-\bar{\Omega}| 
  + \sum_{j \in \Jred}  \symweight_\j (1)   | \tilde{\phi}_j - (\bar{\phi_0})_j | \leq
|\tilde{\Omega}-\bar{\Omega}| 
  + \sum_{j \in \Jred} \symweight_\j (\eta)  \, | \tilde{\phi}_j - (\bar{\phi_0})_j |  \leq  r,
\end{equation*} 
for each $r \in [r_{\min}(\eta),r_{\max}(\eta))$, with
$r_{\min}(\eta)$ and $r_{\max}(\eta)$ given in~\eqref{e:rmin}
and~\eqref{e:rmax}, respectively. By continuity we find
\begin{equation*}
	|\tilde{\Omega}-\bar{\Omega}| 
	  + \sum_{j\in\Jred}  |G \acts j| \, | \tilde{\phi}_j - (\bar{\phi_0})_j |
	  \leq  r_{\min},
\end{equation*}
where $r_{\min} = r_{\min}(1)$, and have that the solution $\tilde\varphi$ is unique in the ball of radius $r$ around 
$\bar{\varphi_0}$ (with $\eta=1$) for any $r < r_{\max} =  r_{\max}(1)$.

To translate this existence and uniqueness result to the vorticity,
let $\tilde{\omega}=\Sigma\tilde\phi\in (\ell^1_{\tilde{\eta}})^3$.
The asserted properties of $\tilde{\omega}$ follow
from the arguments given towards the end of the proof of Theorem~\ref{th:radii_pol_sym},
and by applying the triangle inequality after checking that $r_{\min} + r_0^\omega < r_{\sol}^\omega < r_{\max}-r_0^\omega$.
\end{proof}

The solution in Theorem~\ref{thm:result1} represents the point $\mathrm{p}_1$ in Figure~\ref{bif_diagram}.
Four snap shots of the third component (the nontrivial one for a 2D solution) of the vorticity field~$\bar\omega$ from Theorem~\ref{thm:result1} are shown in Figure~\ref{2DPO}. In exactly the same way, but with the script \verb+runprooforbit2+ we prove
\begin{theorem}
\label{thm:result2}
Let $\nu=0.265$ 
and $r_{\sol}^\omega=2.2491 \cdot 10^{-6}$. 
Let $\bar{\omega}$ and $\bar{\Omega}$ be the Fourier data
time frequency provided in the file \verb+dataorbit2.mat+. 
Then there exists a unique pair $(\tilde{\Omega},\tilde{\omega})$ satisfying the error estimate~\eqref{e:thmbound},
such that 
$\tilde{\omega}$ is an analytic, divergence free, $G$-invariant,  $2\pi/\tilde{\Omega}$-periodic solution of the vorticity equation~\eqref{eq:F_physical_space}.
\end{theorem}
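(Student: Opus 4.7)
The plan is to follow the proof of Theorem~\ref{thm:result1} essentially verbatim, since the only differences are the numerical value of the viscosity ($\nu=0.265$ instead of $\nu=0.286$), the Fourier data (stored in \verb+dataorbit2.mat+ instead of \verb+dataorbit1.mat+), and the target radius $r_{\sol}^\omega$. The symmetry group $G$ generated by $g_1,g_2,g_3$ in~\eqref{def:sym}, the forcing~\eqref{eq:rhs_visc}, and the whole functional-analytic framework developed in Sections~\ref{s:setup}--\ref{s:symmetrybounds} remain the same.

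Concretely, I would first load $\bar\omega$ and $\bar\Omega$ from \verb+dataorbit2.mat+, set $\bar\phi=\Pi\bar\omega$ and $\hat\phi = \tfrac{1}{2}(\bar\phi + \Pi\gamma_*\Sigma\bar\phi)$, and then apply the recipe in Remark~\ref{r:makesymmetric} to produce $\bar\phi_0\in\XXred_\div$ satisfying $\tgamma_*\bar\phi_0=\bar\phi_0$, while bounding the symmetrization residue by some $r_0^\omega$ via interval arithmetic. Next, taking $\eta=1$ and using the computational parameters specified for orbit 2 in Table~\ref{table:comput_para}, I would evaluate the four bounds $Y_0^\ered$, $Z_0^\ered$, $Z_1^\ered$, $Z_2^\ered$ from Sections~\ref{s:symboundY}--\ref{s:symboundZ2} using INTLAB, by running the script \verb+runprooforbit2+ available at~\cite{navierstokescode}.

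The crucial (computer-assisted) step is to verify that the resulting interval enclosures satisfy the radii polynomial inequalities~\eqref{hyp:cond_pol_sym}; this is where this proof really differs from that of Theorem~\ref{thm:result1}, because at the lower viscosity $\nu=0.265$ the linear damping is weaker, the solution is farther from the viscous equilibrium, and typically more Fourier modes are required to keep the bounds small enough. Assuming the inequalities hold at $\eta=1$, continuity and monotonicity of the symmetric weights $\symweight_j(\eta)$ in $\eta$ (as in the proof of Theorem~\ref{thm:result1}) ensures they still hold for some $\eta=\tilde\eta>1$, so Theorem~\ref{th:radii_pol_sym} applied at this $\tilde\eta$ yields a unique $\tilde\varphi=(\tilde\Omega,\tilde\phi)\in\XXred_\div$ with $\FFred(\tilde\varphi)=0$, lying within distance $r_{\min}$ of $\bar\varphi_0$ in the $\eta=1$ norm, and with $\tilde\omega=\Sigma\tilde\phi$ analytic, divergence free, $G$-invariant, and $2\pi/\tilde\Omega$-periodic.

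Finally, I would combine the symmetrization error $r_0^\omega$ with $r_{\min}$ via the triangle inequality, verifying numerically that $r_{\min}+r_0^\omega \le r_{\sol}^\omega = 2.2491\cdot 10^{-6} < r_{\max}-r_0^\omega$, which transfers the existence and uniqueness result from $(\bar\phi_0,\bar\Omega)$ and $\XXred$ back to the original $(\bar\omega,\bar\Omega)$ in terms of the $\sum_{m=1}^3\|\cdot\|_{\ell^1_1}$ norm on the vorticity components, yielding~\eqref{e:thmbound}. The main obstacle I anticipate is purely computational: guaranteeing that the finite-dimensional bound $Z_1^{\textup{finite}}$ (and the associated $A^{(N^\dag)}$) can be evaluated with enough sharpness at the memory and runtime cost imposed by the 16-fold symmetry reduction, and that $(Z_1^\ered)^{\textup{tail}}$ is made small enough by choosing $\tilde N$ sufficiently large in view of the factor $1/\sqrt{\nu\tilde N}$ in~\eqref{e:Z1tail}.
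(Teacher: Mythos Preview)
Your proposal is correct and matches the paper's approach exactly: the paper simply states that Theorem~\ref{thm:result2} is proved ``in exactly the same way'' as Theorem~\ref{thm:result1} but using the script \verb+runprooforbit2+, and it likewise notes that the computational cost is considerably higher because more Fourier modes are needed at the lower viscosity.
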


\begin{remark}
 As a direct corollary of Theorem~\ref{thm:result2}, we get a validated  velocity $u$ and pressure~$p$ that solves the Navier-Stokes equations via  Lemma~\ref{lem:eq_NS_vorticity}, with explicit error bounds presented in the  appendix (Lemma~\ref{lem:error_estimates}). In particular,
 Theorem~\ref{thm:NS_result} holds.
\end{remark}

The solution from Theorem~\ref{thm:result2}, corresponding to the point $\mathrm{p}_2$ in Figure~\ref{bif_diagram}, is depicted in Figure~\ref{2DPO_intro}.
The script \verb+mimicproofsfloats+ may be used to perform faster but non-rigorous versions of the computations using floating point calculations rather than interval arithmetic. The computational cost for Theorem~\ref{thm:result2} is considerably higher than for Theorem~\ref{thm:result1}, as more Fourier modes need to be considered to make the proof work.

\begin{figure}[t]
\begin{center}
\hspace{2cm}
\includegraphics[width=0.8\textwidth]{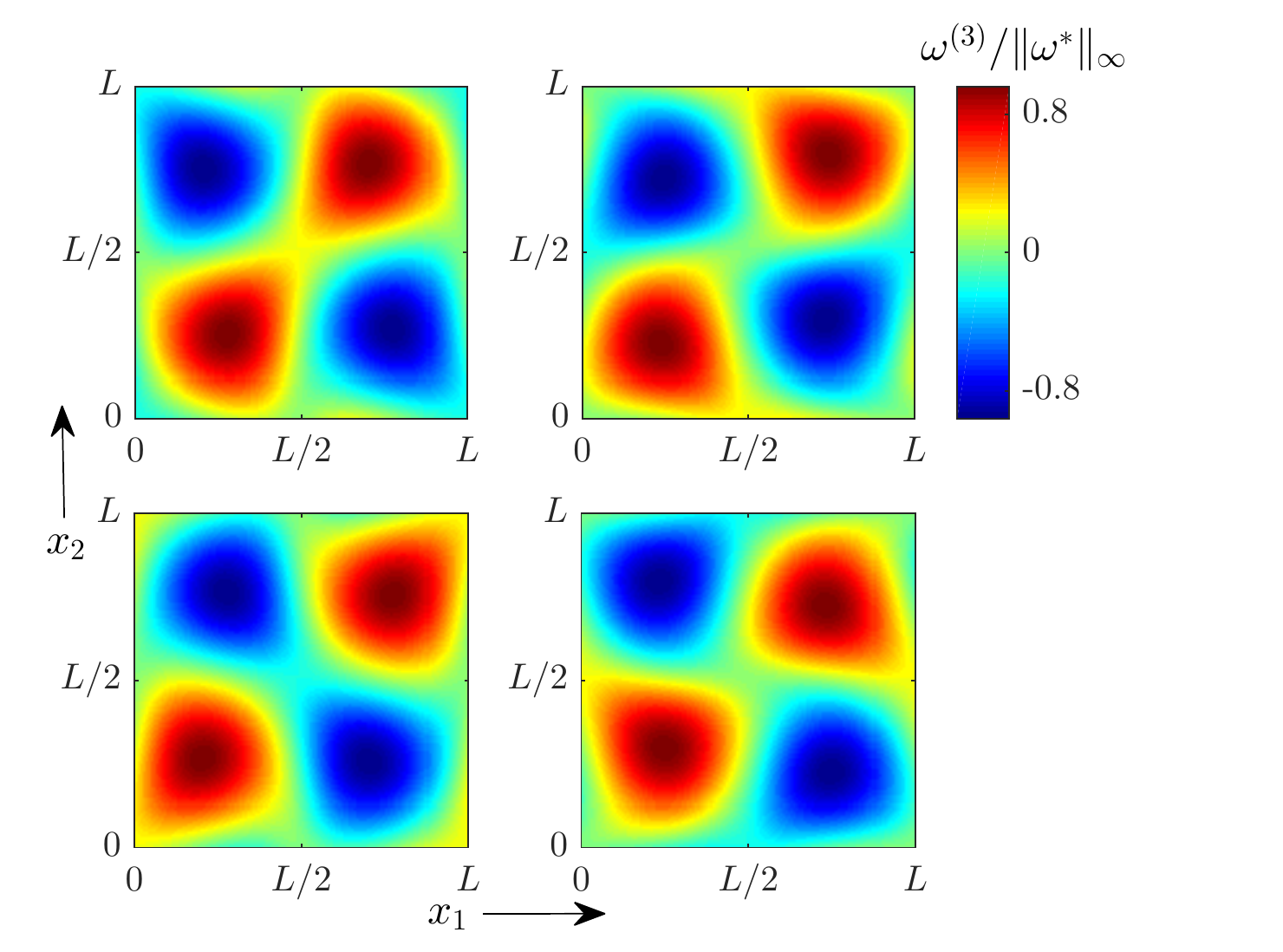}
\end{center}
\vspace{-.5cm}
\caption{
Physical space portrait of a two-dimensional periodic orbit at $\nu=0.286$ ($Re \approx 17.5$),
corresponding to the solution labeled $\mathrm{p}_{1}$ in the bifurcation diagram in Figure~\ref{bif_diagram}. Shown is the value of $\omega^{(3)}$ at four time instances
a quarter of the period apart, normalized by the amplitude of the viscous
equilibrium at the same viscosity.}
\label{2DPO}
\end{figure}

\begin{table}[t]
\begin{center}
\begin{tabular}{|c|c|c|c|c|c|c|c|c|c|c|}\hline
              & $\eta$ & $N_{x_1}$ & $N_{x_2}$ & $N_{x_3}$ & $N_t$ & $N^{\dagger}$ & $\widetilde{N}$ & RAM (GB) & CPU days \\\hline
$\mathrm{p}_1$  & $1$ &  17  &   17  &   0  &  11   & 130       &  265   & 10   & 6 \\\hline 
$\mathrm{p}_2$  & $1$ & 21  &   21  &   0  &  16   & 210       &  425   & 110   & 95 \\\hline
\end{tabular}
\caption{Parameters for the two rigorously computed solutions of the Navier-Stokes equations with Taylor-Green forcing~\eqref{eq:TG_intro}. The number $N_{x_1}$, $N_{x_2}$, $N_{x_3}$ and $N_t$ define the set $\set^{\sol}$ in~\eqref{e:rectangularSsol}. The solutions are indicated by labels $\mathrm{p}_1$ and $\mathrm{p}_2$ in bifurcation diagram \ref{bif_diagram} and are computed for $\nu=0.286$ and $\nu=0.265$ respectively. The computations were timed on an Intel Xeon E5-1620v2 with a 3.7GHz clock speed.}
\label{table:comput_para}
\end{center}
\end{table}
\begin{remark}
Since the size of the estimates deteriorates for larger values of $\eta$, we choose $\eta =1$.
By continuity of the bounds in $\eta$, as explained in the proof of Theorem~\ref{thm:result1}, this is enough to prove that we obtain an analytic solution, although we do not get an explicit decay rate of the Fourier coefficients. If obtaining such decay rate was needed, for instance to obtain error estimates on derivatives of the solution, or to obtain an explicit domain of analyticity, this could be achieved by using some explicit $\eta>1$ to compute the bounds $Y_0^\ered$, $Z_0^\ered$, $Z_1^\ered$ and $Z_2^\ered$.
\end{remark}
	
\begin{remark}
\label{rem:choice_para}
We briefly discuss the choices made for the other computational parameters.
The $Z_2^{\text{\textup{red}}}$ bound is relatively insensitive to the computational parameters (as it roughly measures the size of the second derivative at the numerical approximation). We then consider a set $\set^\sol$ of rectangular shape (as explained in Table~\ref{table:comput_para}), with sufficiently many modes $N^{\sol}_{x_1}=N^{\sol}_{x_2}$ and $N^{\sol}_t$ (for the 2D solution $N^{\sol}_{x_3}=0$) to make the bound $Y_0^{\text{\textup{red}}}$ on the residual so small that $2 Y_0^{\text{\textup{red}}} Z_2^{\text{\textup{red}}}$ is roughly of size $10^{-2}$. 
The choice of this threshold size is based on two criteria. Primarily, we choose it small, so that at the next step (see below) $Z_1^{\text{\textup{red}}}$ is allowed to be very close to $1$, as decreasing~$Z_1^{\text{\textup{red}}}$ is computationally very costly. On the other hand, the threshold should not be too small, as that would require many modes in $\set^\sol$ to decrease the residual sufficiently.

Subsequently, one may determine from a relatively cheap calculation how large
$\tilde{N}$ needs to be for the tail bound to satisfy
$1-(Z_1^{\text{\textup{red}}})^{\text{\textup{tail}}} > (2
Y_0^{\text{\textup{red}}} Z_2^{\text{\textup{red}}})^{1/2}$. We note that $Z_0^{\text{\textup{red}}}$ is negligible in practice. Finally, it
requires some experimentation to determine how large one needs to choose~$N^\dagger$ in order to also satisfy
$1-(Z_1^{\text{\textup{red}}})^{\text{\textup{finite}}} > (2
Y_0^{\text{\textup{red}}} Z_2^{\text{\textup{red}}})^{1/2}$. 
\end{remark}

\section{Appendix}
\label{s:appendix}

The estimates obtained in Section~\ref{s:estimates} and
Section~\ref{s:symmetrybounds} are used as input for 
Theorem~\ref{th:radii_pol_sym}, which allows us to validate symmetric periodic
solutions $\omega$ of the vorticity equation, with explicit error bounds, as
illustrated in Section~\ref{s:application}. In this appendix, we describe how to
recover errors bounds for the associated velocity $u$ and pressure
$p$ that solve the Navier-Stokes equations.

We start with a variation, adapted to our framework, of the classical result stating that a curl-free vector field can be written as a gradient, which was used already in the proof of Lemma~\ref{lem:eq_NS_vorticity}.

\begin{lemma}
\label{lem:exist_grad}
Let $\Phi\in \left(\C^3\right)^{\Z^4}$ satisfy
\begin{equation*}
 \left\{
 \begin{aligned}
 &\nabla\times\Phi=0 \\
 &\Phi_{n}=0,\qquad\text{for all } \tn=0.
 \end{aligned}
 \right.
 \end{equation*} 
Then the map $\Gamma:\left(\C^3\right)^{\Z^4} \to \C^{\Z^4}$ constructed component-wise as
\[
  (\Gamma \Phi)_n = 
  \begin{cases}
  -i\Phi^{(k)}_n/n_k & \text{if } n_k \neq 0 \text{ for any } k=1,2,3 \\
  0 & \text{if } \tn=0,
  \end{cases}
\]
is well defined, and $p=\Gamma\Phi$ satisfies $\Phi=-\nabla p$.
\end{lemma}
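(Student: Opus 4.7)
The proof splits into two parts: (i) well-definedness of $\Gamma$, and (ii) verification that $\Phi = -\nabla p$ componentwise. Both parts proceed by direct calculation in Fourier space, using that the curl and gradient act only on the three spatial variables while $n_4$ passes through passively.

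For well-definedness, I would translate the curl-free hypothesis into Fourier space: writing $(\nabla \times \Phi)^{(l)} = \partial_j \Phi^{(k)} - \partial_k \Phi^{(j)}$ for each cyclic triple $(j,k,l)$ and reading off coefficients, the condition $\nabla \times \Phi = 0$ is equivalent to the pairwise relations
\[
  n_j \Phi^{(k)}_n = n_k \Phi^{(j)}_n \qquad \text{for all } n \in \Z^4 \text{ and all } j,k \in \{1,2,3\}.
\]
Consequently, whenever both $n_j \neq 0$ and $n_k \neq 0$, one has $\Phi^{(k)}_n / n_k = \Phi^{(j)}_n / n_j$, so the quantity $-i \Phi^{(k)}_n / n_k$ is independent of the chosen index $k$ among those with $n_k \neq 0$. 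When $\tn = 0$ the definition assigns $(\Gamma \Phi)_n = 0$ outright, so no choice is involved.

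For the gradient identity I would check the two cases of the definition separately. If $\tn = 0$, then $p_n = 0$ and also $n_j = 0$ for $j \in \{1,2,3\}$, hence $(-\nabla p)^{(j)}_n = -i n_j p_n = 0$, which matches $\Phi^{(j)}_n = 0$ by the second hypothesis. If $\tn \neq 0$, I would fix any $k$ with $n_k \neq 0$ and compute, for each $j \in \{1,2,3\}$,
\[
  (-\nabla p)^{(j)}_n = -i n_j p_n = -i n_j \cdot \bigl(-i \Phi^{(k)}_n / n_k\bigr) = -\tfrac{n_j}{n_k}\Phi^{(k)}_n,
\]
and then invoke the pairwise curl-free relation $n_j \Phi^{(k)}_n = n_k \Phi^{(j)}_n$ to rewrite the right-hand side as $\Phi^{(j)}_n$ (up to a sign bookkeeping check that confirms consistency with the chosen convention for the Fourier symbol of the gradient).

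There is no real obstacle here: the whole argument is a short, algebraic bookkeeping exercise in Fourier coordinates. The only mild subtlety is keeping the consistency check (well-definedness) conceptually distinct from the verification (the gradient identity), since both steps rest on the same pairwise relations $n_j \Phi^{(k)}_n = n_k \Phi^{(j)}_n$ but use them in opposite directions — in step (i) to show that the right-hand side of the definition collapses to a single value, and in step (ii) to propagate that value from one component index to another.
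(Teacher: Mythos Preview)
Your proposal is correct and follows essentially the same route as the paper: translate $\nabla\times\Phi=0$ into the pairwise relations $n_j\Phi^{(k)}_n=n_k\Phi^{(j)}_n$, use them once for well-definedness and once for the gradient identity, with the case $\tn=0$ handled trivially by hypothesis. The only cosmetic difference is that in part~(ii) the paper splits, for each component index~$l$, into the subcases $n_l\neq 0$ (use $k=l$ directly) and $n_l=0$ (both sides vanish, invoking the curl relation to see $\Phi^{(l)}_n=0$), whereas you fix one $k$ with $n_k\neq 0$ and treat all~$j$ uniformly via the curl relation; your organization is arguably slightly cleaner.
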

\begin{proof}
To ensure that $\Gamma$ is well defined, it suffices to show that, for all $n\in\Z^4$ and $l,m \in \{1,2,3\}$
\begin{equation}
\label{eq:N_well_defined}
\text{if } n_l,n_m \neq 0 \quad\text{ then }\quad \frac{\Phi_n^{(l)}}{n_l}=\frac{\Phi_n^{(m)}}{n_m}.
\end{equation}
Indeed, since $\nabla\times\Phi=0$, we have that for all $n\in\Z^4$ and all 
$l,m\in\{1,2,3\}$,
\begin{equation}
\label{eq:rot_free_coord}
n_l\Phi_n^{(m)}=n_m\Phi_n^{(l)},
\end{equation}
which immediately yields~\eqref{eq:N_well_defined}. Therefore $p=\Gamma\Phi$ is well defined, and we are left to check that $\Phi=-\nabla p$. If $\tn=0$ then we have 
\begin{equation*}
 \Phi_n=-\left(\nabla p\right)_n,
\end{equation*} 
because we assumed $\Phi_{n}=0$ for all $\tn=0$. If $\tn\neq 0$, for any $l\in\{1,2,3\}$ we distinguish between two cases. If $n_l\neq 0$, then
\begin{equation*}
-\left(\nabla p\right)_n^{(l)} = -in_l p_n 
=-in_l\frac{-i \Phi_n^{(l)}}{n_l} 
= \Phi_n^{(l)}.
\end{equation*}
If $n_l= 0$, then $-\left(\nabla p\right)_n^{(l)}=0$, but there exists an $m\neq l$ such that $n_m\neq 0$ and thus by~\eqref{eq:rot_free_coord} we find $\Phi_n^{(l)}=0$, i.e. $-\left(\nabla p\right)_n^{(l)}= \Phi_n^{(l)}$ also holds.
\end{proof}

The above lemma can be used in the context of Navier-Stokes equations, to recover the pressure from the velocity (we recall that the velocity itself is recovered from the vorticity via $u=M\omega$). We point out that an alternative (arguably more classical) approach is to define $p$ as the solution of the Poisson equation
\begin{equation}
\label{eq:Poisson_p}
-\Delta p= \nabla \cdot\left((u\cdot\nabla)u\right) - \nabla\cdot f,
\end{equation}
satisfying
\begin{equation*}
\int_{\T^3} p(x,t) dx = 0.
\end{equation*}
Indeed, the latter approach is going to be useful in the sequel, as~\eqref{eq:Poisson_p} allows to recover sharper error bounds for the pressure
(compared to using Lemma~\ref{lem:exist_grad} only).

Our aim is to derive error estimates for the velocity and the pressure, that can be applied as soon as we have validated a divergence-free solution $W$ of the vorticity equation via Theorem~\ref{th:radii_pol} or Theorem~\ref{th:radii_pol_sym}.

\begin{lemma}
\label{lem:error_estimates}
Assume that for some $\bW=(\bO,\bw)\in\XX^\div$, $\eta>1$, we have proven the existence of $r>0$ and of $W=(\Omega,\omega)\in\B_{\XX^\div}(\bar W,r)$ such that $\F(W)=0$. Define 
\begin{equation*}
u=M\omega,\quad \bu=M\bw, \quad  p=\Gamma\Phi,
\end{equation*}
where $\Phi$ is defined in~\eqref{e:defPhi} and $\Gamma$ is defined as in Lemma~\ref{lem:exist_grad}. We also consider the sequence $\bar p\in \C^{\Z^4}$  defined as 
\begin{equation*}
\bar p_n = 
\left\{\begin{aligned}
& 0 \quad & \text{if } n=(\tn,n_4)\in\Z^3\times\Z,\ \tn= 0, \\
& -\frac{1}{\tn^2}\sum_{l=1}^3 n_l \left(\left[\left(\bu \star \tD\right) \bu\right]^{(l)}_n +i f^{(l)}_n\right) \quad & 
\text{if } n=(\tn,n_4)\in\Z^3\times\Z,\ \tn\neq 0.
\end{aligned}\right.
\end{equation*}
Then, we have the following error estimates for the velocity and the pressure:
\begin{equation*}
\left\Vert u - \bar u \right\Vert_\X \leq r \quad \text{and}\quad \left\Vert p - \bar p \right\Vert_{\ell^1_\eta} \leq \left(2\left\Vert \bar u \right\Vert_\X + r\right)r.
\end{equation*}
\end{lemma}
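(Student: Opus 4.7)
The proof splits into the velocity bound and the pressure bound; I will handle them in turn.

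For the velocity, observe that $u-\bar u = M(\omega-\bar\omega)$, so it suffices to show that $M$ is non-expansive as an operator from $\X$ to $\X$. Using the explicit antisymmetric structure~\eqref{e:defM} of $M_n$, a componentwise calculation gives
\[
  \sum_{l=1}^3 |(M_n v)^{(l)}|
  \leq \frac{|n_1|+|n_2|+|n_3|}{\tn^2}\sum_{m=1}^3 |v^{(m)}|
  \qquad \text{for all } \tn \neq 0.
\]
Since $\tn \in \Z^3 \setminus \{0\}$ satisfies $|n_j|\leq n_j^2$, we have $(|n_1|+|n_2|+|n_3|)/\tn^2 \leq 1$. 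Multiplying by $\eta^{|n|_1}$ and summing over $n$ yields $\|Mw\|_\X \leq \|w\|_\X$. Applying this to $w=\omega-\bar\omega$ together with $\|\omega-\bar\omega\|_\X \leq \|W-\bar W\|_\XX \leq r$ gives the first estimate.

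For the pressure, I would start by deriving the Poisson-type formula for $p$. Since $F(W)=0$ and $\omega$ is divergence-free, Lemma~\ref{lem:eq_NS_vorticity} gives that $(u,p)$ solves Navier--Stokes. Taking the divergence of~\eqref{eq:NS} and using $\nabla\cdot u=0$ produces $-\Delta p = \nabla\cdot((u\cdot\nabla)u)-\nabla\cdot f$, which in Fourier space (for $\tn\neq 0$) reads
\[
  p_n = -\frac{1}{\tn^2}\sum_{l=1}^3 n_l\bigl([(u\star\tD)u]_n^{(l)} + i f_n^{(l)}\bigr),
\]
precisely the formula defining $\bar p_n$ with $u$ replaced by $\bar u$. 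Subtracting and writing $\delta=u-\bar u$, we obtain
\[
  p_n - \bar p_n = -\frac{1}{\tn^2}\sum_{l=1}^3 n_l\bigl([(u\star\tD)u]_n^{(l)}-[(\bar u\star\tD)\bar u]_n^{(l)}\bigr).
\]

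The key algebraic step is to apply Lemma~\ref{lem:trick_derivative} to both $u$ and $\bar u$ (each being divergence-free) in order to pull out the derivative and expose a convolution structure:
\[
  \sum_{l=1}^3 n_l\bigl([(u\star\tD)u]_n^{(l)}-[(\bar u\star\tD)\bar u]_n^{(l)}\bigr)
  = \sum_{l,m=1}^3 n_l n_m \bigl[\delta^{(m)} \ast u^{(l)} + \bar u^{(m)} \ast \delta^{(l)}\bigr]_n .
\]
For integer $\tn \neq 0$, we have $|n_l n_m|/\tn^2 \leq 1$ (by the AM--GM inequality $|n_l n_m|\leq (n_l^2+n_m^2)/2 \leq \tn^2$), so this factor can be absorbed. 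Multiplying by $\eta^{|n|_1}$, summing over $n$, and invoking the Banach algebra property of $\ell^1_\eta$ under convolution gives
\[
  \|p-\bar p\|_{\ell^1_\eta}
  \leq \sum_{l,m=1}^3 \bigl(\|\delta^{(m)}\|_{\ell^1_\eta}\|u^{(l)}\|_{\ell^1_\eta}+\|\bar u^{(m)}\|_{\ell^1_\eta}\|\delta^{(l)}\|_{\ell^1_\eta}\bigr)
  = \|\delta\|_\X\bigl(\|u\|_\X + \|\bar u\|_\X\bigr).
\]
The triangle inequality $\|u\|_\X \leq \|\bar u\|_\X + \|\delta\|_\X \leq \|\bar u\|_\X + r$ and the velocity bound $\|\delta\|_\X \leq r$ then deliver $\|p-\bar p\|_{\ell^1_\eta} \leq (2\|\bar u\|_\X + r)r$, as required. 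The only delicate point is recognizing that the symmetrized form produced by Lemma~\ref{lem:trick_derivative} yields exactly the $n_l n_m/\tn^2$ combination needed to cancel the Laplacian weight uniformly.
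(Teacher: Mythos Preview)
Your proof is correct and follows essentially the same route as the paper's: the velocity bound via the non-expansiveness of $M$ on $\X$, and the pressure bound via the Poisson formula, Lemma~\ref{lem:trick_derivative} to rewrite the nonlinearity as $\sum_{l,m} n_l n_m [u^{(m)}\ast u^{(l)}]$, the uniform bound $|n_l n_m|/\tn^2 \leq 1$, and the Banach-algebra property of $\ell^1_\eta$. You even supply a bit more detail than the paper does (the explicit reason $(|n_1|+|n_2|+|n_3|)/\tn^2\leq 1$ for integer $\tn$, and the AM--GM justification for the pressure factor), which is fine.
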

\begin{remark}
As explained in Remark~\ref{rem:comparison_of_norms}, these weighted $\ell^1$-norms control the $\CC^0$-norms of the errors (explicitly). Notice also that, even though we used $\eta=1$ to validate the vorticity in Theorem~\ref{thm:result1} and Theorem~\ref{thm:result2}, by continuity of the estimates with respect to $\eta$ we get \emph{for free} a validation for some $\tilde\eta>1$ (see the proof of Theorem~\ref{thm:result1}), and thus Lemma~\ref{lem:error_estimates} is directly applicable.
\end{remark}
\begin{proof}
The error estimate for the velocity simply follows from the definition of $M$:
\begin{align*}
\left\Vert u - \bar u \right\Vert_\XX &= \sum_{m=1}^3 \left\Vert \left(M(\omega-\bw)\right)^{(m)}\right\Vert_{\ell^1_\eta} \\
&\leq  \sum_{m=1}^3 \left\Vert \left(\omega-\bw\right)^{(m)}\right\Vert_{\ell^1_\eta} \\
& = \left\Vert W - \bar W \right\Vert_\XX \\
&\leq r.
\end{align*}
To obtain the error estimate for the pressure, we use the fact that $(u,p)$ are smooth solutions of Navier-Stokes equations (see Lemma~\ref{lem:eq_NS_vorticity}), and thus~\eqref{eq:Poisson_p} holds. In Fourier space, this reduces to
\begin{equation*}
p_n = 
\left\{\begin{aligned}
& 0 \quad & \text{if } n=(\tn,n_4)\in\Z^3\times\Z,\ \tn= 0, \\
& -\frac{1}{\tn^2}\sum_{l=1}^3  n_l \left(\left[\left(u \star \tD\right) u\right]^{(l)}_n +i f^{(l)}_n\right) \quad &  \text{if }  n=(\tn,n_4)\in\Z^3\times\Z,\ \tn\neq 0.
\end{aligned}\right.
\end{equation*}
Since both $u$ and $\bu$ are divergence-free, by Lemma~\ref{lem:trick_derivative} we can write, for all $\tn\neq 0$,
\begin{equation*}
p_n = -\frac{1}{\tn^2}\sum_{l=1}^3\sum_{m=1}^3 n_l n_m \left[u^{(m)} \ast u^{(l)}\right]_n \quad\text{and}\quad \bar p_n = -\frac{1}{\tn^2}\sum_{l=1}^3\sum_{m=1}^3 n_l n_m \left[\bu^{(m)} \ast \bu^{(l)}\right]_n.
\end{equation*}
We then estimate
\begin{align*}
\left\Vert p - \bar p \right\Vert_{\ell^1_\eta} &\leq \sum_{l=1}^3\sum_{m=1}^3 \sum_{\tn\neq 0} \frac{\vert n_l\vert \vert n_m\vert}{\tn^2} \left(\left[\vert (u-\bu)^{(m)} \vert  \ast \vert u^{(l)}\vert \right]_n + \left[\vert\bu^{(m)}\vert \ast \vert(u-\bu)^{(l)}\vert \right]_n\right)  \eta^{\left\vert n\right\vert_1} \\
&\leq  \sum_{l=1}^3\sum_{m=1}^3 \sum_{\tn\neq 0} \left(\left[\vert (u-\bu)^{(m)} \vert  \ast \vert u^{(l)}\vert \right]_n + \left[\vert\bu^{(m)}\vert \ast \vert(u-\bu)^{(l)}\vert \right]_n\right)  \eta^{\left\vert n\right\vert_1} \\
&\leq  \sum_{l=1}^3\sum_{m=1}^3  \left(\left\Vert (u-\bu)^{(m)} \right\Vert_{\ell^1_\eta}  \left\Vert u^{(l)}\right\Vert_{\ell^1_\eta} + \left\Vert \bu^{(m)}\right\Vert_{\ell^1_\eta} \left\Vert(u-\bu)^{(l)}\right\Vert_{\ell^1_\eta}\right),
\end{align*}
where $\left\vert \cdot\right\vert$ applied to a sequence must be understood component-wise, i.e. $\vert u^{(l)}\vert$ is the sequence whose $n$-th element is equal to $\vert u^{(l)}_n\vert$. Finally, we obtain
\begin{align*}
\left\Vert p - \bar p \right\Vert_{\ell^1_\eta} &\leq \sum_{m=1}^3 \left\Vert (u-\bu)^{(m)} \right\Vert_{\ell^1_\eta} \left(\sum_{l=1}^3 \left\Vert u^{(l)}\right\Vert_{\ell^1_\eta} + \sum_{l=1}^3 \left\Vert \bu^{(l)}\right\Vert_{\ell^1_\eta} \right) \\
&\leq \sum_{m=1}^3 \left\Vert (u-\bu)^{(m)} \right\Vert_{\ell^1_\eta} \left(\sum_{l=1}^3 2\left\Vert \bu^{(l)}\right\Vert_{\ell^1_\eta} + \sum_{l=1}^3 \left\Vert (u-\bu)^{(l)}\right\Vert_{\ell^1_\eta} \right) \\
&\leq \sum_{m=1}^3 \left\Vert (u-\bu)^{(m)} \right\Vert_{\ell^1_\eta} \left(2\left\Vert \bar u\right\Vert_{\X} + r\right) \\
&\leq \left(2\left\Vert \bar u \right\Vert_\X + r\right)r. \qedhere
\end{align*}
\end{proof}

\bibliographystyle{abbrv}
\bibliography{NS_2D}

\begin{thebibliography}{10}

\bibitem{AriKoc10}
G.~Arioli and H.~Koch.
\newblock Integration of dissipative partial differential equations: a case
  study.
\newblock {\em SIAM Journal on Applied Dynamical Systems}, 9(3):1119--1133,
  2010.

\bibitem{AriKocTer05}
G.~Arioli, H.~Koch, and S.~Terracini.
\newblock Two novel methods and multi-mode periodic solutions for the
  {F}ermi-{P}asta-{U}lam model.
\newblock {\em Commun. Math. Phys.}, 255(1):1--19, 2005.

\bibitem{Burgers1948}
J.~M. Burgers.
\newblock A mathematical model illustrating the theory of turbulence.
\newblock {\em Adv. Appl. Mech.}, 1:171--199, 1948.

\bibitem{CalRap97}
G.~Caloz and J.~Rappaz.
\newblock Numerical analysis for nonlinear and bifurcation problems.
\newblock {\em Handbook of numerical analysis}, 5:487--637, 1997.

\bibitem{CasGamLes18}
R.~Castelli, M.~Gameiro, and J.-P. Lessard.
\newblock {Rigorous numerics for ill-posed PDEs: periodic orbits in the
  Boussinesq equation}.
\newblock {\em Archive for Rational Mechanics and Analysis}, 228(1):129--157,
  2018.

\bibitem{DayLesMis07}
S.~Day, J.-P. Lessard, and K.~Mischaikow.
\newblock {Validated continuation for equilibria of PDEs}.
\newblock {\em SIAM J. Numer. Anal.}, 45(4):1398--1424, 2007.

\bibitem{Dombre1986}
T.~Dombre, U.~Frisch, J.~M. Green, M.~H\'enon, A.~Mehr, and A.~M. Soward.
\newblock Chaotic streamlines in the {ABC} flows.
\newblock {\em J. Fluid Mech.}, 167:353--391, 1986.

\bibitem{Farwig2010}
R.~Farwig and T.~Okabe.
\newblock Periodic solutions of the {N}avier-{S}tokes equation with
  inhomogeneous boundary conditions.
\newblock {\em Ann. Univ. Ferrara Sez. VII Sci. Math.}, 56:249--281, 2010.

\bibitem{FigLla17}
J.-L. Figueras and R.~de~la Llave.
\newblock {Numerical Computations and Computer Assisted Proofs of Periodic
  Orbits of the Kuramoto--Sivashinsky Equation}.
\newblock {\em SIAM Journal on Applied Dynamical Systems}, 16(2):834--852,
  2017.

\bibitem{GamLes17}
M.~Gameiro and J.-P. Lessard.
\newblock {A Posteriori Verification of Invariant Objects of Evolution
  Equations: Periodic Orbits in the Kuramoto--Sivashinsky PDE}.
\newblock {\em SIAM Journal on Applied Dynamical Systems}, 16(1):687--728,
  2017.

\bibitem{gomez_survez}
J.~G\'{o}mez-Serrano.
\newblock Computer-assisted proofs in {PDE}: a survey.
\newblock Preprint, 2018.

\bibitem{kepler}
T.~C. Hales.
\newblock A proof of the {K}epler conjecture.
\newblock {\em Ann. of Math. (2)}, 162(3):1065--1185, 2005.

\bibitem{Heywood1999}
J.~G. Heywood, W.~Nagata, and W.~Xie.
\newblock A numerical based existence theorem for the {N}avier-{S}tokes
  equation.
\newblock {\em J. Math. Fluid Mech.}, 1:5--23, 1999.

\bibitem{Hsia2017}
C.-H. Hsia, C.-Y. Jung, T.~B. Nguyen, and M.-C. Shiue.
\newblock On time periodic solutions, asymptotic stability and bifurcations of
  {N}avier-{S}tokes equations.
\newblock {\em Numer. Math.}, 135:607--638, 2017.

\bibitem{Kaniel1967}
S.~Kaniel and M.~Shinbrot.
\newblock A reproductive property of {N}avier-{S}tokes equations.
\newblock {\em Arch. Ration. Mech. An.}, 24(5):363, 1967.

\bibitem{Kato1997}
H.~Kato.
\newblock Existence of periodic solutions of the {N}avier-{S}tokes equations.
\newblock {\em J. Math. Anal. Appl.}, 208(1):141--157, 1997.

\bibitem{Kim2009}
M.~Kim, M.~T. Nakao, Y.~Watanabe, and T.~Nishida.
\newblock A numerical verification method of bifurcating solutions for
  3-dimensional {R}ayleigh-{B}\'enard problems.
\newblock {\em Numer. Math.}, 111:389--406, 2009.

\bibitem{MR1420838}
H.~Koch, A.~Schenkel, and P.~Wittwer.
\newblock Computer-assisted proofs in analysis and programming in logic: a case
  study.
\newblock {\em SIAM Rev.}, 38(4):565--604, 1996.

\bibitem{Kovasznay1949}
L.~Kovasznay.
\newblock Hot-wire investigation of the wake behind cylinders at low {R}eynolds
  numbers.
\newblock {\em Proc. Roy. Soc. Lond. Ser. A - Math. Phys.}, 198(1053):174--190,
  1949.

\bibitem{Kozono1996}
H.~Kozono and M.~Nakao.
\newblock Periodic solutions of the {N}avier-{S}tokes equations in unbounded
  domains.
\newblock {\em Tohoku Math. J.}, 48(1):33--50, 1996.

\bibitem{kuznetsov2013elements}
Y.~A. Kuznetsov.
\newblock {\em Elements of applied bifurcation theory}, volume 112.
\newblock Springer Science \& Business Media, 2013.

\bibitem{feigenbaum}
O.~E. Lanford, III.
\newblock A computer-assisted proof of the {F}eigenbaum conjectures.
\newblock {\em Bull. Amer. Math. Soc. (N.S.)}, 6(3):427--434, 1982.

\bibitem{Maremonti1991}
P.~Maremonti.
\newblock Existence and stability of time-periodic solutions to the
  {N}avier-{S}tokes equations in the whole space.
\newblock {\em Nonlinearity}, 4(2):503--529, 1991.

\bibitem{jay_konstantin_survey}
K.~Mischaikow and J.~D. Mireles~James.
\newblock {\em Encyclopedia of Applied and Computational Mathematics}, chapter
  {Computational Proofs in Dynamics}.
\newblock Springer, 2015.

\bibitem{MR1849323}
M.~T. Nakao.
\newblock Numerical verification methods for solutions of ordinary and partial
  differential equations.
\newblock {\em Numer. Funct. Anal. Optim.}, 22(3-4):321--356, 2001.

\bibitem{Plu01}
M.~Plum.
\newblock Computer-assisted enclosure methods for elliptic differential
  equations.
\newblock {\em Linear Algebra Appl.}, 324(1-3):147--187, 2001.

\bibitem{fourcolor}
N.~Robertson, D.~Sanders, P.~Seymour, and R.~Thomas.
\newblock The four-colour theorem.
\newblock {\em J. Combin. Theory Ser. B}, 70(1):2--44, 1997.

\bibitem{Intlab}
S.~M. Rump.
\newblock Intlab—interval laboratory.
\newblock In {\em Developments in reliable computing}, pages 77--104. Springer,
  1999.

\bibitem{MR2652784}
S.~M. Rump.
\newblock Verification methods: rigorous results using floating-point
  arithmetic.
\newblock {\em Acta Numer.}, 19:287--449, 2010.

\bibitem{Sanchez2016}
J.~{S\'anchez Umbr\'ia} and M.~Net.
\newblock Numerical continuation methods for large--scale dissipative dynamical
  systems.
\newblock {\em Eur. Phys. J.-Spec. Top.}, 225:2465--2486, 2016.

\bibitem{Serrin1959b}
J.~Serrin.
\newblock A note on the existence of periodic solutions of the
  {N}avier-{S}tokes equations.
\newblock {\em Arch. Ration. Mech. An.}, 3(2):120--122, 1959.

\bibitem{Serrin1959a}
J.~Serrin.
\newblock On the stability of viscous fluid motions.
\newblock {\em Arch. Ration. Mech. An.}, 3(1):1--13, 1959.

\bibitem{SippJacquin}
D.~Sipp and L.~Jacquin.
\newblock Elliptic instability in two-dimensional flattened {T}aylor-{G}reen
  vortices.
\newblock {\em Phys. Fluids}, 10(4):839--849, 1998.

\bibitem{Sutera1993}
S.~Sutera and R.~Skalak.
\newblock The history of {P}oiseuille law.
\newblock {\em Ann. Rev. Fluid Mech.}, 25:1--19, 1993.

\bibitem{MR0264254}
A.~Takeshita.
\newblock On the reproductive property of the {$2$}-dimensional
  {N}avier-{S}tokes equations.
\newblock {\em J. Fac. Sci. Univ. Tokyo Sect. I}, 16:297--311 (1970), 1969.

\bibitem{Taylor1923}
G.~Taylor.
\newblock Stability of a viscous liquid contained between two rotating
  cylinders.
\newblock {\em Proc. Roy. Soc. Lond. Ser. A}, 102(718):541--542, 1923.

\bibitem{Taylor1937}
G.~Taylor and A.~Green.
\newblock Mechanism of the production of small eddies from large ones.
\newblock {\em Proc. Roy. Soc. Lond. Ser. A - Math. Phys.},
  158(A895):0499--0521, 1937.

\bibitem{MR725967}
Y.~Teramoto.
\newblock On the stability of periodic solutions of the {N}avier-{S}tokes
  equations in a noncylindrical domain.
\newblock {\em Hiroshima Math. J.}, 13(3):607--625, 1983.

\bibitem{lorenz}
W.~Tucker.
\newblock A rigorous {ODE} {S}olver and {S}male's 14th {P}roblem.
\newblock {\em Foundations of Computational Mathematics}, 2(1):53--117--117,
  2002-12-21.

\bibitem{MR2807595}
W.~Tucker.
\newblock {\em Validated numerics}.
\newblock Princeton University Press, Princeton, NJ, 2011.
\newblock A short introduction to rigorous computations.

\bibitem{navierstokescode}
J.~B. van~den Berg, M.~Breden, J.-P. Lessard, and L.~van Veen.
\newblock {MATLAB} code for ``{S}pontaneous periodic orbits in the
  {N}avier-{S}tokes flow'', 2019.
\newblock \verb+https://www.math.vu.nl/~janbouwe/code/navierstokes/+.

\bibitem{MR3444942}
J.~B. van~den Berg and J.-P. Lessard.
\newblock Rigorous numerics in dynamics.
\newblock {\em Notices Amer. Math. Soc.}, 62(9):1057--1061, 2015.

\bibitem{JBJFsymmetry}
J.~B. van~den Berg and J.~Williams.
\newblock Rigorously computing symmetric stationary states of the
  {O}hta-{K}awasaki problem in three dimensions.
\newblock To appear in SIAM J. Math.\ Anal., 2019.

\bibitem{Watanabe2009}
Y.~Watanabe.
\newblock A computer-assisted proof for the {K}olmogorov flows of
  incompressible viscous fluid.
\newblock {\em J. Comp. Appl. Math.}, 223:953--966, 2009.

\bibitem{Watanabe2016}
Y.~Watanabe.
\newblock An efficient verification method for the {K}olmogorov problem of
  incompressible fluid.
\newblock {\em J. Comp. Appl. Math.}, 302:157--170, 2016.

\bibitem{Watanabe1999}
Y.~Watanabe, N.~Yamamoto, and M.~T. Nakao.
\newblock A numerical verification method of solutions for the
  {N}avier-{S}tokes equations.
\newblock {\em Reliab. Comput.}, 5:347--357, 1999.

\bibitem{Yamamoto1998}
N.~Yamamoto.
\newblock A numerical verification method for solutions of boundary value
  problems with local uniqueness by banach's fixed-point theorem.
\newblock {\em SIAM J. Numer. Anal.}, 35(5):2004--2013, 1998.

\bibitem{Zgl04}
P.~Zgliczynski.
\newblock {Rigorous numerics for dissipative partial differential equations II.
  Periodic orbit for the Kuramoto--Sivashinsky PDE—a computer-assisted
  proof}.
\newblock {\em Foundations of Computational Mathematics}, 4(2):157--185, 2004.

\end{thebibliography}

\end{document}